\documentclass[11pt, a4paper]{amsart}
\usepackage{amssymb,latexsym,amsmath, graphicx} 
\usepackage{enumerate,enumitem,color}
\usepackage{mathrsfs} 
\usepackage{calligra}
\usepackage[T1]{fontenc}
\usepackage{hyperref}
\usepackage{cleveref}

\usepackage{tikz}
\usepackage{tikz-cd}
\hypersetup{
    colorlinks=true,   
    linkcolor=blue,    
    citecolor=red, 
    urlcolor=magenta   
}

\input{xypic}
\xyoption{all}

\title[Arithmetic of intertwining operators]{An arithmetic property of intertwining operators for real reductive groups}
\author[Raghuram \ \& \  Zou]{A.\,Raghuram \ \ \& \ \  Xin Zou}

\date{\today}      
\subjclass[2020]{22E45; 11F67, 11F70, 22E10, 22E41, 22E50, 20G05}

\address{A.~Raghuram, Department of Mathematics, Fordham University at Lincoln Center,\newline New~York, NY 10023, USA.} 

\email{araghuram@fordham.edu}

\address{Xin Zou, Department of Mathematics, Johns Hopkins University, Baltimore, MD 21218, USA.} 

\email{zxin3@jhu.edu}

\numberwithin{equation}{section}   
\newtheorem{lemma}[equation]{Lemma}
\newtheorem{thm}[equation]{Theorem}
\newtheorem{prop}[equation]{Proposition}
\newtheorem{cor}[equation]{Corollary}

\theoremstyle{definition}
\newtheorem{exam}[equation]{Example} 
\newtheorem{rem}[equation]{Remark}
\newtheorem{defn}[equation]{Definition}

\usepackage{bm}
\usepackage{upgreek}
\makeatletter
\newcommand{\bfgreek}[1]{\bm{\@nameuse{up#1}}}
\makeatother

\let\oldtocsection=\tocsection
\let\oldtocsubsection=\tocsubsection
\let\oldtocsubsubsection=\tocsubsubsection
\renewcommand{\tocsection}[2]{\hspace{0em}\oldtocsection{#1}{#2}}
\renewcommand{\tocsubsection}[2]{\hspace{1em}\oldtocsubsection{#1}{#2}}
\renewcommand{\tocsubsubsection}[2]{\hspace{2em}\oldtocsubsubsection{#1}{#2}}

\begin{document}

\begin{abstract}
Let $G$ be the group of complex points of a connected split reductive group defined over ${\mathbb Q}$. 
We study the standard intertwining operator between representations parabolically induced from essentially tempered cohomological representations of Levi subgroups of maximal parabolic subgroups. Under certain algebraicity, purity, and criticality hypotheses the operator is holomorphic and invertible at a particular point of evaluation. The relative Lie algebra cohomology of these induced representations is one-dimensional in the lowest nonzero degree. Under prescribed rational generators of these one-dimensional spaces, we show that the intertwining operator is a nonzero rational multiple of a certain product of ratios of local $L$-factors. This is a cohomological version of Langlands's calculation of the Gindikin-Karplevic formula. The main theorem provides a local input for the arithmetic study of special values of Langlands--Shahidi $L$-functions.
\end{abstract} 

\maketitle

 \def\R{\mathbb{R}}
\def\C{\mathbb{C}}
\def\Z{\mathbb{Z}}
\def\Q{\mathbb{Q}}
\def\A{\mathbb{A}}
\def\F{\mathbb{F}}
 \def\J{\mathbb{J}}
\newcommand\D{\mathbb{ D}}
 \def\L{\mathbb{L}}
\def\G{\mathbb{G}}
\def\N{\mathbb{N}}
\def\BH{\mathbb{H}}
\newcommand\Qi{\mathbb{Q}(i)}

\newcommand{\vless}{\rotatebox[origin=c]{-90}{$<$}}
\newcommand{\vgreat}{\rotatebox[origin=c]{90}{$<$}}
\newcommand{\vgreater}{\rotatebox[origin=c]{90}{$\leq$}}

\newcommand\Dm{\D_\lambda}
\newcommand\Dmp{\D_{\lambda^\prime}}
 \newcommand\Dim{\D_{\io\lambda}}
 \newcommand\Dimp{\D_{\io\lambda^\prime}}
\newcommand\Dum{\D_{\ul{\lambda}}}
\newcommand\Dium{\D_{\io\ul{\lambda}}} 
\newcommand\DumN{\D_{\ul{\lambda}-N\gamma_P}} 
\newcommand\DiumN{\D_{\io\ul{\lambda}-N\gamma_P}} 
\newcommand\cal{\mathcal}
\newcommand\SMK{{\cal S}^M_{K^M_f}}
\newcommand\tMZl{\tM_{\lambda,\Z}} 
\newcommand\Gm{{\mathbb G}_m}
\newcommand\cA{\cal A}
\newcommand\cC{\cal C}
\newcommand\calL{\cal L}
\newcommand\cO{\cal O}
\newcommand\cU{\cal U}
\newcommand\cK{\cal K}   
\newcommand\cW{\cal W}     
\newcommand\HH{{\cal H}}
\newcommand\cF{\mathcal{F}} 
\newcommand\cG{\mathcal{G}}
\newcommand\cB{\mathcal{B}}
\newcommand\cT{\mathcal{T}}
\newcommand\cS{\mathcal{S}}
\newcommand\cP{\mathcal{P}}
\newcommand\cZ{\mathcal{Z}}
\newcommand\cM{\mathcal{M}}
\newcommand\cV{\mathcal{V}}
\newcommand\caL{\mathcal{L}}
\newcommand\cSL{\mathcal{SL}}
\newcommand\cGL{\mathcal{GL}}

\newcommand\GL{{ \rm  GL}}
\newcommand\U{{ \rm  U}}
\def\SU{{\rm SU}}
\def\S{{\bf S}}
\newcommand\GSp{{\rm GSp}}
\newcommand\GSpin{{\rm GSpin}}
\newcommand\Lie {{ \rm Lie}} 
\newcommand\Spin{{ \rm  Spin}}
\newcommand\SL{{ \rm  SL}}
\newcommand\SO {{ \rm  SO}}
\newcommand{\Sp}{\mathrm{Sp}}
\newcommand\Ad{\text{Ad}}
\newcommand\AI{\text{AI}}
\newcommand\Sym{\text{Sym}}
\newcommand{\dd}{\mathrm{d}}
\def\ringO{\mathcal{O}}
\def\idealP{\mathfrak{P}} 
 \def\g{\mathfrak{g}}
\def\k{\mathfrak{k}}
\def\t{\mathfrak{t}}
\def\z{\mathfrak{z}}
\def\a{\mathfrak{a}}
\def\c{\mathfrak{c}}
\def\n{\mathfrak{n}}
\def\t{\mathfrak{t}}
\def\p{\mathfrak{p}}
\def\l{\mathfrak{l}}
\def\gl{\mathfrak{gl}}
\def\sl{\mathfrak{sl}}
\def\u{\mathfrak{u}} 
\def\m{\mathfrak{m}}
\def\fp{\mathfrak{p}} 
\def\p{\mathfrak{p}}   
\def\r{\mathfrak{r}}
\def\fd{\mathfrak{d}}
\def\fR{\mathfrak{R}}
\def\fI{\mathfrak{I}}
\def\fJ{\mathfrak{J}}
\def\i{\mathfrak{i}}
\def\perm{\mathfrak{S}}
\newcommand\fg{\mathfrak g}
\newcommand\fk{\mathfrak k}
\newcommand\fgK{(\mathfrak{g},K_\infty^0)}
\newcommand\gK{ \mathfrak{g},K_\infty^0 }

\newcommand\ul{\underline} 
\newcommand\tp{{  {\pi}_f}}
\newcommand\tv{ {\pi}_v}
\newcommand\ts{{ {\sigma}_f}}
\newcommand\pts{ {\sigma}^\prime_f}
\newcommand\usf{\ul{\sigma}_f}
\newcommand\pusf{\ul{\sigma}^\prime_f}
\newcommand\usvp{\ul{\sigma}^\prime_v}
\newcommand\usv{\ul{\sigma}_v}
\newcommand\io{{}^\iota}
\newcommand\uls{{\underline{\sigma}}}
\newcommand\ulb{\underline{\lambda}}
\newcommand\ual{\underline{\alpha}}
\newcommand\ulw{\underline{w}}
\newcommand\ulm{\underline{\mu}} 
\newcommand\uln{\underline{\nu}}
\newcommand\ubs{\underline{\s}}
\newcommand\ule{\underline{\eta}}
\newcommand{\trivial}{1\!\!1}
\newcommand\Spec{\hbox{\rm Spec}} 
\newcommand\SGK{\mathcal{S}^G_{K_f}}
\newcommand\SMP{\mathcal{S}^{M_P}}
 \newcommand\SGn{\mathcal{S}^{G_n}}
 \newcommand\SGp{\mathcal{S}^{G_{n^\prime}}}
\newcommand\SMPK{\mathcal{S}^{M_P}_{K_f^{M_P}}}
\newcommand\SMQ{\mathcal{S}^{M_Q}}
\newcommand\SMp{\mathcal{S}^{M }_{K_f^M}}
\newcommand\SMq{\mathcal{S}^{M^\prime}_{K_f^{M^\prime}}}
\newcommand\uSMP{\ul{\mathcal{S}}^{M_P}}
\newcommand\SGnK{\mathcal{S}^{G_n}_{K_f}}
\newcommand\SG{\mathcal{S}^G}
\newcommand\SGKp{\mathcal{S}^G_{K^\prime_f}}
\newcommand\piKK{{ \pi_{K_f^\prime,K_f}}}
\newcommand\piKKpkt{\pi^{\pkt}_{K_f^\prime,K_f}}
\newcommand\BSC{ \bar{\mathcal{S}}^G_{K_f}}
\newcommand\PBSC{\partial\SGK}
\newcommand\pBSC{\partial\SG}
\newcommand\PPBSC{\partial_P\SGK}
\newcommand\PQBSC{\partial_Q\SGK}
\newcommand\ppBSC{\partial_P\mathcal{S}^G}
\newcommand\pqBSC{\partial_Q\mathcal{S}^G}
\newcommand\prBSC{\partial_R\mathcal{S}^G}
\newcommand\bs{\backslash}
 \newcommand\tr{\hbox{\rm tr}}
 \newcommand\ord{\text{ord}}
\newcommand\Tr{\hbox{\rm Tr}}
\newcommand\HK{\mathcal{H}^G_{K_f}}
\newcommand\HKS{\mathcal{H}^G_{K_f,\place}}
\newcommand\HKv{\mathcal{H}^G_{K_v}}
\newcommand\HGS{\mathcal{H}^{G,\place}}
\newcommand\HKp{\mathcal{H}^G_{K_p}}
\newcommand\HKpo{\mathcal{H}^G_{K_p^0}}
\newcommand\ch{{\bf ch}}

\newcommand\M{\mathcal{M}}
\newcommand\Ml{\M_\lambda}
\newcommand\tMl{\tilde{\Ml}}
\newcommand\tM{\widetilde{\mathcal{M}}}
\newcommand\tMZ{\tM_\Z}
\newcommand\tsigma{\ul{\sigma}}
\newcommand\pkt{\bullet}
\newcommand\tH{\widetilde{\mathcal{H}}}
\newcommand\Mot{{\bf M}} 
\newcommand\eff{{\rm eff}}
\newcommand\Aql{A_{\q}(\lambda)}
\newcommand\wl{w\cdot\lambda}
\newcommand\wlp{w^\prime\cdot\lambda} 

\def\w{{\sf w}} 
\def\s{{\sf s}}
\def\e{{\bf e}} 
\def\x{{\tt x}}
\def\y{{\tt y}}
\def\v{{\sf v}}
\def\q{{\sf q}} 
\def\ff{{\bf f}}
\def\bk{{\bf k}}
 
\def\Ext{{\rm Ext}}
\def\Aut{{\rm Aut}}
\def\Hom{{\rm Hom}}
\newcommand\Ind{\mathrm{Ind}}
\def\aInd{{}^{\rm a}\Ind}
\def\aIndPG{\aInd_{\pi_0(P(\R)) \times P(\A_f)}^{\pi_0(G(\R)) \times G(\A_f)}}
\def\aIndQG{\aInd_{\pi_0(Q(\R)) \times Q(\A_f)}^{\pi_0(G(\R)) \times G(\A_f)}}
\def\Gal{{\rm Gal}}
\def\End{{\rm End}} 
\def\cm{{\rm cm}} 
\newcommand\Coh{{\rm Coh}}  
\newcommand\Eis{{\rm Eis}}
\newcommand\Res{\mathrm{Res}}
\newcommand\place{\mathsf{S}}
\newcommand\emb{\mathcal{I}} 
\newcommand\LB{\mathcal{L}}  
\def\Hod{{\mathcal{H}od}}
\def\Crit{{\rm Crit}}
  
 \newcommand\ip{\pi_f\circ \iota} 
\newcommand\Wp{W_{\pi_\infty\times \ip}}
\newcommand\Wpc{W^{\text{cusp}}_{\pi_\infty\times \pi_f\circ\iota}}
\newcommand\Lcusp{  L^2_{\text{cusp}}(G(\Q)\bs G(\A_f)/K_f)}
\newcommand\MiC{\tM_{\iota\circ\lambda,\C} }
\newcommand\miC{\M_{\iota\circ\lambda,\C} }
\newcommand\tpl{{}^\iota}
\newcommand\Id{\rm Id}
\newcommand\Lr{L^{\text{rat}}}
\newcommand\iso{ \buildrel \sim \over\longrightarrow}
\newcommand\us{\ul\sigma}
\newcommand\qvs{q_v^{-z}}
\newcommand\into{\hookrightarrow}
\newcommand\ppfeil[1]{\buildrel #1\over \longrightarrow}
\newcommand\eb{{}^\iota}

\newcommand\ulr{\underline{\rho}}
\def\bfpi{\mathbf{\Pi}}
\def\bfdelta{\mathbf{\Delta}}

\def\sI{\mathscr{I}}
\def\sU{\mathscr{U}}
\def\sJ{\mathscr{J}}
\newcommand\cocha[1]{\mathrm{X}_*(#1)}
\newcommand\cha[1]{\mathrm{X}^*(#1)}

\def\bfG{\mathbf{G}}
\def\bfB{\mathbf{B}}
\def\bfT{\mathbf{T}}
\def\bfU{\mathbf{U}}
\def\bfP{\mathbf{P}}
\def\bfM{\mathbf{M}}
\def\bfN{\mathbf{N}}
\def\bfA{\mathbf{A}}

{\Small
\tableofcontents
}

\section{\bf Introduction}
\label{sec:intro}

A fundamental problem in the arithmetic theory of automorphic forms is to understand the algebraicity of special values of automorphic 
$L$-functions. Eisenstein cohomology provides a powerful method for approaching this problem, as in 
\cite{harder-inventiones}, \cite{harder-raghuram}, \cite{raghuram-imaginary}, \cite{bhagwat-raghuram}, 
\cite{farid}, and \cite{muthu-raghuram}. 
In this method, a key local input at the archimedean places is an arithmeticity statement for the map in cohomology induced by the 
normalized standard intertwining operator between induced representations. This article establishes that local archimedean input in the 
general framework of the Langlands–Shahidi machinery \cite{shahidi-book}.
The results proved here will be used in the forthcoming work \cite{raghuram-book}. While the motivating problem is global, the present paper is purely 
local and archimedean. The corresponding nonarchimedean local question is treated in \cite{raghuram-cjm}. 
We work with complex reductive groups, which are the relevant archimedean groups, for example, when the base field in the global setting is a 
CM number field. Henceforth, all considerations are local.

\medskip

Let $\mathrm G_0$ denote a connected reductive algebraic group over $\C$, and 
put $\mathrm G=\operatorname{Res}_{\C/\R}\mathrm G_0$. Write $G:=\mathrm{G}(\R) = \mathrm{G}_0(\C)$, this is a complex reductive 
group thought of as a real reductive group. We follow a similar notational scheme for subgroups of $\mathrm G_0$. 
Fix a choice of Borel subgroup $\mathrm{B}_0$ of $\mathrm{G}_0$; write $\mathrm{B}_0 = \mathrm{T}_0\mathrm{U}_0,$ where $\mathrm{T}_0$ 
is a maximal torus, and $\mathrm{U}_0$ the unipotent radical of $\mathrm{B}_0$. Suppose $\mathrm{P}_0$ is a maximal parabolic subgroup 
of $\mathrm{G}_0$, assumed to be standard, i.e., containing $\mathrm{B}_0$, and with Levi decomposition 
$\mathrm{P}_0 = \mathrm{L}_0\mathrm{N}_0.$ Let $\mathrm{A}_0$ denote the maximal central torus of $\mathrm{L}_0$. 
The $\C$-points of $\mathrm{G}_0, \mathrm{B}_0, \mathrm{T}_0, \mathrm{U}_0, \mathrm{P}_0, \mathrm{L}_0,$ 
$\mathrm{N}_0,$ and $\mathrm{A}_0$ are denoted by $G, B, T, U, P, L, N$, and $A$, respectively. 
To emphasize the dependence on $P$, we also denote $L = L_P$, $N = N_P,$ etc. Let $\tau$ be an irreducible admissible representation 
of $L_P$. (In the applications we have in mind, $\tau$ will be a local archimedean component of a global cuspidal automorphic 
representation of cohomological type.) Let $I_P^G(\tau, s)$ be the induced representation as in the Langlands--Shahidi theory 
\cite{shahidi-book}; the precise definition is recalled in \eqref{eq:parabolic-central-deformation}; for the introduction suffice it to say that it is 
obtained by normalised parabolic induction from $P$ to $G$ of $\tau$ with the complex variable $s$ introduced in a delicate manner. 
Let $\Delta_0$ be the set of simple roots of $\mathrm{G}_0$ with respect to $\mathrm{B}_0$ and $\alpha_{\mathrm{P}_0}$ the unique 
simple root corresponding to $\mathrm{P}_0$, and $w_P$ the unique element in the Weyl group of $G_0$ such that 
$w_P(\Delta_0 \setminus \{\alpha_{\mathrm{P}_0}\}) \subset {\Delta_0}$ and $w_0(\alpha_{\mathrm{P}_0}) < 0$; 
we also denote by $w_P$ a representative element in $\mathrm{G}_0$. Let $\mathrm{Q}_0$ be the parabolic subgroup of 
$\mathrm{G}_0$ associate to $\mathrm{P}_0$, and ${}^{w_P}\pi$ the corresponding representation of $M_{\mathrm{Q}}$. We denote 
$$
T_{\rm st}(s) : I_P^G(\tau, s) \to I_Q^G({}^{w_P}\tau, -s)
$$ 
for the standard intertwining operator (see \eqref{eq:inter_op} and \eqref{eqn:standard_inter_op}). 
There is a choice of measure implicit in the integral that defines the intertwining operator. 
{\it The main purpose of this article is to study the map induced in cohomology by the standard intertwining operator.}

\medskip

To put ourselves in an arithmetically interesting situation, assume that $\tau$ is tempered and cohomological with respect to some finite-dimensional 
irreducible representation $V_{\underline\mu}$ of $L_P$ with highest weight $\underline{\mu}$. 
\Cref{lem:purity} says that this highest weight satisfies a purity condition. Also, $\tau$ has nontrivial cohomology in a certain degree $q_0(L)$; 
see \eqref{eqn:q-0-L}. Denote by $s_0$ the {\it point of evaluation} (see \Cref{def:point_of_evaluation}), which is the point such that 
$$I_P^G(\tau, s)|_{s=s_0} \ = \ \aInd_P^G(\tau),$$ 
where the right hand side is the algebraic (un-normalized) parabolic induction of $\tau$ to a representation of $G$. 
In \Cref{prop:principal-series} one sees that $\aInd_P^G(\tau)$ is a principal series representation, i.e., induced from the Borel subgroup. 
The combinatorial lemma (see \Cref{lem:combinatorial_lemma}) says that if $s_0$ satisfies certain a combinatorial condition which we 
call \textit{hyper-critical}, then $\aInd_P^G(\tau)$ has nontrivial cohomology in degree $q_0(G) = q_0(L_P)+\tfrac12\dim_{\R}N_P$ 
(see \eqref{eq:complex-bottom-degree}) with respect to the finite-dimensional irreducible representation $V_{\underline\lambda}$ of $G$ with 
highest weight $\underline{\lambda}$ prescribed in the lemma. The cohomology group in degree $q_0(G)$ is one-dimensional. Furthermore, 
$I_Q^G({}^{w_0}\tau, -s_0)$ also has nontrivial cohomology in degree $q_0(G)$ with respect to the same $V_{\underline\lambda}$ and the 
cohomology group is one-dimensional. Under the conditions of \Cref{lem:combinatorial_lemma},  it is shown in \Cref{sec:irreducibility-principal-series} 
using Speh--Vogan \cite{speh-vogan} that $\aInd_P^G(\tau)$ is irreducible. Hence,  
$$
T_{\rm st}(s_0) : I_P^G(\tau, s_0) \to I_Q^G({}^{w_0}\tau, -s_0)
$$ 
is an isomorphism between two irreducibly induced representations and these representations have one-dimensional relative Lie algebra cohomology group in degree $q_0(G)$ with respect to $V_{\underline\lambda}$. We fix a generator $[J_\tau]$ for the one-dimensional relative Lie algebra cohomology in degree $q_0(G)$ of $I_P^G(\tau, s_0) \otimes V_{\underline\lambda}$ by the construction in \Cref{subsec:main-first-Delorme}. Similarly, fix $[J_{\tilde{\tau}}]$ for $I_Q^G({}^{w_0}\tau, -s_0) \otimes V_{\underline\lambda}.$ Under the principal-series identifications of \eqref{eq:mt3-endpoints}, these are the generators $[J_\chi]$ and $[J_{\tilde\chi}]$ of Theorem~\ref{thm:mainthm}, respectively. The isomorphism $T_{\rm st}(s_0)$ induces an isomorphism $T_{\rm st}^\bullet(s_0)$ in cohomology. Hence, for cohomology in degree $q_0(G)$, one has
$$
T_{\rm st}^\bullet(s_0)([J_\tau]) \ = \ c \cdot [J_{\tilde{\tau}}], \quad \mbox{for some $c \in \C^*$}.
$$
The main theorem of this article shows that this scalar $c$ is related to critical values of relevant Langlands--Shahidi $L$-functions. 

\medskip

To describe this more precisely, consider the Langlands dual groups. Suppose $\widehat{\mathrm G}_0$ is the complex reductive group 
dual to $G_0$. Let $\widehat P_0=\widehat{\mathrm L}_{P_0}\widehat{\mathrm N}_{P_0}$ be the standard parabolic of 
$\widehat{\mathrm G}_0$ corresponding to $P_0$. The Lie algebra $\widehat{\mathfrak n}_{P_0}$ of unipotent radical 
$\widehat{\mathrm N}_{P_0}$ of $\widehat P_0$ decomposes, under the adjoint action of the Levi subgroup 
$\widehat{\mathrm L}_{P_0}$ of $\widehat P_0$, into a multiplicity free direct sum $\oplus_{j=1}^m r_j$. Given $\tau$ as above, i.e.,
 an irreducible admissible essentially tempered cohomological representation of $L_P$, and an irreducible finite-dimensional representation 
 $r_j$ of $\widehat{\mathrm L}_{P_0}$, the local aspects of the Langlands--Shahidi machinery (see \Cref{subsec:LS_L-functions}) 
 attach a local $L$-factor $L_\C(s, \tau, r_j)$. The main result of this article, see \Cref{thm:mainthm}, says that 
\begin{equation}
\label{eqn:main-intro}
T_{\rm st}^\bullet(s_0)([J_\tau]) \ \approx_{\Q^\times} \  
\left(\prod_{j=1}^m
       \frac{L_{\mathbb C}(js_0,\tau,r_j)}
            {L_{\mathbb C}(1+js_0,\tau,r_j)}\right)
\cdot [J_{\tilde{\tau}}],
\end{equation}
where $\approx_{\Q^\times}$ means equality up to some nonzero rational number. 
The hyper-criticality mentioned in the hypothesis of the combinatorial lemma means that for each 
$1 \leq j \leq m$, the point $js_0$ has the right half-integrality property (determined by $L_P$ and $r_j$), and the $L$-values 
$L_{\mathbb C}(js_0,\tau,r_j)$ and $L_{\mathbb C}(1+js_0,\tau,r_j)$ appearing in the above product of ratios are critical in the 
sense of Definition~\ref{def:critical_set}. In particular, the displayed $L$-values are nonzero and finite.  

\medskip

The proof follows the general strategy in the Langlands--Shahidi machinery of appealing to Langlands's lemma 
(see, for example, \cite[Lemma 4.2.1]{shahidi-book}) to factorize the intertwining operator $T_{\rm st}(s_0)$ as a composition 
$T_{\alpha_1} \circ \cdots \circ T_{\alpha_d}$
of rank-one operators; see \Cref{sec:rank-one-int-op}; here the $\alpha_j$'s are simple roots and $d = \tfrac12\dim_{\R}N_P.$  
In cohomology, one has a corresponding composition $T_{\rm st}^\bullet(s_0) = T^\bullet_{\alpha_1} \circ \cdots \circ T^\bullet_{\alpha_d}$. 
Each rank-one operator $T_{\alpha_j}$ is the induction to $G$ of an intertwining operator $T_H(\psi_{\alpha_j})$ for a copy of $H = \SL_2(\C)$ 
determined by the simple root $\alpha_j$. The somewhat delicate \Cref{lem:main-first-Kostant-transitivity} shows that 
$T^\bullet_{\alpha_j}$ and $T_H^\bullet(\psi_{\alpha_j})$ are compatible with respect to the chosen rational generators (the appropriate $[J_\chi]$'s) 
up to some some unspecified 
nonzero rational number which explains the $\approx_{\Q^\times}$ appearing in \eqref{eqn:main-intro}. Piecing together all the $\SL_2(\C)$ contributions 
(see \Cref{prop:rk1-scalar} and specially \eqref{eq:main-first-root-line-formula}) gives the proof of \Cref{thm:mainthm}.

\medskip

Let us end this introduction by mentioning G\"unter Harder's hope, based on his \cite{harder-tifr} and especially Zagier's appendix \cite{zagier-tifr} of that paper, 
that the unspecified nonzero rational number appearing in the main theorem has to have a simple form.
This hope is motivated by the study of congruences, together with the construction of 
$p$-adic functions that interpolate products of ratios of critical values of global 
$L$-functions.

\bigskip

\section{\bf Preliminaries}
\subsection{Basic notations}
\label{sec:nota}
We now work locally at an archimedean place. Throughout, let $\mathrm G_0$ denote a split connected reductive algebraic group over $\C$, and put $\mathrm G=\operatorname{Res}_{\C/\R}\mathrm G_0$. Write $G:=\mathrm{G}(\R)$ and $G_\C:=\mathrm{G}(\C)$ for its real and complex points. Denote the derived subgroup of $G$ by $\mathrm G^1$, and put $G^1=\mathrm G^1(\R)$.

For example, if $\mathrm{G}_0=\GL_n$, then $\mathrm{G}_0^1=\SL_n$, $G=\GL_n(\C)$, and $G_{\C}\simeq\GL_n(\C)\times\GL_n(\C)$. We use analogous notation for other subgroups of $\mathrm{G}$.

We can and do fix a closed algebraic embedding $\mathrm G_0\hookrightarrow \mathrm{GL}_{m/\C}$ once and for all \cite[Theorem 8.6]{humphreys}. The group $G=\mathrm G_0(\C)$ is a real reductive group \cite[Chapter~VII, Section~2, Examples~2 and~4]{knapp3}. Let $\Gamma=\mathrm{Gal}(\C/\R)=\{1,\sigma\}$. In product coordinates, the inclusion of real points into complex points is
\begin{equation}\label{eq:real-complex-embedding}
 \iota_{\sigma}:
 (\Res_{\C/\R}\mathrm{GL}_{m/\C})(\R)
 \hookrightarrow(\Res_{\C/\R}\mathrm{GL}_{m/\C})(\C),
 \qquad g\longmapsto(g,\bar g).
\end{equation}
The nontrivial Galois element acts on the complex points by $\sigma(g_1,g_2)=(\bar g_2,\bar g_1)$. Thus
\begin{equation}\label{eq:Galois}
 (\Res_{\C/\R}\mathrm{GL}_{m/\C})(\R)
 =\big((\Res_{\C/\R}\mathrm{GL}_{m/\C})(\C)\big)^\sigma
 =\{(g,\bar g):g\in\GL_m(\C)\}\simeq\GL_m(\C).
\end{equation}
Together with \eqref{eq:real-complex-embedding}, the chosen embedding of $\mathrm G_0$ gives the commutative diagram

\[\begin{tikzcd}
\mathrm{G}_0(\C) \arrow[r,"\sim "] \arrow[d,hook]& G\arrow[r, hook]\arrow[d, hook]
  & G_{\C} \arrow[d,hook] \\
\GL_m(\C) \arrow[r,"= "]&\GL_m(\C) \arrow[r,hook]
  & \GL_m(\C)\times \GL_m(\C) 
\end{tikzcd}
\]
where the composition of the maps in the first row is
\begin{equation}\label{eq:embedding}\iota_\sigma: g\mapsto (g, \bar{g}) \in \GL_m(\C)\times \GL_m(\C), \quad \text{for}\  g\in \mathrm{G}_0(\C)\subset \GL_m(\C).\end{equation}
We use the global Cartan involution $\Theta$ of $\GL_m(\C)$ \cite{knapp3} given by
\begin{equation}\label{eq:cartan_involution}
 g\longmapsto{}^t\bar g^{-1}.
\end{equation}
Its complexification acts on $\GL_m(\C)\times\GL_m(\C)$ by $\Theta(g_1,g_2)=({}^tg_2^{-1},{}^tg_1^{-1})$. We choose the embedding so that $G_\C$ is stable under this involution. Then $\Theta$ restricts to a Cartan involution of $G$, which is a real reductive group in the sense of \cite[2.1.1]{wallach}.

Let $\mathrm Z$ be the algebraic center of $\mathrm G$, and write $C=G^\Theta$ for the maximal compact subgroup fixed by the chosen Cartan involution. Put $K=CZ$, where $Z=\mathrm{Z}(\R)$.

For $\mathrm G_0=\GL_n$, we have
\[ C_{\mathbb{C}}=\{(g_1, g_2)| \ g_1={}^{t}g_2^{-1}\}\cong \GL_n(\mathbb{C}),\quad C=\{(g, \bar{g})|\  g={}^{t}\bar{g}^{-1}\}\cong U(n)),\]
where $U(n)$ is the usual compact unitary subgroup of $GL_n(\C)$. And
\[
 \begin{aligned}
 K_{\C}&=\{(g_1,g_2)\in\GL_n(\C)\times \GL_n(\C) :
                g_1{}^tg_2=cI_n,\ c\in\C^\times\},\\
 K&=\{(g,\bar g) \in \GL_n(\C)\times \GL_n(\C):
                g{}^t\bar g=cI_n,\ c\in\R_{>0}\}.
 \end{aligned}
\]

For a maximal $\Theta$-stable torus $\mathrm{T}_0$ of $\mathrm{G}_0$, its restriction of scalars $\mathrm{T}=\Res_{\C/\R}\mathrm{T}_0$ is $\Theta$-stable, maximally split, and maximally compact in $\mathrm{G}$.
\vspace{1em}

\noindent\textbf{\large Characters and cocharacters.}
Write $\cha{T_0}$, $\cha{T}$, $\cha{T_{\C}}$ for the algebraic character groups over their base fields, and $X_*( T_0)$, $X_*(T)$, $X_*(T_\C)$ for the corresponding cocharacter groups. Restriction of scalars gives
\begin{equation}
  \label{eq:rational_characters}
\cha{T}\cong \cha{T_0}, \quad \cha{T_\C}\cong \cha{T_0}\oplus \cha{T_0}.
\end{equation}
Set
\[
 \Lambda=X^*(\mathrm T_0),\qquad
 \Lambda^\vee=X_*(\mathrm T_0),\qquad
 \Lambda_{\mathbb F}=\Lambda\otimes_{\mathbb Z}\mathbb F
 \quad(\mathbb F=\mathbb Q,\mathbb R,\mathbb C).
\]
We use Greek letters $\alpha, \lambda, \mu$ (resp., $\alpha^{\vee}, \lambda^{\vee}, \mu^{\vee}$) for elements of $\Lambda_\C$ (resp., $\Lambda_{\C}^\vee$). Underlined notation, such as $\ulb=(\lambda, \lambda')$, denotes an element of $\Lambda_\C \oplus \Lambda_\C$. The intrinsic perfect pairing is determined by
\[
 \lambda\circ\mu(z)=z^{\langle\lambda,\mu\rangle},
 \qquad \lambda\in\Lambda,\quad\mu\in\Lambda^\vee,
\]
and on $\Lambda\oplus\Lambda$ and $\Lambda^\vee\oplus\Lambda^\vee$ it is
\begin{equation}\label{eq:intrinsic-paired-pairing}
\langle \ulb, \ulm\rangle=  \langle(\lambda,\lambda'),(\mu,\mu')\rangle
 =\langle\lambda,\mu\rangle+\langle\lambda',\mu'\rangle.
\end{equation}

Through \eqref{eq:rational_characters}, $\lambda$ and $\ulb$ also denote elements of $\cha{T}_{\C}$ and $\cha{T_\C}_{\C}$, respectively. By \eqref{eq:embedding}, the real torus $T\subset T_\C$ is \(\left\{ (t, \bar{t})\in T_\C: t\in T_0\right\}.\) A character $\underline\lambda=(\lambda,\lambda')$ of $\mathrm T_{\C}$ therefore restricts by
\begin{equation}\label{eq:geometric-character-restriction}
 (\underline\lambda|_T)(t)
   =\underline\lambda(t,\bar t)
   =\lambda(t)\overline{\lambda'(t)}.
\end{equation}
This gives a complex-valued character of the real Lie group $T$, which need not be defined over $\mathbb R$. The Galois element $\sigma$ acts on $\cha{T_\C}$ by\(({}^\sigma \underline\lambda)(x) =\overline{\underline\lambda(\sigma^{-1}x)}\). Therefore \({}^{\sigma} \ulb={}^\sigma (\lambda,\lambda')=(\lambda',\lambda)\), and $\cha{T}_\C$ is identified with
\begin{equation}\label{eq:diag_embedding}
\left( \Lambda_\C\oplus \Lambda_\C\right)^{\sigma}
  =\{(\lambda,\lambda):\lambda\in\Lambda_\C\}.
\end{equation}
\begin{exam}[Characters of a complex torus]
\label{ex:torus-characters}
If $\mathrm T_0=\mathbb G_{m/\C}$ and $\mathrm T=\operatorname{Res}_{\C/\R}\mathrm T_0$, then $T=\C^\times$, $T_\C\simeq(\C^\times)^2$, and $\Lambda=\Z$. An algebraic character $(p,q)\in\Z^2$ of $\mathrm T_{\C}$ restricts to $z\mapsto z^p\bar z^q$. A real rational character has $p=q=k\in\Z$ and evaluates as $|z|_\C^k$, where $|z|_\C=z\bar z$, in agreement with \eqref{eq:geometric-character-restriction}.

More generally, write $\operatorname{Hom}_{\mathrm{cont}}(T,\C^\times)$ for the continuous quasi-characters of the Lie group $T$. For $T=\C^\times$, every such character is uniquely of the form
\[
 \chi_{c,m}(z)=|z|^c\left(\frac{z}{|z|}\right)^m,
 \qquad c\in\C,\quad m\in\Z,
\]
where $|z|$ is the ordinary modulus. The corresponding exponents are $p=(c+m)/2$ and $q=(c-m)/2$, so $p-q\in\Z$. It is algebraic exactly when $p,q\in\Z$.

For a torus of complex rank $n$, choose coordinates $T\simeq(\C^\times)^n$ and $T_{\C}\simeq(\C^\times)^n\times(\C^\times)^n$. In the latter coordinates real points have the form $(t,\bar t)$. For $k\in\Z$, $ke_i$ denotes the character of $\mathrm T_0$ given by $(z_1,\ldots,z_n)\mapsto z_i^k$.
\end{exam}

\subsection{Root systems and Weyl groups}
Let $\mathrm A$ be the maximal $\R$-split subtorus of $\mathrm T$, put $A=\mathrm A(\R)$ and $A^\circ=\mathrm A(\R)^\circ$, and let $\mathrm T_c$ be the compact real subtorus with $T_c=\mathrm T_c(\R)=T\cap C$. Then
\[
 T=T_cA^\circ,\qquad T_c\cap A^\circ=\{1\}.
\]
Write $\mathfrak g_0=\operatorname{Lie}_{\R}G$ and $\mathfrak g=\mathfrak g_0\otimes_{\R}\C$, and use analogous notation for $T$, $T_c$, and $A^\circ$.

The coordinates of \Cref{ex:torus-characters} identify $X_*(\mathrm A)$ with $\Lambda^\vee$. For an integral cocharacter $\mu^\vee\in\Lambda^\vee$, the curve $t\mapsto\mu^\vee(e^t)$ lies in $A^\circ$. Its tangent vector at the identity is
\[
 d\mu^\vee(1)=\left.\frac{d}{dt}\right|_{t=0}\mu^\vee(e^t).
\]
A cocharacter basis supplies split coordinates and a tangent basis of $\mathfrak a_0$. Differentiation therefore gives
\begin{equation}\label{eq:split-Lie-identification}
 \Lambda^\vee_{\R}\xrightarrow{\ \sim\ }\mathfrak a_0,
 \qquad \mu^\vee\otimes r\longmapsto r\,d\mu^\vee(1).
\end{equation}
Dually, the intrinsic pairing identifies $\mathfrak a_0^*$ with $\Lambda_{\R}$ by character differentiation. For $\lambda\in\Lambda$, restrict the character of $\mathrm T_0$ to $A^\circ$ and put
\begin{equation}\label{eq:differential}
 d\lambda(H)=\left.\frac{d}{dt}\right|_{t=0}
                 \lambda(\exp(tH)),\qquad H\in\mathfrak a_0.
\end{equation}
Indeed, differentiating $\lambda(\mu^\vee(e^t))=e^{t\langle\lambda,\mu^\vee\rangle}$ gives
\begin{equation}\label{eq:differentiated-intrinsic-pairing}
 d\lambda\bigl(d\mu^\vee(1)\bigr)
      =\langle\lambda,\mu^\vee\rangle.
\end{equation}
Thus $\lambda\mapsto d\lambda$ identifies $\Lambda_{\R}$ with $\mathfrak a_0^*$. Extending this identification, \eqref{eq:split-Lie-identification}, and the pairing complex-linearly gives
\[
 \mathfrak a\simeq\Lambda^\vee_{\C},\qquad
 \mathfrak a^*\simeq\Lambda_{\C}.
\]
We henceforth identify weights with their differentials and cocharacters with their tangent vectors, extended linearly.

The tangent vectors in \eqref{eq:split-Lie-identification} also give $\operatorname{Lie}_{\C}(\mathrm T_0)\simeq\Lambda^\vee_{\C}$. The underlying real Lie algebra is $\mathfrak t_0$. Let $\mathcal J$ denote multiplication by $i$ on this complex Lie algebra. The compact and split tangent spaces are then related by
\[
 \mathfrak t_{c,0}=\mathcal J\mathfrak a_0,\qquad
 \mathfrak t_0=\mathfrak a_0\oplus\mathcal J\mathfrak a_0.
\]
Differentiating \eqref{eq:embedding} on $T$ and extending complex-linearly gives
\begin{equation}\label{eq:torus-Lie-complexification}
 \begin{aligned}
 \mathfrak t=\mathfrak t_0\otimes_{\R}\C
   &\xrightarrow{\ \sim\ }
          \Lambda^\vee_{\C}\oplus\Lambda^\vee_{\C},\\
 H+\mathcal JH'&\longmapsto(H+iH',H-iH'),
       \qquad H,H'\in\mathfrak a_0.
 \end{aligned}
\end{equation}
Here $i$ on the right is scalar multiplication in the complexified Lie algebra. The subalgebras $\mathfrak a$ and $\mathfrak t_c$ are the diagonal and antidiagonal subspaces, respectively. Dualizing gives
\[
 \mathfrak t^*\simeq\Lambda_{\C}\oplus\Lambda_{\C},
\]
with the pairing given by \eqref{eq:intrinsic-paired-pairing}.

For a smooth character $\chi$ with differential $d\chi=\underline\nu=(\nu,\nu')$, \eqref{eq:torus-Lie-complexification} gives
\begin{equation}\label{eq:paired-character-differential}
 d\chi(H+\mathcal JH')
   =\langle\nu+\nu',H\rangle+i\langle\nu-\nu',H'\rangle,
 \qquad H,H'\in\mathfrak a_0.
\end{equation}
At the group level, \eqref{eq:geometric-character-restriction} gives, for an algebraic character $\underline\lambda=(\lambda,\lambda')$ of $\mathrm T_{\C}$,
\begin{equation}\label{eq:split-compact-evaluation}
 (\underline\lambda|_T)(au)
    =(\lambda+\lambda')(a)(\lambda-\lambda')(u),
 \qquad a\in A^\circ,\quad u\in T_c.
\end{equation}
Thus the character restriction maps are
\begin{equation}\label{eq:pre-resA}
 \operatorname{res}_A(\lambda,\lambda')=\lambda+\lambda',
 \qquad
 \operatorname{res}_c(\lambda,\lambda')=\lambda-\lambda',
\end{equation}
with targets $X^*(\mathrm A)$ and $X^*((\mathrm T_c)_{\C})$, respectively, both identified with $\Lambda$. Their dual cocharacter maps are $\mu\mapsto(\mu,\mu)$ and $\mu\mapsto(\mu,-\mu)$, so
\begin{equation}\label{eq:restriction-cocharacter-duality}
 \begin{aligned}
 \langle\operatorname{res}_A\underline\lambda,\mu\rangle
   &=\langle\underline\lambda,(\mu,\mu)\rangle,\\
 \langle\operatorname{res}_c\underline\lambda,\mu\rangle
   &=\langle\underline\lambda,(\mu,-\mu)\rangle.
 \end{aligned}
\end{equation}
By \eqref{eq:torus-Lie-complexification}, the compact tangent $\mathcal J\mu$ has coordinates $(i\mu,-i\mu)$, explaining the factor $i$ in \eqref{eq:paired-character-differential}. As in \cref{ex:torus-characters}, a single-valued smooth character satisfies $\nu-\nu'\in\Lambda$, whereas $\nu+\nu'$ may be arbitrary in $\Lambda_{\C}$. For an $\R$-rational character, the identification \eqref{eq:diag_embedding} gives
\begin{equation}\label{eq:norm-split-restriction}
 \operatorname{res}_A(\lambda,\lambda)=2\lambda,\qquad
 \operatorname{res}_c(\lambda,\lambda)=0.
\end{equation}
Thus the differential on $\mathfrak a_0$ of the $\R$-rational character corresponding to $\lambda$ is $2\lambda$.
\vspace{1em}

\noindent\textbf{\large Restricted root system of $G$.}
Let $\Phi_0=\Phi(\mathrm G_0,\mathrm T_0)\subset\Lambda$ denote the set of roots of $G_0$ with respect to $T_0$. Differentiating the adjoint action gives the restricted root spaces
\[
 \mathfrak g_{0,\alpha}
   =\{X\in\mathfrak g_0:
       [H,X]=\langle\alpha,H\rangle X
       \text{ for every }H\in\mathfrak a_0\}.
\]
The nonzero restricted weights are $\Phi_{\mathrm{res}}(G)=\Phi_0$, and each has multiplicity $\dim_{\R}\mathfrak g_{0,\alpha}=2$. Transport the coroot $\alpha^\vee\in\Lambda^\vee=X_*(\mathrm A)$ from the root datum of $(\mathrm G_0,\mathrm T_0)$. The root--coroot normalization and the corresponding Weyl reflection are
\begin{equation}\label{eq:root-datum-reflection}
 \langle\alpha,\alpha^\vee\rangle=2,\qquad
 s_\alpha(\lambda)
   =\lambda-\langle\lambda,\alpha^\vee\rangle\alpha.
\end{equation}
The restricted Weyl group is
\begin{equation}\label{eq:res_weyl}
 W_{\mathrm{res}}(G)=N_C(A)/Z_C(A)
   \simeq W(\mathrm G_0,\mathrm T_0)=:W_0.
\end{equation}
The root lattice is $\Z\Phi_0\subset\Lambda$. Its span omits central weight directions when the center has positive dimension.
\vspace{1em}

\noindent\textbf{\large Absolute root system of $G$.}
The absolute roots, namely the nonzero weights of $\mathrm T_{\C}$ on $\mathfrak g$, form two copies of $\Phi_0$:
\begin{equation}\label{eq:absolute_roots}
 \Phi_{\mathrm{abs}}(G)
   =\{\alpha^{(1)}=(\alpha,0):\alpha\in\Phi_0\}
      \sqcup
     \{\alpha^{(2)}=(0,\alpha):\alpha\in\Phi_0\}.
\end{equation}
Their coroots are $(\alpha^\vee,0)$ and $(0,\alpha^\vee)$. Restriction to $\mathrm A$, as in \eqref{eq:pre-resA}, gives the map between root systems
\begin{equation}\label{eq:Res}
 \operatorname{res}_{\mathrm{rt}}:
 \Phi_{\mathrm{abs}}(G)\longrightarrow\Phi_{\mathrm{res}}(G),
 \qquad \alpha^{(1)},\alpha^{(2)}\longmapsto\alpha.
\end{equation}
On split coroots, the dual map \eqref{eq:restriction-cocharacter-duality} is $\alpha^\vee\mapsto(\alpha^\vee,\alpha^\vee)$. Each restricted root $\alpha$ has the two absolute lifts $\alpha^{(1)}$ and $\alpha^{(2)}$ in \eqref{eq:Res}. Their sum $(\alpha,\alpha)$ is Galois invariant and restricts to $2\alpha$ by \eqref{eq:diag_embedding} and \eqref{eq:norm-split-restriction}. It is not itself an absolute root, because it belongs to neither copy of $\Phi_0$ in \eqref{eq:absolute_roots}.

The Weyl groups are related by
\begin{equation}\label{eq:absolute-weyl-diagonal}
 W_{\mathrm{abs}}(G)=W_0\times W_0,\qquad
 \jmath:W_{\mathrm{res}}(G)\hookrightarrow W_{\mathrm{abs}}(G),
 \quad w\longmapsto(w,w).
\end{equation}
The restriction map $\operatorname{res}_A$ in \eqref{eq:pre-resA} is equivariant for this diagonal embedding:
\[
 \operatorname{res}_A(\jmath(w)(\underline\nu))
   =w(\operatorname{res}_A(\underline\nu)).
\]
\vspace{1em}

\noindent\textbf{\large The Cartan involution.}
On the torus, the involution \eqref{eq:cartan_involution} acts as $\Theta(t_1,t_2)=(t_2^{-1},t_1^{-1})$. Differentiating gives
\begin{equation}\label{eq:theta-paired-weights}
 \theta(\nu,\nu')=(-\nu',-\nu).
\end{equation}
This action is complex-linear. The nontrivial Galois element $\sigma$ acts on the character lattice by $(\lambda,\lambda')\mapsto(\lambda',\lambda)$, extended complex-linearly in \eqref{eq:diag_embedding}. In particular, $\theta\alpha^{(1)}=-\alpha^{(2)}$. Thus every absolute root $\beta$ is complex in the sense that $\theta\beta\ne\pm\beta$. Extending the split and compact restrictions \eqref{eq:pre-resA} by zero gives
\begin{equation}\label{eq:Cartan-zero-extended-projections}
 \operatorname{pr}_A\underline\nu
   =\frac{\underline\nu-\theta\underline\nu}{2}
     =\left(\frac{\nu+\nu'}2,\frac{\nu+\nu'}2\right),\quad 
 \operatorname{pr}_c\underline\nu
   =\frac{\underline\nu+\theta\underline\nu}{2}
     =\left(\frac{\nu-\nu'}2,-\frac{\nu-\nu'}2\right).
\end{equation}
The factors $1/2$ come from extension by zero. The restrictions in \eqref{eq:pre-resA} remain the sum and difference.
\vspace{1em}

Fix a positive definite $W_0$-invariant symmetric form $(\, ,\, )_0$ on $\Lambda_{\R}$ and extend it complex-bilinearly. On $\Lambda_{\C}\oplus\Lambda_{\C}$, define
\begin{equation}\label{eq:absolute-symmetric-form}
 \big((\nu_1,\nu'_1),(\nu_2,\nu'_2)\big)_{\mathrm{abs}}
   =2\big((\nu_1,\nu_2)_0+(\nu'_1,\nu'_2)_0\big).
\end{equation}
To explain this normalization, let $b$ be the dual form on $\mathfrak a_0$ and put
\begin{equation}\label{eq:B0-blocks}
 B_0(H+\mathcal JK,H'+\mathcal JK')
    =b(H,H')-b(K,K').
\end{equation}
By \eqref{eq:paired-character-differential}, the split and compact components of $(\nu,\nu')$ are $\nu+\nu'$ and $i(\nu-\nu')$. The complexified dual of $B_0$ is therefore
\[
 (\nu_1+\nu'_1,\nu_2+\nu'_2)_0
 -(i(\nu_1-\nu'_1),i(\nu_2-\nu'_2))_0,
\]
which is exactly \eqref{eq:absolute-symmetric-form}. The form is positive definite on $\Lambda_{\R}\oplus\Lambda_{\R}$, whose covectors are real on $\mathfrak a_0$ and purely imaginary on $\mathfrak t_{c,0}$. We take real and imaginary parts of weights relative to this real form. The extension of covectors on $\mathfrak a_0$ by zero on $\mathfrak t_{c,0}$, given by $\nu\mapsto(\nu/2,\nu/2)$ in \eqref{eq:Cartan-zero-extended-projections}, is isometric.

We now relate these symmetric forms to the intrinsic coroot pairings, as needed for the Speh--Vogan criterion in \Cref{sec:irreducibility-principal-series}. Recall the Weyl reflection $s_\alpha$ from \eqref{eq:root-datum-reflection}. Since $\langle\alpha,\alpha^\vee\rangle=2$, it sends $\alpha$ to $-\alpha$. The $W_0$-invariance of $(\, ,\, )_0$ therefore gives
\[
 (\nu,\alpha)_0
   =(s_\alpha(\nu),s_\alpha(\alpha))_0=-(\nu,\alpha)_0
       +\langle\nu,\alpha^\vee\rangle(\alpha,\alpha)_0.
\]
Rearranging yields the first identity below. Applying the same argument to the absolute roots with the form \eqref{eq:absolute-symmetric-form} gives the second:
\begin{equation}\label{eq:root-coroot-pairing-comparison}
 \langle\nu,\alpha^\vee\rangle
   =\frac{2(\nu,\alpha)_0}{(\alpha,\alpha)_0},\qquad
 \langle\underline\nu,\ual^\vee\rangle
   =\frac{2(\underline\nu,\ual)_{\mathrm{abs}}}
          {(\ual,\ual)_{\mathrm{abs}}},
\end{equation}
for $\alpha\in\Phi_0$ and $\ual\in\Phi_{\mathrm{abs}}(G)$. These formulas recover the intrinsic coroot pairings. Throughout, $\langle\, ,\,\rangle$ denotes the weight--cocharacter pairing, extended by differentiation and linearity.
\vspace{1em}

\noindent\textbf{\large Positive systems and half-sums.}
Fix a Borel subgroup $\mathrm B_0\supset\mathrm T_0$, which then gives the positive and simple roots $\Phi_0^+$ and $\Delta_0$. Use
\[
 \Phi_{\mathrm{res}}^+(G)=\Phi_0^+,\qquad
 \Phi_{\mathrm{abs}}^+(G)
   =(\Phi_0^+)^{(1)}\sqcup(\Phi_0^+)^{(2)}.
\]
By \eqref{eq:absolute-weyl-diagonal}, $\ell_{\mathrm{abs}}(w_1,w_2) =\ell_{\mathrm{res}}(w_1)+\ell_{\mathrm{res}}(w_2)$. If a $\theta$-stable positive system is needed, we explicitly use $(\Phi_0^+)^{(1)}\sqcup(\Phi_0^-)^{(2)}$ instead. Define the half-sums of positive roots by
\[
 \rho_G=\frac12\sum_{\alpha\in\Phi_0^+}\alpha,\qquad
 \underline\rho_G
   =\frac12\sum_{\beta\in\Phi_{\mathrm{abs}}^+(G)}\beta
   =(\rho_G,\rho_G).
\]
Since each restricted root has real multiplicity $2$, the half-sum of restricted roots counted with their multiplicities is
\[
 \frac12\sum_{\alpha\in\Phi_{\mathrm{res}}^+(G)}
       \dim_{\R}(\mathfrak g_{0,\alpha})\,\alpha
   =2\rho_G=\operatorname{res}_A\underline\rho_G.
\]
The same convention applies to half-sums of positive roots of Levi subgroups and of roots in unipotent radicals of parabolic subgroups. These half-sums are rational weights and need not be integral.

\subsection{Pinnings and rank-one homomorphisms}
\label{subsec:root-homomorphisms} Put $\mathrm B=\operatorname{Res}_{\C/\R}\mathrm B_0$ and $B=\mathrm B(\R)$, so $\Delta_{\mathrm{res}}(G)=\Delta_0$. Choose a pinning \((\mathrm T_0,\mathrm B_0,\{x_\alpha\}_{\alpha\in\Delta_0})\) compatible with the compact form $C$, where $x_\alpha:\mathbb G_a\longrightarrow\mathrm G_0$ parametrizes the simple-root subgroup. Extend these maps to compatible Chevalley parametrizations $x_\alpha$ for all $\alpha\in\Phi_0$. For each simple root $\alpha$, the pinning determines a root homomorphism \(\varphi_\alpha:\mathrm{SL}_2(\C)\longrightarrow G.\) For
\[
 a(z)=\begin{pmatrix}z&0\\0&z^{-1}\end{pmatrix},\qquad
 n(u)=\begin{pmatrix}1&u\\0&1\end{pmatrix},\qquad
 w_2=\begin{pmatrix}0&-1\\1&0\end{pmatrix},
\]
one has
\begin{equation}\label{eq:rk1-root-map}
 \begin{aligned}
 \varphi_\alpha(a(z))&=\alpha^\vee(z),&
 \varphi_\alpha(n(u))&=x_\alpha(u),\\
 \dot s_\alpha&=\varphi_\alpha(w_2),&
 \varphi_\alpha(\mathrm{SU}(2))&\subset C.
 \end{aligned}
\end{equation}
Its kernel is contained in $\{\pm I\}$ and is determined by $\varphi_\alpha(-I)=\alpha^\vee(-1)$ \cite[Proposition~20.32]{milne}. Thus the image of $\varphi_\alpha$ is $\mathrm{SL}_2(\C)$ or $\mathrm{PGL}_2(\C)$. Its restriction to each root subgroup of $\mathrm{SL}_2(\C)$ is an isomorphism onto the corresponding root subgroup of $G$. Transport the Haar measure
\begin{equation}\label{eq:rk1-measure}
 d\mu_{\C}(u)=2\,dx\,dy,\qquad u=x+iy,
\end{equation}
through $x_\alpha$ to each root subgroup. These choices fix the normalizations used in the intertwining integrals and rational bases.

\section{\bf Finite-dimensional representations and parabolic inductions}
\label{sec:representations}

\subsection{Highest-weight representations}
\label{subsec:highest_weight} Base change of $\mathrm B=\operatorname{Res}_{\C/\R}\mathrm B_0$ gives the Borel subgroup \(\mathrm B_{\C}=\mathrm B\times_{\R}\C\subset\mathrm G_{\C}, \mathrm T_{\C}\subset\mathrm B_{\C}\). In the product coordinates of \eqref{eq:embedding}, this is the product of $\mathrm B_0$ with its conjugate Borel subgroup~\cite[2.61]{milne}. Hence, in the product coordinates, its simple roots are
\[
 \Delta_{\mathrm{abs}}(G)
   =\{(\alpha,0):\alpha\in\Delta_0\}
       \sqcup\{(0,\alpha):\alpha\in\Delta_0\}.
\]
This choice of $\mathrm B_0$ determines the positive system for highest weights of $\mathrm G_{\C}$.

An \textit{integral weight} is an element $\underline\lambda=(\lambda,\lambda')\in X^*(\mathrm T_{\C})=\Lambda\oplus\Lambda$. It is \textit{dominant} with respect to $\mathrm B_{\C}$ if
\[
 \langle\lambda,\alpha^\vee\rangle\geq0,\qquad
 \langle\lambda',\alpha^\vee\rangle\geq0
       \qquad(\alpha\in\Delta_0).
\]
Equivalently, both components are dominant for $\mathrm B_0$. See \cite[Theorem~22.2]{milne} for the following:
\begin{prop}[Highest-weight modules]\label{prop:highest_weight}
Suppose $G$ is semisimple. The dominant integral weights $\underline\lambda\in X^*(\mathrm T_{\C})$ parametrize the irreducible finite-dimensional algebraic representations of $G$. We write $V_{\underline\lambda}$ for the representation of $\mathrm G_{\C}$ with highest weight $\underline\lambda$ relative to $\mathrm B_{\C}$, and for its restriction to $G$ along $\iota_\sigma:G\hookrightarrow G_{\C}$ in \eqref{eq:embedding}
\end{prop}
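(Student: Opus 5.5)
The plan is to reduce the statement to the classical theorem of the highest weight for the complex semisimple algebraic group $\mathrm G_{\C}$, and then to pass to $G$ by restriction along $\iota_\sigma$.

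\emph{Step 1: the algebraic group $\mathrm G_{\C}$.} Since $\mathrm G=\operatorname{Res}_{\C/\R}\mathrm G_0$, base change gives $\mathrm G_{\C}\simeq\mathrm G_0\times\mathrm G_0$ in the product coordinates of \eqref{eq:embedding}. Here $\mathrm T_{\C}\subset\mathrm B_{\C}$ is the product of $\mathrm T_0$ with itself, and $\mathrm B_{\C}$ is the product of $\mathrm B_0$ with its conjugate Borel. I will apply \cite[Theorem~22.2]{milne} to the split semisimple group $\mathrm G_{\C}$ with its split maximal torus $\mathrm T_{\C}$ and Borel $\mathrm B_{\C}$.

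The simple roots are $\Delta_{\mathrm{abs}}(G)$ and the corresponding coroots are $(\alpha^\vee,0)$ and $(0,\alpha^\vee)$. Hence dominance of $\underline\lambda=(\lambda,\lambda')$ is exactly the stated condition that both $\lambda$ and $\lambda'$ are dominant for $\mathrm B_0$. The cited theorem then gives a bijection $\underline\lambda\mapsto V_{\underline\lambda}$ between dominant integral weights and irreducible algebraic representations of $\mathrm G_{\C}$. Concretely, $V_{\underline\lambda}\simeq V_\lambda\boxtimes V_{\lambda'}$, as a tensor product over a product group.

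\emph{Step 2: restriction to $G$.} I will show that restriction along $\iota_\sigma$ identifies the algebraic representations of $\mathrm G_{\C}$ with the finite-dimensional algebraic representations of the real group $G=\mathrm G(\R)$. Here ``algebraic'' means coming from a representation of $\mathrm G$ defined over $\C$, that is, from a representation of $\mathrm G_{\C}$.

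The key point is that $G$ is Zariski dense in $\mathrm G_{\C}$. This holds because $\mathrm G$ is connected and the real points of a connected group over the infinite field $\R$ are dense. Density has two consequences:
\begin{enumerate}
\item restriction to $G$ is fully faithful, since a linear map commuting with $G$ commutes with its Zariski closure;
\item a subspace stable under $G$ is stable under $\mathrm G_{\C}$.
\end{enumerate}
It follows that irreducibility is preserved and that non-isomorphic $V_{\underline\lambda}$ remain non-isomorphic after restriction.

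Concretely, the restriction is $g\mapsto V_\lambda(g)\otimes\overline{V_{\lambda'}(g)}$. Its weights on $T$ are computed by \eqref{eq:geometric-character-restriction}.

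\emph{Main obstacle.} The delicate point is the precise meaning of ``algebraic representation of $G$''. A holomorphic-only viewpoint would see only $\lambda'=0$, and a merely continuous viewpoint requires an extra argument. I would first fix the definition as restriction of algebraic representations of $\mathrm G_{\C}$, in which case Step 2 is just the density argument.

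If instead the intended class is real-analytic finite-dimensional representations of the Lie group $G$, I would argue on the Lie algebra side. The complexified Lie algebra is $\mathfrak g\simeq\mathfrak g_0\otimes_{\R}\C\simeq\operatorname{Lie}\mathrm G_0\oplus\operatorname{Lie}\mathrm G_0$, so its irreducible modules are tensor products of two irreducible modules. Each factor integrates to the algebraic group provided its highest weight lies in $\Lambda$. Integrality then forces $(\lambda,\lambda')\in\Lambda\oplus\Lambda$, because the representation factors through $G$ itself rather than through a cover. This recovers the same parametrization.
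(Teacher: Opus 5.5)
Your Steps 1 and 2 are correct and follow the paper's approach. The paper's entire proof is the citation of \cite[Theorem~22.2]{milne} for the split semisimple group $\mathrm G_{\C}$ with $\mathrm T_{\C}\subset\mathrm B_{\C}$, where ``algebraic representation of $G$'' means the restriction along $\iota_\sigma$ of an algebraic representation of $\mathrm G_{\C}$. Your Zariski-density argument spells out why this restriction preserves irreducibility and non-isomorphism, which the paper leaves implicit.

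Your final paragraph, however, makes a false claim and should be removed. For finite-dimensional real-analytic representations of the Lie group $G$, one does \emph{not} recover the same parametrization. Such a representation is $g\mapsto\rho_\lambda(g)\otimes\overline{\rho_{\lambda'}(g)}$ on the simply connected cover $\widetilde G$, with $\lambda,\lambda'$ dominant and integral on coroots. It descends to $G$ exactly when the finite group $Z=\pi_1(G)$ acts trivially, i.e.\ when $\lambda(z)\overline{\lambda'(z)}=(\lambda-\lambda')(z)=1$ for all $z\in Z$; here we use $|\lambda'(z)|=1$. Factoring through $G$ therefore constrains only the product: it forces $\lambda-\lambda'\in\Lambda$, not $(\lambda,\lambda')\in\Lambda\oplus\Lambda$.

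For a concrete counterexample, take $\mathrm G_0=\mathrm{PGL}_2$ and the pair $(\varpi,\varpi)$. This gives the irreducible $4$-dimensional representation $X\mapsto gXg^{*}$ realizing $\mathrm{PGL}_2(\C)\cong\mathrm{SO}(3,1)^\circ$. It is not the restriction of any algebraic representation of $\mathrm G_{\C}$, since $\varpi\notin\Lambda$. This is the nonabelian version of the distinction in Example~\ref{ex:torus-characters} between $p-q\in\Z$ and $p,q\in\Z$. It is exactly why the word ``algebraic'', in the sense of your Step 2, cannot be relaxed in the statement.
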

\vspace{1em}

\noindent\textbf{\large The central character.}
For a general reductive $G$, put $\mathrm T^1=\mathrm T\cap\mathrm G^1$ and $T^1=T\cap G^1$, and let $\mathrm Z^\circ$ be the connected center of $\mathrm G$. The multiplication map $\mathrm Z^\circ\times\mathrm G^1\longrightarrow\mathrm G$ is a central isogeny \cite[Proposition~21.61]{milne}. Since the connected center acts by a character on an irreducible representation, the classification extends as follows.

\begin{cor}\label{cor:finite_dim_repn}
An irreducible finite-dimensional algebraic representation of $G$ is determined by a dominant weight $(\lambda,\lambda')\in X^*((\mathrm T^1)_{\C})$ and an algebraic central character $\ulw=(\mathsf w,\mathsf w')$, subject to agreement of the two actions on $(\mathrm Z^\circ\cap\mathrm G^1)_{\C}$. Equivalently, the derived and central weights form a pair in the image of
\[
 X^*(\mathrm T_{\C})\hookrightarrow
 X^*((\mathrm T^1)_{\C})\oplus
 X^*((\mathrm Z^\circ)_{\C}).
\]
Every such pair gives a unique irreducible representation.
\end{cor}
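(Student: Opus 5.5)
The plan is to reduce to Proposition~\ref{prop:highest_weight} through the central isogeny $m:\mathrm Z^\circ\times\mathrm G^1\to\mathrm G$ of \cite[Proposition~21.61]{milne}. Let $V$ be an irreducible finite-dimensional algebraic representation of $G$, i.e.\ of $\mathrm G_{\C}$ restricted along $\iota_\sigma$. Pull $V$ back along $m$ to obtain a representation of $(\mathrm Z^\circ)_{\C}\times(\mathrm G^1)_{\C}$. Since $m$ is surjective, this pullback is still irreducible. The factor $(\mathrm Z^\circ)_{\C}$ is central and $V$ is irreducible, so Schur's lemma makes it act by a single algebraic character $\ulw=(\mathsf w,\mathsf w')\in X^*((\mathrm Z^\circ)_{\C})$; here we use that the connected center is a torus and that algebraic representations of tori are diagonalizable. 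Then $V|_{(\mathrm G^1)_{\C}}$ is irreducible as well. Proposition~\ref{prop:highest_weight}, applied to the semisimple group $\mathrm G^1$ with Borel subgroup $\mathrm B\cap\mathrm G^1$ and maximal torus $\mathrm T^1$, shows that this restriction is $V_{(\lambda,\lambda')}$ for a unique dominant $(\lambda,\lambda')\in X^*((\mathrm T^1)_{\C})$. The two actions agree on the finite group scheme $\ker m\simeq \mathrm Z^\circ\cap\mathrm G^1$, embedded antidiagonally, because that kernel acts trivially on the pullback. This gives the compatibility condition in the statement.

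For uniqueness and existence, let a compatible pair $((\lambda,\lambda'),\ulw)$ be given. Form the irreducible representation $\ulw\boxtimes V_{(\lambda,\lambda')}$ of $(\mathrm Z^\circ)_{\C}\times(\mathrm G^1)_{\C}$. The compatibility condition says precisely that $\ker m$ acts trivially. Since $m$ is a quotient map of algebraic groups, the representation descends to an algebraic representation of $\mathrm G_{\C}$, and it stays irreducible. Two representations of $\mathrm G_{\C}$ with the same pair have isomorphic pullbacks, and pullback along the faithfully flat $m$ is fully faithful, so they are isomorphic. This establishes the bijection.

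For the reformulation, restrict characters along $\mathrm T^1\times\mathrm Z^\circ\to\mathrm T$; this is also an isogeny, since $\mathrm T=\mathrm Z^\circ\mathrm T^1$. The resulting map $X^*(\mathrm T_{\C})\to X^*((\mathrm T^1)_{\C})\oplus X^*((\mathrm Z^\circ)_{\C})$ is injective because the isogeny is surjective. Its image consists exactly of the pairs trivial on the kernel, which is $\mathrm T^1\cap\mathrm Z^\circ=\mathrm Z^\circ\cap\mathrm G^1$, since $\mathrm Z^\circ\subset\mathrm T$. So compatible pairs are exactly the image, and the highest weight $\ulb\in X^*(\mathrm T_{\C})$ of $V$ corresponds to $((\lambda,\lambda'),\ulw)$.

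The main obstacle is bookkeeping rather than anything deep. One has to check that $\ker m$ may be nonreduced or disconnected, so the descent argument should be done scheme-theoretically, or in characteristic zero via finite groups. One also has to work over $\C$ after base change, keeping track of the two product coordinates.
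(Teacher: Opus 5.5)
Your proposal is correct and follows the paper's own route: the paper deduces the corollary from the central isogeny $\mathrm Z^\circ\times\mathrm G^1\to\mathrm G$ of \cite[Proposition~21.61]{milne}, the fact that the connected center acts by a character on an irreducible representation, and \Cref{prop:highest_weight} for the semisimple part. You make explicit what the paper leaves implicit: the descent along $m$, uniqueness, and the identification of $X^*(\mathrm T_{\C})$ with the pairs agreeing on $\mathrm Z^\circ\cap\mathrm G^1$ (which uses that this central subgroup of $\mathrm G^1$ acts on $V_{(\lambda,\lambda')}$ through the character $(\lambda,\lambda')$); in characteristic zero your concern about a nonreduced kernel does not arise.
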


The inclusion in \Cref{cor:finite_dim_repn} has finite-index image and becomes an isomorphism over $\mathbb Q$. Extend the derived and central weights by zero in the complementary directions. Their sum gives the highest weight of the representation of $\mathrm G_{\C}$
\[
 \underline\lambda=(\lambda+\mathsf w,\lambda'+\mathsf w')
                 \in X^*(\mathrm T_{\C}).
\]
The summands may be rational, but their sum must be integral. We retain $V_{\underline\lambda}$ for the corresponding representation of $\mathrm G_{\C}$ and of $G$.

\subsection{Parabolic induction and the point of evaluation}
\label{subsec:parabolic_induction}
Let $\mathrm P_0=\mathrm L_{P,0}\mathrm N_{P,0}\supset\mathrm B_0$ be a standard parabolic subgroup of $\mathrm G_0$, with Levi subgroup $\mathrm L_{P,0}\supset\mathrm T_0$ and unipotent radical $\mathrm N_{P,0}$. By the convention of \Cref{sec:nota}, write $P=L_PN_P$ for the corresponding real parabolic subgroup. Let $\Phi(P)\subset\Lambda=X^*(\mathrm T_0)$ be the roots occurring in $\operatorname{Lie}_{\C}\mathrm N_{P,0}$, identified with $\Phi_{\rm res}^+(G)\setminus\Phi_{\rm res}^+(L_P)$. We shall denote the half-sum of these roots by
\begin{equation}\label{eq:parabolic-half-sum}
 \rho_P=\frac12\sum_{\alpha\in\Phi(P)}\alpha
       =\rho_G-\rho_{L_P}\in\Lambda_{\mathbb Q}.
\end{equation}
By restriction of scalars, each $\alpha\in\Phi(P)$ contributes the two absolute roots $(\alpha,0)$ and $(0,\alpha)$ in $\operatorname{Lie}_{\R}(N_P)\otimes_{\R}\C$, as in \eqref{eq:absolute_roots}. The half-sum of these absolute roots is
\[
 \underline\rho_P
   =\frac12\sum_{\alpha\in\Phi(P)}
       \big((\alpha,0)+(0,\alpha)\big)
   =(\rho_P,\rho_P).
\]
\vspace{1em}

\noindent\textbf{\large Langlands decomposition.}
Let $\mathrm A_P$ be the maximal $\R$-split subtorus of $Z(\mathrm L_P)^\circ$, and put $A_P=\mathrm A_P(\R)$, $\mathfrak a_{P,0}=\operatorname{Lie}_{\R}(A_P^\circ)$, and $\mathfrak a_P=\mathfrak a_{P,0}\otimes_{\R}\C$. Let $M_P$ be the common kernel of the absolute values of the $\R$-rational characters of $\mathrm L_P$. The Langlands decomposition is
\[
 L_P=M_P\times A_P^\circ,\qquad P=M_PA_P^\circ N_P.
\]
Thus the $M$ and $A$ in the usual notation $P=MAN$ are $M_P$ and $A_P^\circ$, respectively \cite[0.3.2--0.3.4]{borel-wallach}. The Harish--Chandra homomorphism $\mathcal H_P:L_P\to\mathfrak a_{P,0}$ is characterized by
\begin{equation}\label{eq:harish-chandra-map}
 e^{\langle d\eta,\mathcal H_P(l)\rangle}=|\eta(l)|,
 \qquad l\in L_P,
\end{equation}
for every $\R$-rational algebraic character $\eta:\mathrm L_P\to\mathbb G_{m/\R}$, where $d\eta$ is restricted to $\mathfrak a_{P,0}$ \cite[Section~3.3, p.~49]{shahidi-book}. Its kernel is $M_P$, and $\mathcal H_P(\exp X)=X$ for $X\in\mathfrak a_{P,0}$. When pairing weights with $\mathcal H_P(l)$, we likewise restrict them to $\mathfrak a_{P,0}$.
\vspace{1em}

\noindent\textbf{\large The modulus character.}
To fix the normalization of parabolic induction, the modulus character of $P$ is the Jacobian of $n\mapsto lnl^{-1}$, given by the positive character
\begin{equation}\label{eq:parabolic-modulus}
 \delta_P(l)=\left|\det_{\R}\left(
   \operatorname{Ad}(l)|_{\operatorname{Lie}_{\R}N_P}\right)\right|,
 \qquad l\in L_P,
\end{equation} and extended trivially across $N_P$. Restricting to $T$ and using \eqref{eq:parabolic-half-sum} and the restriction map in \eqref{eq:pre-resA} gives
\begin{equation}\label{eq:parabolic-modulus-differential}
 \delta_P^{1/2}(t)=\prod_{\alpha\in\Phi(P)}
                              |\alpha(t)|_{\C}^{1/2},\quad 
 d(\delta_P^{1/2}|_T)=\underline\rho_P,\qquad
 \operatorname{res}_A\underline\rho_P=2\rho_P.
\end{equation}
The weight $2\rho_P$ extends to the algebraic determinant character of $\mathrm L_{P,0}$ on $\operatorname{Lie}_{\C}\mathrm N_{P,0}$. Using $2\rho_P$ also for this character, we can write \(\delta_P(l)=\left|(2\rho_P)(l)\right|_{\C}\). Thus $\delta_P$ is shorthand for the positive character determined by $\rho_P$. In terms of the Harish--Chandra map,
\[
 \delta_P(l)^{1/2}
   =e^{\langle2\rho_P,\mathcal H_P(l)\rangle},
 \qquad l\in L_P.
\]
In particular, $\delta_P(ma)=\delta_P(a)$. The factor $2$ comes from applying the restriction map $\operatorname{res}_A$ in \eqref{eq:pre-resA} to $(\rho_P,\rho_P)$.
\vspace{1em}

\noindent\textbf{\large Normalized parabolic induction.}
Let $\tau$ be a smooth admissible Fr\'echet representation of  $L_P$ on $V_\tau$, and let $\xi:A_P^\circ\to\C^\times$ be a smooth character. We use the same notation for its extension to $P$, defined by $\xi(man)=\xi(a)$. Thus $\xi$ is trivial on $M_P$ and $N_P$. Its restriction to $L_P$ has the form
\[
 \xi(l)=e^{\langle v,\mathcal H_P(l)\rangle},
 \qquad l\in L_P,
\]
for a unique $v\in\mathfrak a_P^*$.

The normalized induced representation $\Ind_P^G(\tau,\xi)$ consists of the smooth functions $f:G\to V_\tau$ satisfying
\begin{equation}\label{eq:normalized-parabolic-induction}
 f(mang)=\delta_P(a)^{1/2}\xi(a)\tau(ma)f(g).
\end{equation}
Here $m\in M_P$, $a\in A_P^\circ$, $n\in N_P$, and $g\in G$. The group $G$ acts by right translation.

When $\tau$ is given as a representation of $M_P$, we extend it trivially to $A_P^\circ$. Then $\tau(ma)=\tau(m)$, giving the usual $MAN$ convention of \cite[III.3.2]{borel-wallach}. In our applications, $\tau$ is a fixed cohomological representation of $L_P$, including its prescribed central character. The character $\xi$ supplies an additional twist of its $A_P^\circ$-action.

We abbreviate $\Ind_P^G(\tau,\trivial)$ to $\Ind_P^G(\tau)$, so
\[
 \Ind_P^G(\tau,\xi)=\Ind_P^G(\tau\otimes\xi).
\]
Omitting $\delta_P^{1/2}$ defines unnormalized (algebraic) induction. The two conventions are related by
\[
 \begin{aligned}
 \Ind_P^G(\tau,\xi)
   &={}^a\Ind_P^G(\tau\otimes\xi\otimes\delta_P^{1/2}),\\
 {}^a\Ind_P^G(\tau)
   &=\Ind_P^G(\tau,\delta_P^{-1/2}).
 \end{aligned}
\]
When $P$ is the Borel subgroup $B$, we write $I(\chi)$ for the normalized induction of a character $\chi$ of $T$.
\vspace{1em}

\noindent\textbf{\large Maximal parabolics and the point of evaluation.}
Now suppose that $P=L_PN_P$ is maximal, with omitted simple root $\alpha_P$. To fix the direction and scale of the parameter, put
\begin{equation}\label{eq:parabolic-parameter-normalization}
 \gamma_P=\frac{\rho_P}{\langle\rho_P,\alpha_P^\vee\rangle}.
\end{equation}
This weight satisfies $\langle\gamma_P,\alpha_P^\vee\rangle=1$ and $\langle\gamma_P,\beta^\vee\rangle=0$ for every simple root $\beta$ of $L_P$. We keep this normalization even when $\gamma_P$ is not integral. For fixed $\tau$ and $s\in\C$, take the additional character $\xi(l)=e^{\langle2s\gamma_P,\mathcal H_P(l)\rangle}$ on $L_P$. This gives the family
\begin{equation}\label{eq:parabolic-central-deformation}
 I(\tau, s)=\Ind_P^G\bigl(
       \tau,e^{s\langle2\gamma_P,\mathcal{H}_P(\cdot)\rangle}\bigr).
\end{equation}
The twisting character depends holomorphically on $s$. It is trivial on the derived group of $L_P$ and on $Z(G)$, and varies along the one-dimensional split central quotient $A_P^\circ/A_G^\circ$. Multiplying this character by $\delta_P^{1/2}$ in \eqref{eq:normalized-parabolic-induction} gives
\begin{equation}\label{eq:parabolic-family-law}
 f_s(mang)
   =e^{\langle2(\rho_P+s\gamma_P),\mathcal H_P(ma)\rangle}
                 \tau(ma)f_s(g).
\end{equation}
Thus the family becomes unnormalized induction when $\rho_P+s\gamma_P=0$.

\begin{defn}[\bf Point of evaluation]\label{def:point_of_evaluation}
The point of evaluation for $P$ is
\[
 s_0=-\langle\rho_P,\alpha_P^\vee\rangle.
\]
Since $2\rho_P$ is a sum of roots, $2s_0\in\Z$.
\end{defn}

Equation~\eqref{eq:parabolic-parameter-normalization} gives $\rho_P=-s_0\gamma_P$. Sometimes we would use the simplified notations:
\begin{equation}\label{eq:parabolic-modulus-family}
 I(\tau, s):=\Ind_P^G\bigl(\tau,\delta_P^{-s/(2s_0)}\bigr).
\end{equation}
Here the complex power means $\exp\bigl(-s\log\delta_P(l)/(2s_0)\bigr)$, using the real logarithm of the positive character $\delta_P$. At $s=s_0$, the exponential factor in \eqref{eq:parabolic-family-law} is $1$, so $f_{s_0}(mang)=\tau(ma)f_{s_0}(g)$. We denote the resulting fixed representation by
\[
 \Pi:=I(\tau,s_0)
     =\Ind_P^G(\tau,\delta_P^{-1/2})
     ={}^a\Ind_P^G(\tau).
\]

For $\beta\in\Phi(P)$, put $j=\langle\gamma_P,\beta^\vee\rangle$. This is the positive integer coefficient of $\alpha_P^\vee$ in the simple-coroot expansion of $\beta^\vee$. The formula for $\delta_P^{1/2}$ in \eqref{eq:parabolic-modulus-differential} gives
\[
 (\delta_P^{1/2}\circ\beta^\vee)(z)
    =|z|_\C^{\langle\rho_P,\beta^\vee\rangle}
    =|z|_\C^{-js_0}.
\]
Raising this character to the power $-s/s_0$ yields
\begin{equation}\label{eq:parabolic-root-twist}
 \bigl(\delta_P^{-s/(2s_0)}\circ\beta^\vee\bigr)(z)
   =|z|_\C^{s\langle\gamma_P,\beta^\vee\rangle}
   =|z|_\C^{js}.
\end{equation}
This explains the scale of $s$: a root with coefficient $j$ acquires the twist $|\cdot|_\C^{js}$.
\vspace{1em}

\noindent\textbf{\large Induction in stages.}
For any standard parabolic $P=L_PN_P$, let $\chi$ be a smooth character of $T$ and $\xi$ a smooth character of $A_P^\circ$, extended to $L_P$ as above. Write $I_{L_P}(\chi)=\Ind_{B\cap L_P}^{L_P}(\chi)$. We can first induce $\chi$ from $B\cap L_P$ to $L_P$, then twist by $\xi$ and induce from $P$ to $G$. Induction in stages identifies this with inducing directly from $B$ to $G$:
\begin{equation}\label{eq:parabolic-induction-in-stages}
 \Ind_P^G\bigl(I_{L_P}(\chi),\xi\bigr)
       \simeq I\bigl(\chi\cdot(\xi|_T)\bigr).
\end{equation}
Here $\chi\cdot(\xi|_T)$ is the pointwise product $t\mapsto\chi(t)\xi(t)$. The two normalizations combine because
\[
 \delta_B|_T=(\delta_{B\cap L_P}|_T)(\delta_P|_T).
\]

For the family \eqref{eq:parabolic-modulus-family}, suppose $\tau=I_{L_P}(\chi_{\ulm})$ for a smooth character $\chi_{\ulm}$ of $T$. Apply \eqref{eq:parabolic-induction-in-stages} with $\chi=\chi_{\ulm}$ and $\xi=\delta_P^{-s/(2s_0)}$. Then
\[
 \begin{aligned}
 I(\tau, s)
   &=\Ind_P^G\bigl(I_{L_P}(\chi_{\ulm}),\delta_P^{-s/(2s_0)}\bigr)
       \simeq I(\chi_s),\\
 \chi_s(t)&=\chi_{\ulm}(t)\delta_P(t)^{-s/(2s_0)}
                   \qquad(t\in T).
 \end{aligned}
\]
Thus $\chi_s$ is obtained by multiplying $\chi_{\ulm}$ by the restriction of $\delta_P^{-s/(2s_0)}$ to $T$. By \eqref{eq:parabolic-modulus-differential}, the differential of this twist is $(s\gamma_P,s\gamma_P)$, giving
\begin{equation}\label{eq:parabolic-twist-differential}
 d\chi_s=d\chi_{\ulm}+(s\gamma_P,s\gamma_P).
\end{equation}
The defining transformation law for functions in $I(\chi_s)$ uses the character $\delta_B^{1/2}\chi_s$, whose differential is $d\chi_s+\underline\rho_G$.

\begin{exam}[Classical parabolics and half-sums]
\label{exam:maximal}
In type $A_{N-1}$, take $\alpha_i=e_i-e_{i+1}$ for $i<N$. Use the same chain in types $B_N,C_N,D_N$, with $\alpha_N=e_N,\ 2e_N,\ e_{N-1}+e_N$, respectively. As in \Cref{cor:finite_dim_repn}, the coordinates lie in the derived root space and are extended by zero in the central directions. For the maximal parabolic $P_m$ obtained by omitting $\alpha_m$, the table lists its Levi subgroup $L_{P_m}$ and the half-sum \eqref{eq:parabolic-half-sum}. All groups are evaluated at $\C$, with $N\geq4$ in type $D_N$.
\[
\begingroup
\setlength{\arraycolsep}{4pt}
\renewcommand{\arraystretch}{1.35}
\begin{array}{c|c|c|l}
 G & m & L_{P_m} & \rho_{P_m}\\ \hline
 \GL_N & 1\leq m<N & \GL_m\times\GL_{N-m}
   & \frac{N-m}{2}\sum_{i=1}^{m}e_i
       -\frac m2\sum_{i=m+1}^{N}e_i\\ \hline
 \GSp_{2N} & 1\leq m<N & \GL_m\times\GSp_{2(N-m)}
   & \frac{2N-m+1}{2}\sum_{i=1}^{m}e_i\\
 & N & \GL_N\times\Gm
   & \frac{N+1}{2}\sum_{i=1}^{N}e_i\\ \hline
 \GSpin_{2N+1} & 1\leq m<N & \GL_m\times\GSpin_{2(N-m)+1}
   & \frac{2N-m}{2}\sum_{i=1}^{m}e_i\\
 & N & \GL_N\times\Gm
   & \frac N2\sum_{i=1}^{N}e_i\\ \hline
 \GSpin_{2N} & 1\leq m\leq N-2 & \GL_m\times\GSpin_{2(N-m)}
   & \frac{2N-m-1}{2}\sum_{i=1}^{m}e_i\\
 & N & \GL_N\times\Gm
   & \frac{N-1}{2}\sum_{i=1}^{N}e_i\\
 & N-1 & \GL_N\times\Gm
   & \frac{N-1}{2}\bigl(\sum_{i=1}^{N-1}e_i-e_N\bigr)
\end{array}
\endgroup
\]
Each complex root occurring in the unipotent radical of $P_m$ is counted once. The two terminal $D_N$ cases are interchanged by $e_N\mapsto-e_N$. For descriptions of these Levi subgroups in the general-spin groups, see \cite{asgari-cogdell-shahidi}.

\Cref{def:point_of_evaluation} gives the following points of evaluation:
\[
\begin{array}{c|c|c}
 G & \text{omitted root} & s_0\\ \hline
 \GL_N & \alpha_m,\ 1\leq m<N & -N/2\\
 \GSp_{2N} & \alpha_m,\ 1\leq m<N & -(2N-m+1)/2\\
 \GSp_{2N} & \alpha_N & -(N+1)/2\\
 \GSpin_{2N+1} & \alpha_m,\ 1\leq m<N & -(2N-m)/2\\
 \GSpin_{2N+1} & \alpha_N & -N\\
 \GSpin_{2N} & \alpha_m,\ 1\leq m\leq N-2 & -(2N-m-1)/2\\
 \GSpin_{2N} & \alpha_{N-1}\ \text{or}\ \alpha_N & -(N-1).
\end{array}
\]
Indeed, the coroots at the last node are $2e_N$ in type $B_N$, $e_N$ in type $C_N$, and $e_{N-1}+e_N$ in type $D_N$. Pairing them with the half-sums above gives the displayed values.
\end{exam}

\section{\bf Cohomological representations}
\label{sec:temp_coh_repn}
An irreducible admissible smooth representation $\pi$ of $G$ is \textit{cohomological} with respect to an irreducible finite-dimensional algebraic representation $V_{\underline\lambda}$ if
\[
 H^\bullet(\mathfrak g,K;\pi\otimes V_{\underline\lambda})\ne0,
 \qquad K=CZ(G).
\]
It is \textit{essentially tempered} if $\pi|_{G^1}$ is tempered. We use the same notation $\pi$ for a smooth representation and its Harish--Chandra module. For the definition and basic properties of relative Lie algebra cohomology, see \cite[Chapter~I]{borel-wallach}. The highest weight of a coefficient representation supporting tempered cohomology satisfies a \textit{purity condition} relating its two components on the derived group. We use this relation and the inverse central character to realize a cohomological representation of $L_P$ as a principal series and to describe its parabolic induction to $G$.

For a Levi subgroup $L$ (including $G$), put $K_L=(C\cap L)Z(L)$.

\begin{lemma}[\bf Purity condition]
\label{lem:purity}
Let $\tau$ be an irreducible essentially tempered representation of $L$, cohomological with respect to $V_{\underline\mu}$, where
\[
 \underline\mu=(\mu+\mathsf w,\mu'+\mathsf w'),
\]
with $\mu,\mu'$ in the derived directions and $\mathsf w,\mathsf w'$ central, as in \Cref{cor:finite_dim_repn}. Let $w_L$ be the longest element of $W_{\mathrm{res}}(L)$. Then
\[
 \mu'=-w_L(\mu).
\]
\end{lemma}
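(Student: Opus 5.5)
We argue through infinitesimal characters and the Wigner lemma, reducing the essentially tempered case to an explicit principal series description of tempered representations of the complex group $L$. Twisting $\tau$ by a character of $L$ changes only the central parts $\mathsf w,\mathsf w'$. So we may first restrict attention to the derived directions and treat $\tau|_{L^1}$, which is tempered.

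\emph{Step 1: the infinitesimal character.} By Wigner's lemma \cite[Chapter~I]{borel-wallach}, nonvanishing of $H^\bullet(\mathfrak l,K_L;\tau\otimes V_{\underline\mu})$ forces $\tau$ to have the same infinitesimal character as the contragredient $V_{\underline\mu}^\vee$. For $L=\mathrm L_0(\C)$ with $\mathfrak l=\mathfrak l_0\oplus\mathfrak l_0$, the lowest weight of $V_{\underline\mu}$ is $(w_L\mu,w_L\mu')$ in the derived directions. Hence the infinitesimal character of $\tau$ is represented, modulo $W_0\times W_0$ (restricted to $L$), by
\[
 -(w_L\mu,w_L\mu')-\underline\rho_L.
\]
Equivalently, it is $W_L\times W_L$-conjugate to $(\mu+\rho_L,\mu'+\rho_L)$.

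\emph{Step 2: tempered representations of complex groups.} Every irreducible tempered representation of the complex group $L^1$ is an irreducible unitary principal series $I_{L}(\chi)$, obtained from a unitary character $\chi$ of $T$. This holds because complex groups have no discrete series off the torus; it is Zhelobenko/Harish-Chandra, in the form of the Langlands classification. Unitarity of $\chi$ means $d\chi=\underline\nu=(\nu,\nu')$ with $\operatorname{res}_A\underline\nu=\nu+\nu'\in i\Lambda_\R$. The infinitesimal character of $I_L(\chi)$ is the $W_L\times W_L$-orbit of $(\nu,\nu')$.

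\emph{Step 3: matching.} Combining the two steps, there are $w_1,w_2\in W_L$ with
\[
 \nu=w_1(\mu+\rho_L),\qquad \nu'=w_2(\mu'+\rho_L).
\]
Both right-hand sides are real weights, so $\nu+\nu'$ is real. Unitarity makes it purely imaginary, hence
\[
 w_1(\mu+\rho_L)=-w_2(\mu'+\rho_L).
\]
Thus $\mu'+\rho_L=-w_2^{-1}w_1(\mu+\rho_L)$. The left side is regular dominant. Since $-w_L$ preserves the dominant chamber, the right side lies in the $W_L$-orbit of the dominant weight $-w_L(\mu+\rho_L)$. A $W_L$-orbit contains only one dominant element, so
\[
 \mu'+\rho_L=-w_L(\mu+\rho_L)=-w_L\mu+\rho_L,
\]
using $w_L\rho_L=-\rho_L$. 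This gives $\mu'=-w_L(\mu)$.

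\emph{Main obstacle.} The delicate point is Step 2 together with correct bookkeeping of the essentially tempered twist. One must check that the twist moves only the central components and that restricting to $L^1$ loses nothing in the derived directions. The reason is that the derived parts of the infinitesimal character are unaffected by characters trivial on $L^1$; we would verify this via \Cref{cor:finite_dim_repn}.

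For Step 2 we would cite Knapp's or Wallach's classification of tempered representations of complex groups. Alternatively, the cohomological ones can be identified directly: Vogan--Zuckerman shows the only $\theta$-stable parabolic giving tempered $A_\q(\lambda)$ for a complex group is the one with $\q$ a Borel, and this yields the principal series explicitly. We also need the $W_L\times W_L$ description of the infinitesimal character in the product coordinates \eqref{eq:absolute-weyl-diagonal}, which is routine.
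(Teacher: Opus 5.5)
Your argument is correct, but it takes a different route from the paper.

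\textbf{The paper's route.} The paper never uses the structure of the tempered dual. After cancelling central characters and passing to $L^1$, it uses only that $\tau|_{L^1}$ is tempered, hence unitarizable. It then invokes the vanishing criterion of \cite[Chapter~II, Proposition~6.12]{borel-wallach}, which forces the coefficient module restricted to $L^1$ to be invariant under the Cartan involution. Since $\theta(\nu,\nu')=(-\nu',-\nu)$, the $\theta$-twist has derived highest weight $(-w_L\mu',-w_L\mu)$. Comparing this with $(\mu,\mu')$ gives $\mu'=-w_L(\mu)$.

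\textbf{Your route.} You combine Wigner's lemma, which only constrains infinitesimal characters, with the explicit description of tempered representations of complex groups as constituents of principal series from unitary characters of $T$. You then extract purity from the vanishing of the split part $\nu+\nu'$ together with uniqueness of the dominant element in a $W_L$-orbit.

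\textbf{What each buys.} The paper's route is shorter and needs no classification input. It also gives purity for every unitarizable cohomological representation. Your argument uses temperedness essentially: for unitary nontempered modules such as the trivial representation, the split part $\nu+\nu'$ does not vanish, so your Step~3 has nothing to work with. In exchange, your route determines the parameter $(\nu,\nu')$ of $\tau$ up to $W_L\times W_L$. That is most of \Cref{prop:principal-series}, which the paper proves separately by citing Borel--Wallach's tempered cohomology formula.

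\textbf{A slip in Step~1.} In the derived directions, the infinitesimal character of $V_{\underline\mu}^\vee$ is $(-w_L\mu,-w_L\mu')+\underline\rho_L$, i.e.\ the $W_L\times W_L$-orbit of $-(\mu+\rho_L,\mu'+\rho_L)$. It is not $-(w_L\mu,w_L\mu')-\underline\rho_L$, which has the wrong sign on $\underline\rho_L$. Nor is it the orbit of $(\mu+\rho_L,\mu'+\rho_L)$: that is the infinitesimal character of $V_{\underline\mu}$ itself, which in general is not conjugate to that of its dual. The slip is harmless. Step~3 only needs both components to be Weyl translates of $\mu+\rho_L$ and $\mu'+\rho_L$ carrying a common sign. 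Running it with $\nu=-w_1(\mu+\rho_L)$ and $\nu'=-w_2(\mu'+\rho_L)$ gives the same identity $w_1(\mu+\rho_L)=-w_2(\mu'+\rho_L)$ and the same conclusion.
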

\begin{proof}
Nonzero relative cohomology forces the central actions on $\tau$ and $V_{\underline\mu}$ to cancel. Since $K_L^\circ$ contains $Z(L)^\circ$, the relative cochain complex then reduces to that for $L^1$ and its maximal compact subgroup. The representation $\tau|_{L^1}$ is tempered and hence unitarizable. The vanishing criterion of \cite[Chapter~II, Proposition~6.12]{borel-wallach} implies that the coefficient representation restricted to $L^1$ is invariant under the Cartan involution. By the formula for the action on the two weight components in \eqref{eq:theta-paired-weights}, this condition is $\mu'=-w_L(\mu)$.
\end{proof}

For the cohomological degree in the next proposition, we use Borel--Wallach's definition \cite[Chapter~III, \S4.3]{borel-wallach}:
\begin{equation}\label{eqn:q-0-L}
 q_0(L)=\frac12\bigl(\dim_{\R}L-\dim_{\R}(C\cap L)
                 -\operatorname{rk}L+\operatorname{rk}(C\cap L)\bigr).
\end{equation}
Here $C\cap L$ is a maximal compact subgroup of $L$, and $\operatorname{rk}$ denotes the rank of the complexified real Lie algebra. For $L=\mathrm L_0(\C)$, the dimension difference is $\dim_{\C}\mathrm L_0$, while the two ranks are $2\operatorname{rk}_{\C}\mathrm L_0$ and $\operatorname{rk}_{\C}\mathrm L_0$, respectively. Hence
\[
 q_0(L)=\frac{\dim_{\C}\mathrm L_0
                    -\operatorname{rk}_{\C}\mathrm L_0}{2}
       =\#\Phi_0^+(L)=q_0(L^1).
\]
Removing the center subtracts equally from the complex dimension and rank. Thus passing to the derived group as in \Cref{lem:purity} preserves the lowest nonzero cohomological degree. For $P=L_PN_P$, root counting gives
\begin{equation}\label{eq:complex-bottom-degree}
 q_0(G)=q_0(L_P)+\#\Phi(P)
       =q_0(L_P)+\tfrac12\dim_{\R}N_P.
\end{equation}

\begin{rem}[Bottom cohomological degree]
For $\tau$ and $V_{\underline\mu}$ as in \Cref{lem:purity}, the relative cohomology $H^\bullet(\mathfrak l,K_L^\circ;\tau\otimes V_{\underline\mu})$ vanishes below $q_0(L)$ and is one-dimensional in degree $q_0(L)$. This is the bottom degree one can read from the tempered cohomology formula in Borel--Wallach's book~\cite[Chapter~III, Theorem~5.1 and Corollary~5.2]{borel-wallach}, after the central reduction above. In the principal-series realization of \Cref{prop:principal-series}, this degree is the length of the contributing Kostant element $(w_L,1)$, while the remaining torus factor contributes in degree zero.

For the parabolic inductions in Theorem~\ref{thm:mainthm}, the formula \cite[III.3.3(ii), (iii)]{borel-wallach} adds the degree in which the matching Levi coefficient occurs in nilradical cohomology to the bottom Levi degree. \Cref{lem:combinatorial_lemma} shows that this added degree is exactly $\#\Phi(P)$ in \eqref{eq:comb-conclusion}. Thus \eqref{eq:complex-bottom-degree} identifies the ambient bottom degree as $q_0(G)$.
\end{rem}

\begin{prop}[\bf Principal-series realizations]
\label{prop:principal-series}
Let $P=L_PN_P$ be a standard parabolic, and let $\tau$ and $V_{\underline\mu}$ satisfy the hypotheses of \Cref{lem:purity} for the Levi subgroup $L_P$. Then $\tau$ is the irreducible normalized principal series of $L_P$
\[
 \tau\simeq I_{L_P}(\chi_{\ulm}),
\]
where
\begin{equation}\label{eq:complex-tempered-parameter}
 d\chi_{\ulm}=-(w_{L_P},1)(\underline\mu+\underline\rho_{L_P})=\bigl(-w_{L_P}(\mu)+\rho_{L_P}-\mathsf w,
             \ w_{L_P}(\mu)-\rho_{L_P}-\mathsf w'\bigr).
\end{equation}
Its central character is the inverse of the coefficient's central character. Its bottom nonzero cohomology is one-dimensional:
\[
 \dim H^{q_0(L_P)}(\mathfrak l_P,K_{L_P}^\circ;
                    \tau\otimes V_{\underline\mu})=1.
\]
For every smooth character $\xi$ of $A_P^\circ$, extended trivially to $M_P$, the normalized parabolic induction of $\tau\otimes\xi$ is a principal series of $G$:
\[
 \Ind_P^G(\tau,\xi)
   \simeq I\bigl(\chi_{\ulm}\cdot(\xi|_T)\bigr).
\]
In particular, the unnormalized induction is
\[
 {}^a\Ind_P^G(\tau)
   \simeq I\bigl(\chi_{\ulm}\cdot(\delta_P^{-1/2}|_T)\bigr),
\]
whose inducing character for normalized induction from $B$ has differential $d\chi_{\ulm}-\underline\rho_P$.
\end{prop}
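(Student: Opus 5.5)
The plan has two parts: first identify $\tau$ with a principal series of $L_P$, then pass to $G$ by induction in stages.

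\emph{Reduction to the derived group.} Nonzero cohomology forces the central character of $\tau$ to be the inverse of that of $V_{\underline\mu}$, as noted in the proof of \Cref{lem:purity}. So it suffices to treat $\tau|_{L_P^1}$ together with the derived weight $(\mu,\mu')$, where $\mu'=-w_{L_P}(\mu)$ by \Cref{lem:purity}.

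\emph{Identification of $\tau$.} For the complex group $L_P^1$, Enright's classification (equivalently, Borel--Wallach Ch.~III, \S5 for complex groups) describes the cohomological tempered representations. With coefficient $V_{(\mu,\mu')}$ satisfying $\mu'=-w_{L_P}(\mu)$, the unique such representation is the irreducible unitary principal series whose infinitesimal character is $(\mu+\rho_{L_P},\mu'+\rho_{L_P})$ up to sign, as it must be to match $V^*$. Its inducing parameter is obtained by applying the Kostant element $(w_{L_P},1)$:
\[
-(w_{L_P},1)(\underline\mu+\underline\rho_{L_P}) = \bigl(-w_{L_P}\mu+\rho_{L_P},\,-\mu'-\rho_{L_P}\bigr).
\]
Substituting $\mu'=-w_{L_P}\mu$ turns the second component into $w_{L_P}\mu-\rho_{L_P}$. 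Adding back the central parts $-\mathsf w,-\mathsf w'$ gives \eqref{eq:complex-tempered-parameter}.

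Two checks confirm that this is the right character.
\begin{itemize}
\item By \eqref{eq:Cartan-zero-extended-projections}, the split part is $\tfrac12(\nu+\nu')=-\tfrac12(\mathsf w+\mathsf w')$. This is purely central, so $\chi_{\ulm}$ is unitary on $T\cap L_P^1$, which gives temperedness.
\item The difference $\nu-\nu'=-2w_{L_P}\mu+2\rho_{L_P}-\mathsf w+\mathsf w'$ lies in $\Lambda$. Indeed, $2\rho_{L_P}$ is a sum of roots, and $-w_{L_P}\mu-\mathsf w$ and $-\mathsf w'$ differ from integral weights compatibly, so $\chi_{\ulm}$ is single-valued.
\end{itemize}
Irreducibility of unitary principal series of complex groups with regular compact part (Wallach/Želobenko) then gives $\tau\simeq I_{L_P}(\chi_{\ulm})$.

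\emph{One-dimensionality in degree $q_0(L_P)$.} I will compute via Delorme's/Borel--Wallach's formula
\[
H^\bullet(\mathfrak l,K;I(\chi)\otimes V)=\bigoplus_{w}H^{\bullet-\ell(w)}(\mathfrak t,T_c;\chi\otimes\mathbb C_{w\cdot\ulm}),
\]
where the sum runs over Kostant representatives for $\mathfrak u$-cohomology. Only $w=(w_{L_P},1)$ matches $\chi_{\ulm}$, because the absolute Weyl group acts freely on regular weights. The torus factor contributes $\wedge^\bullet\mathfrak a^*$, which is one-dimensional in degree $0$. The total degree is $\ell(w_{L_P})=\#\Phi_0^+(L_P)=q_0(L_P)$.

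\emph{Induction to $G$.} Since $\xi$ is trivial on $M_P\supset L_P^1$, twisting commutes with $I_{L_P}$. That is, $\tau\otimes\xi\simeq I_{L_P}(\chi_{\ulm}\cdot\xi|_T)$. Then \eqref{eq:parabolic-induction-in-stages} yields $\Ind_P^G(\tau,\xi)\simeq I(\chi_{\ulm}\cdot\xi|_T)$.

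Taking $\xi=\delta_P^{-1/2}$ gives the unnormalized induction. Its differential is $d\chi_{\ulm}-\underline\rho_P$ by \eqref{eq:parabolic-modulus-differential}, which is the final claim.

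\emph{Main obstacle.} The hardest step will be pinning down the parameter exactly, including the choice of Kostant element $(w_{L_P},1)$ versus $(1,w_{L_P})$. This choice depends on the positive-system conventions, and the sign and $\rho$-shift conventions of the normalized induction (via $\delta_B^{1/2}$) must be tracked so that the infinitesimal character of $I(\chi_{\ulm})$ matches that of $V_{\underline\mu}^*$. If this bookkeeping fails, the fallback is a direct check: verify that $\chi_{\ulm}\delta_B^{1/2}$ occurs as a $\mathfrak t$-weight matching the $\mathfrak u$-cohomology of $V_{\underline\mu}$ in degree $\ell(w_{L_P})$.
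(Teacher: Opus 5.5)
Your proposal is correct and matches the paper's proof step for step. The shared steps are: reduction to $L_P^1$ via purity; identification of the parameter from the Borel--Wallach classification and the infinitesimal character, up to the diagonal Weyl action (this also settles your $(w_{L_P},1)$ versus $(1,w_{L_P})$ worry, since $(1,w_{L_P})=\jmath(w_{L_P})(w_{L_P},1)$ gives a Weyl-conjugate character); selection of the unique Kostant element $(w_{L_P},1)$ by regularity; and induction in stages. For the Kostant step, note that your displayed cohomology formula omits the $\delta_B^{1/2}$ factor; with it, the matching condition reads $d\chi_{\underline\mu}+(w_{L_P},1)(\underline\mu+\underline\rho_{L_P})=0$.

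The only substantive difference is the irreducibility input. You cite Wallach--\v{Z}elobenko irreducibility of unitary principal series of complex groups, which is legitimate here because the split part of $d\chi_{\underline\mu}$ is central. The paper instead applies the Speh--Vogan complex-root criterion, using that the two exponents on each positive coroot of $L_P$ are nonzero integers of opposite signs; this is the same tool it needs later for the non-unitary principal series at $s_0$.
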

\begin{proof}
After passing to $L_P^1$ as in \Cref{lem:purity}, \cite[Chapter~III, Theorem~5.1 and Corollary~5.2]{borel-wallach} shows that $\tau|_{L_P^1}$ occurs in a unitary principal series with zero split parameter. For a complex group the inducing discrete-series datum is a character of a compact torus, and \eqref{eq:pre-resA} shows that the two derived exponents are opposite. The infinitesimal-character condition in \cite[Chapter~III, Theorem~3.3]{borel-wallach}, together with \Cref{lem:purity} and regularity of $\mu+\rho_{L_P}$, identifies this pair, up to the action of the same Weyl group element on both components, with
\[
 (-w_{L_P}(\mu)+\rho_{L_P},\;w_{L_P}(\mu)-\rho_{L_P}).
\]
Restoring the inverse central character places $\tau$ in a principal series representation $I_{L_P}(\chi_{\ulm})$ of $L_P$ with parameter \eqref{eq:complex-tempered-parameter}. The difference of the two exponents is
\[
 -w_{L_P}(\mu+\mathsf w)+(\mu'+\mathsf w')+2\rho_{L_P}\in\Lambda,
\]
so the single-valuedness criterion in \cref{ex:torus-characters} implies a smooth character of the full torus, even when the separated weights are not integral.

On every positive coroot of $L_P$, the two exponents are nonzero integers of opposite signs. The complex-root reducibility criterion \cite[Theorem~6.19]{speh-vogan} therefore shows that the containing principal series is irreducible, proving the realization of $\tau$.

In Borel--Wallach's cohomology formula \cite[III.3.3(ii)]{borel-wallach}, \eqref{eq:complex-tempered-parameter} selects the unique Kostant representative $(w_{L_P},1)$:
\[
 d\chi_{\ulm}+(w_{L_P},1)(\underline\mu+\underline\rho_{L_P})=0.
\]
Its length is $q_0(L_P^1)=q_0(L_P)$, and the degree-zero torus cohomology is one-dimensional. Applying \cite[Chapter~III, Theorem~3.3]{borel-wallach} on $L_P^1$ and restoring the center gives the stated bottom cohomology.

Finally, apply \eqref{eq:parabolic-induction-in-stages} with $\chi=\chi_{\ulm}$. Taking $\xi=\delta_P^{-1/2}$ realizes the unnormalized induction as the normalized principal series above, whose inducing differential follows from \eqref{eq:parabolic-modulus-differential}.
\end{proof}
\Cref{sec:irreducibility-principal-series} proves irreducibility of the induced representations of $G$ at the evaluation point under the criticality hypotheses introduced below.

\begin{exam}[Classical cohomological parameters]
\label{ex:classical-cohomological-parameters}
Let $\pi$ be an irreducible, essentially tempered representation that is cohomological with respect to $V_{\underline\lambda}$. Retain the derived and central decomposition
\[
 \underline\lambda=(\lambda+\mathsf w,\lambda'+\mathsf w'),
 \qquad \underline{\mathsf{w}}=(\mathsf w,\mathsf w').
\]
\Cref{lem:purity} requires $\lambda'=-w_G(\lambda)$. Applying \eqref{eq:complex-tempered-parameter} with $P=G$, we obtain $\pi\simeq I(\chi)$ with
\[
 d\chi=
 \bigl(-w_G(\lambda)+\rho_G-\mathsf w,\;
        w_G(\lambda)-\rho_G-\mathsf w'\bigr).
\]
On the derived torus $T^1$, the central terms vanish, so
\[
 d(\chi|_{T^1})
   =\bigl(-w_G(\lambda)+\rho_G,\;w_G(\lambda)-\rho_G\bigr).
\]
For the simply connected groups below, $-w_G(\lambda)+\rho_G$ is an integral weight of $T^1$. In the classical root coordinates of \cref{exam:maximal}, write $\lambda=\sum_i k_i e_i$. The table gives $\lambda'$ and the first component of the differential of the inducing character:
\[
\begin{array}{c|c|c}
 G^1 & \lambda' & (-w_G(\lambda)+\rho_G)_i\\ \hline
 \Spin_{2n+1} & \lambda & k_i+n-i+\tfrac12\\
 \Sp_{2n} & \lambda & k_i+n-i+1\\
 \Spin_{2n},\ n\text{ even} & \lambda & k_i+n-i\\
 \Spin_{2n},\ n\text{ odd}
   & (k_1,\ldots,k_{n-1},-k_n)
   & \begin{cases}
       k_i+n-i,&i<n,\\
       -k_n,&i=n.
     \end{cases}
\end{array}
\]
Half-integral spin coordinates are interpreted in the spin weight lattice.

For $\mathrm G_0=\GL_n$, write $\lambda=(\lambda_1,\ldots,\lambda_n)$ in the derived root space, so $\sum_i\lambda_i=0$, and write the rational extensions of the algebraic central weights to the full torus as $\mathsf w=(c,\ldots, c)$, $\mathsf w'=(c',\ldots,c')$, for $c, c'\in \frac{1}{n}\Z$. Thus the full coefficient weight is
\[
 \underline\lambda=
 \bigl((\lambda_i+ c)_{i=1}^n;\;
       (-\lambda_{n+1-i}+c')_{i=1}^n\bigr).
\]
Both lists of highest-weight coordinates are integral, although their summands need not be. With $\rho_{G,i}=(n+1-2i)/2$, the inducing character is
\[
 \chi(\operatorname{diag}(z_1,\ldots,z_n))
  =\prod_{i=1}^{n}
       z_i^{-\lambda_{n+1-i}+\rho_{G,i}-c}
       \bar z_i^{\,\lambda_{n+1-i}-\rho_{G,i}-c'}.
\]
The exponent difference is integral, so each factor is single-valued as in \cref{ex:torus-characters}. On scalar matrices,
\[
 \chi(zI_n)
  =z^{-nc}\bar z^{-nc'}
  =\underline{\mathsf w}(zI_n)^{-1}.
\]
The highest-weight coordinates in \cite[Section~2.5.1]{raghuram-imaginary} are $\lambda_i+c$ and $-\lambda_{n+1-i}+c'$ in our notation. His purity weight is the common value  $c+c'$.

The table follows from the longest Weyl element, which is $-1$ in types $B_n,C_n$ and in type $D_n$ with $n$ even. In type $D_n$ with $n$ odd, it fixes the last coordinate and negates the others. For $\GL_n$, $w_G$ reverses the coordinates and fixes both central weights, giving the displayed inducing character.
\end{exam}

\section{\bf \texorpdfstring{$L$}{L}-functions and criticality}
\label{sec:L-functions-criticality}

\subsection{Langlands-Shahidi \texorpdfstring{$L$}{L}-functions}
\label{subsec:LS_L-functions}
Let $\widehat G$ be the connected complex reductive group whose based root datum is dual to that of $\mathrm G_\C$. The Galois form of the $L$-group is
\[
 {}^LG=\widehat G\rtimes\operatorname{Gal}(\C/\R),
\]
with the pinned action induced by the Galois action on the based root datum \cite[Section~2.1]{borel}. In the Weil form, replace $\operatorname{Gal}(\C/\R)$ by $W_\R$, acting through the same quotient. In our restriction-of-scalars setting,
\[
 \widehat G=\widehat{\mathrm G}_0\times\widehat{\mathrm G}_0,\qquad
 {}^LG=(\widehat{\mathrm G}_0\times\widehat{\mathrm G}_0)
                  \rtimes\operatorname{Gal}(\C/\R),
\]
and the nontrivial Galois element interchanges the two factors. For example, when $\mathrm G_0=\mathrm{GL}_n$,
\[
 {}^LG=(\mathrm{GL}_n(\C)\times\mathrm{GL}_n(\C))
                  \rtimes\operatorname{Gal}(\C/\R).
\]

For the complex place itself, $W_\C=\C^\times$ and the Weil $L$-group of $\mathrm G_0$ is $\widehat{\mathrm G}_0\times W_\C$. An \textit{$L$-parameter} is therefore specified by a continuous homomorphism with semisimple image
\begin{equation}\label{eq:complex-L-parameter}
 \varphi:W_\C=\C^\times\longrightarrow\widehat{\mathrm G}_0,
\end{equation}
up to $\widehat{\mathrm G}_0$-conjugacy. The corresponding homomorphism into $\widehat{\mathrm G}_0\times W_\C$ is $z\mapsto(\varphi(z),z)$. The restriction-of-scalars correspondence identifies these with parameters for $G$: restrict a parameter $W_\R\to{}^LG$ to $W_\C$ and project to the first dual factor \cite[Section~8.2 and Proposition~8.4]{borel}. We use the complex-place description \eqref{eq:complex-L-parameter} for the local factors below.
\vspace{1em}

\noindent\textbf{\large L-parabolics and adjoint representations}
A standard $L$-parabolic ${}^LP\subset{}^LG$ projects onto $\mathrm{Gal}(\C/\R)$ and has a standard parabolic identity component in $\widehat G$. It is the normalizer of this identity component in ${}^LG$ \cite[Section 3.3]{borel}. Its identity component is stable under interchange of the two factors, so every standard $L$-parabolic here has the form
\[
 {}^LP=(\widehat P_0\times\widehat P_0)
           \rtimes\mathrm{Gal}(\C/\R),
\]
where $\widehat P_0$ is a standard parabolic of $\widehat{\mathrm G}_0$.

The duality $\Delta_{abs}(G)^\vee=\Delta(\widehat G)$ describes these parabolics through the absolute simple roots. Our positive systems give
\[
 \Delta_{abs}(G)
 =\{\alpha^{(1)}:\alpha\in\Delta_{res}(G)\}
   \sqcup
  \{\alpha^{(2)}:\alpha\in\Delta_{res}(G)\}.
\]
The root-restriction map in \eqref{eq:Res} has the two-element fiber
\[
 \operatorname{res}_{\mathrm{rt}}^{-1}(\alpha)
 =\{\alpha^{(1)},\alpha^{(2)}\}
 \qquad(\alpha\in\Delta_{res}(G)).
\]
Both lifts are complex roots and are exchanged, up to sign, by the Cartan involution.

Such a standard $L$-parabolic is \textit{relevant} if the simple roots of its Levi subgroup correspond to $\operatorname{res}_{\mathrm{rt}}^{-1}(I)$ for some $I\subseteq\Delta_{\rm res}(G)$. Every standard $L$-parabolic in the present setting is relevant, since the Levi subgroups of its two factors have the same simple roots.
\vspace{1em}

\noindent\textbf{\large The adjoint $L$-functions}
Let $\varphi$ be a complex $L$-parameter and let $r$ be a finite-dimensional algebraic representation of its target dual group. We apply this to the representations $r_j$ occurring in the adjoint action of $\widehat{\mathrm L}_{P,0}$ on $\widehat{\mathfrak n}_{P,0}$, defined below. The Weil representation $r\circ\varphi$ is a sum of characters of $W_\C$, each of the form
\[
 \psi(z)=\left(\frac{z}{|z|}\right)^m|z|_\C^t,\qquad
 m\in\Z,\quad t\in\C,\quad |z|_\C=z\bar z.
\]
Define
\begin{equation}\label{eq:parameter-character-L-factor}
 \Gamma_\C(u)=2(2\pi)^{-u}\Gamma(u),\qquad
 L_\C(u,\psi)=\Gamma_\C\left(u+t+\frac{|m|}{2}\right).
\end{equation}
If $r\circ\varphi=\bigoplus_k\psi_k$, counted with multiplicity, put
\begin{equation}\label{eq:parameter-L-factor}
 L_\C(u,\varphi,r):=L_\C(u,r\circ\varphi)
       =\prod_k L_\C(u,\psi_k).
\end{equation}
This is the complex Artin local factor \cite[Section~8.2, pp.~135--136]{shahidi-book}. It depends only on $\varphi$ and $r$, with $L$-function variable $u$. Compatibility with induction from $W_\C$ to $W_\R$ identifies it with the induced real Weil representation's factor. Thus the two dual factors in $\hat G$ contribute one complex-place factor.

Fix a standard maximal parabolic $P=L_PN_P$, corresponding to $I=\Delta_{\mathrm{res}}(G)\setminus\{\alpha_P\}$. Let $\widehat P_0=\widehat{\mathrm L}_{P,0}\widehat{\mathrm N}_{P,0}$ be the corresponding standard parabolic of $\widehat{\mathrm G}_0$. The adjoint action of $\widehat{L}_{P,0}$ on the dual unipotent radical gives
\[
 r:\widehat{\mathrm L}_{P,0}\longrightarrow
       \mathrm{GL}(\widehat{\mathfrak n}_{P,0}),\qquad
 \widehat{\mathfrak n}_{P,0}=\operatorname{Lie}(\widehat{\mathrm N}_{P,0}).
\]
The grading of its root-space decomposition uses the same integers as the character identity \eqref{eq:parabolic-root-twist}:
\[
 \widehat{\mathfrak n}_{P,0}
   =\bigoplus_{\alpha\in\Phi(P)}
                 \widehat{\mathfrak n}_{P,0,\alpha^\vee}.
\]
For $j=1,\cdots ,m$, put $\Phi_j(P)=\{\alpha\in\Phi(P): \langle\gamma_P,\alpha^\vee\rangle=j\}$, where $\gamma_P$ is the same as in \eqref{eq:parabolic-parameter-normalization}.  Let $r_j$ denote the representation on the sum of these root spaces. The grade $j$ is the coefficient of $\alpha_P^\vee$ in the simple-coroot expansion of $\alpha^\vee$. Adding a root of $\widehat{\mathrm L}_{P,0}$ preserves this coefficient, so these sums are $\widehat{\mathrm L}_{P,0}$-stable. Put
\[
 m=\max_{\alpha\in\Phi(P)}\langle\gamma_P,\alpha^\vee\rangle.
\]
Then
\[
 r=\bigoplus_{j=1}^m r_j.
\]
\begin{exam}
\label{ex:classical-adjoint-gradings}
The examples in \cite[Sections~5.1--5.3]{raghuram-cjm} give the following values of $m$. The grading depends only on the based root datum and the chosen parabolic, so the same calculations apply here. All groups in the table are evaluated at $\C$, assuming $a,b\geq1$ and $n\geq2$.
\[
\begingroup
\renewcommand{\arraystretch}{1.25}
\begin{array}{c|c|c}
 G & L_P & m\\ \hline
 \GL_{a+b} & \GL_a\times\GL_b & 1\\
 \mathrm{SO}_{2n+2} & \GL_1\times\mathrm{SO}_{2n} & 1\\
 \mathrm{Sp}_{2n} & \GL_n & 2
\end{array}
\endgroup
\]
Here $P$ is, respectively, the standard $(a,b)$-block parabolic, the stabilizer of an isotropic line, or the Siegel parabolic. Write $\operatorname{Std}_d$ for the standard $d$-dimensional representation.

In the general linear case, $\GL_a(\C)\times\GL_b(\C)$ acts on $\operatorname{Hom}(\C^b,\C^a)$ by $(g,h)\cdot X=gXh^{-1}$, so $r=r_1=\operatorname{Std}_a\boxtimes\operatorname{Std}_b^\vee$. In the orthogonal case, $r=r_1$ is the action $(t,h)\cdot v=t\,hv$ on $\C^{2n}$. In both cases every root space in the Lie algebra of the unipotent radical of the dual parabolic has grade $1$. For the orthogonal example we use the connected special orthogonal group. Passing to the identity component in \cite[Section~5.2]{raghuram-cjm} leaves the nilradical and its grading unchanged.

For the Siegel parabolic of $\mathrm{Sp}_{2n}$, the dual group is $\mathrm{SO}_{2n+1}(\C)$, and
\[
 r=r_1\oplus r_2,\qquad
 r_1=\operatorname{Std}_n,\qquad
 r_2=\bigwedge\nolimits^2\operatorname{Std}_n.
\]
The central element $tI_n\in\GL_n(\C)$ acts on these two summands by $t$ and $t^2$, respectively, giving grades $1$ and $2$. When $n=1$, the exterior square vanishes and $m=1$.
\end{exam}

\subsection{Local factors for the induced family and criticality}
\label{subsec:L-function}
Retain $P=L_PN_P$ and the family $I(\tau, s)$ of \eqref{eq:parabolic-central-deformation}, now with $\tau$ an irreducible admissible representation of $L_P$ with its specified central character. Write
\[
 \varphi_\tau:W_\C\longrightarrow\widehat{\mathrm L}_{P,0}
          \subset\widehat{\mathrm G}_0
\]
for its complex Langlands parameter. For each adjoint summand $r_j$ of \Cref{subsec:LS_L-functions}, put $V_j=r_j\circ\varphi_\tau$ and apply \eqref{eq:parameter-L-factor}.

When $\tau=I_{L_P}(\chi_{\ulm})$ is an irreducible normalized principal series, as in \Cref{prop:principal-series}, write $d\chi_{\ulm}=(\nu,\nu')$ and choose the representative of $\varphi_\tau$ whose image lies in the dual torus and which corresponds to $\chi_{\ulm}$ \cite[Section~11.4]{borel}. The characters obtained by composing $\chi_{\ulm}$ with the coroots are
\[
 \chi_{\ulm}\circ\alpha^\vee(z)
   =z^{p_\alpha}\bar z^{q_\alpha},\qquad
 p_\alpha=\langle\nu,\alpha^\vee\rangle,\quad
 q_\alpha=\langle\nu',\alpha^\vee\rangle.
\]
The product \eqref{eq:parameter-L-factor} becomes
\begin{equation}\label{eq:L_j}
 L_\C(u,\tau,r_j):=L_\C(u,\varphi_\tau,r_j)=L_\C(u,V_j)=\prod_{\alpha\in\Phi_j(P)} L_\C(u,\chi_{\ulm}\circ\alpha^\vee).
\end{equation}
Combining \eqref{eq:parabolic-root-twist} with \eqref{eq:parameter-character-L-factor} gives
\begin{equation}\label{eq:induced-L-factor-shift}
 L_\C\bigl(u,\tau\otimes\delta_P^{-s/(2s_0)},r_j\bigr)
       =L_\C(u+js,\tau,r_j).
\end{equation}
This explains the arguments $js$ and $1+js$ in the normalizing ratios for the induced family.

We now assume that the root exponents $p_\alpha,q_\alpha$ belong to $\tfrac12\Z$. Single-valuedness requires $p_\alpha-q_\alpha\in\Z$ by \cref{ex:torus-characters}. For such exponents, \eqref{eq:parameter-character-L-factor} becomes
\begin{equation}\label{eq:local_L_factor}
 L_\C(u,z^p\bar z^q)
   =\Gamma_\C\left(u+\frac{p+q}{2}+\frac{|p-q|}{2}\right)
   =\Gamma_\C(u+\max\{p,q\}).
\end{equation}
Set
\[
 c_\alpha=\max\{p_\alpha,q_\alpha\},\qquad
 c_\alpha^\vee=\max\{-p_\alpha,-q_\alpha\}.
\]
The dual $V_j^\vee$ has the inverse root characters $z^{-p_\alpha}\bar z^{-q_\alpha}$. Thus \eqref{eq:local_L_factor} implies
\[
 L_\C(u,V_j^\vee)=\prod_{\alpha\in\Phi_j(P)}\Gamma_\C(u+c_\alpha^\vee).
\]
Each original factor has poles at $u=-c_\alpha-\mathbb N$, where $\mathbb N=\{0,1,2,\ldots\}$.

\begin{defn}[\bf Critical set]\label{def:critical_set}
For each $1\leq j\leq m$, define
\[
 \mathrm{Crit}_j=\left\{u\in\R:\begin{array}{l}
 u-c_\alpha\in\Z\quad\text{for all }\alpha\in\Phi_j(P),\\
 L_\C(u,V_j)\text{ and }L_\C(1-u,V_j^\vee) \text{ are finite}
 \end{array}\right\}.
\]
We call $s$ a \textit{hyper-critical point} of the induced family if both $js$ and $1+js$ lie in $\mathrm{Crit}_j$ for every grade $j$.
\end{defn}

\section{\bf Intertwining operators and the rank-one computation}
\label{sec:inter_op}

\subsection{Standard intertwining operators for principal series}
\label{sec:rank-one-int-op}
The representations used in the main theorem are normalized principal series induced from $B$ by \Cref{prop:principal-series} and \eqref{eq:parabolic-induction-in-stages}. We introduce the intertwining operators in this realization.

The restricted Weyl group $W_{\mathrm{res}}(G)$ of \eqref{eq:res_weyl} acts on smooth characters of $T$ by $({}^{w}\chi)(t)=\chi(w^{-1}tw)$.  In this formula and in the integrals below, we use the representatives fixed in \Cref{subsec:root-homomorphisms}. On weights and differentials we retain the notation $w(\lambda)$; thus, if $d\chi=(\nu,\nu')$, then $d({}^{w}\chi)=(w(\nu),w(\nu'))$. Write $l(w)$ for length with respect to the simple restricted reflections. Thus, \(l(w)=\#\{\alpha\in\Phi^+_{\mathrm{res}}(G):w^{-1}(\alpha)<0\}\).

Write $B=TN_B$, and let $\bar N_B$ be the unipotent radical of the opposite Borel subgroup. For $w\in W_{\mathrm{res}}(G)$, put
\[
 N_w=N_B\cap w\bar N_Bw^{-1}.
\]
Its roots are the positive roots $\alpha$ with $w^{-1}(\alpha)<0$. For a smooth character $\chi$ of $T$ and $w\in W_{res}(G)$, the standard intertwining operator between the normalized principal series
\[
 \operatorname{T}_w(\chi):I(\chi)\longrightarrow I({}^{w}\chi),
\]
in its domain of absolute convergence, is defined by
\begin{equation}
\label{eq:inter_op}
 (\operatorname{T}_w(\chi)f)(g)
    =\int_{N_w}f(w^{-1}ug)\,du.
\end{equation}
The measure $du$ is the product of the measures $2\,dx\,dy$ in the fixed complex root coordinates, as in \eqref{eq:rk1-measure}.  On each root subgroup this is the self-dual additive measure for the character $z\mapsto\exp\left(2\pi i(\operatorname{Tr}_{\C/\R}z)\right)$, where $\operatorname{Tr}_{\C/\R}z=z+\bar z=2\operatorname{Re}z$.

Write $d\chi=(\nu,\nu')$. The integral \eqref{eq:inter_op} converges absolutely on every smooth section whenever
\[
 \operatorname{Re}\langle\nu+\nu',\beta^\vee\rangle>0
 \qquad
 \text{for every }\beta\in\Phi_{\mathrm{res}}^+(G)
 \text{ with }w(\beta)<0.
\]
It admits meromorphic continuation as the split parameters vary with $\chi|_{T\cap C}$ fixed; see \cite[Theorem~4.2]{knapp-stein-II}. These inequalities specify a region of absolute convergence. 

For the factorization below, see \cite[Section 4.1]{shahidi-book}.
\begin{lemma}\label{lem:standard-factorization}
With compatible Weyl representatives and product measures, if $l(w_1w_2)=l(w_1)+l(w_2)$, then
\[
 \operatorname{T}_{w_1w_2}(\chi)
   =\operatorname{T}_{w_1}({}^{w_2}\chi)
       \circ\operatorname{T}_{w_2}(\chi).
\]
\end{lemma}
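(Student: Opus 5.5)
We argue first inside the region of absolute convergence, where both sides are given by absolutely convergent integrals, and then extend to all $\chi$ by meromorphic continuation. The tool is the standard decomposition of $N_{w_1w_2}$ when lengths add. Put $w=w_1w_2$ with $l(w)=l(w_1)+l(w_2)$. The set of positive roots made negative by $w^{-1}$ is the disjoint union
\[
 \{\alpha>0: w_1^{-1}\alpha<0\}\ \sqcup\ w_1\{\beta>0: w_2^{-1}\beta<0\}.
\]
This follows from the length formula $l(w)=\#\{\alpha\in\Phi^+_{\mathrm{res}}(G):w^{-1}(\alpha)<0\}$ by a counting argument. At the group level it gives a product decomposition
\[
 N_w=N_{w_1}\cdot\bigl(w_1N_{w_2}w_1^{-1}\bigr),
\]
with uniqueness of factorization. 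Moreover, the product measure on $N_w$, built from the root coordinates, corresponds to the product of the product measures on the two factors. Conjugation by the representative $\dot w_1$ carries the root subgroup of $\beta$ to that of $w_1\beta$ via $x_\beta(u)\mapsto x_{w_1\beta}(\pm u)$. This holds because the Chevalley parametrizations of \Cref{subsec:root-homomorphisms} are compatible with the pinning, and it preserves the measures $2\,dx\,dy$. Any ordering ambiguity in the product of root subgroups only changes the coordinates by unipotent (unimodular) polynomial maps, so the measures still agree.

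With this in hand, compute
\[
 (\operatorname{T}_w(\chi)f)(g)=\int_{N_{w_1}}\int_{N_{w_2}} f\bigl(\dot w^{-1}u_1\,\dot w_1 u_2\dot w_1^{-1}g\bigr)\,du_2\,du_1 .
\]
Choose compatible representatives, so that $\dot w=\dot w_1\dot w_2$. Then
\[
 \dot w^{-1}u_1\dot w_1u_2\dot w_1^{-1}=\dot w_2^{-1}\bigl(\dot w_1^{-1}u_1\dot w_1\bigr)u_2\dot w_1^{-1}.
\]
It remains to reorder this so that the inner $N_{w_2}$-integral can be recognized as $\operatorname{T}_{w_2}(\chi)f$. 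The cleanest way is to write $\dot w^{-1}=\dot w_2^{-1}\dot w_1^{-1}$ and use the alternative decomposition
\[
 N_w=\bigl(w_1N_{w_2}w_1^{-1}\bigr)\cdot N_{w_1}.
\]
Then the argument becomes
\[
 \dot w_2^{-1}u_2'\,\dot w_1^{-1}u_1g\qquad\text{with } u_2'\in N_{w_2}.
\]
Integrating over $u_2'$ first gives $(\operatorname{T}_{w_2}(\chi)f)(\dot w_1^{-1}u_1g)$. Integrating this over $u_1\in N_{w_1}$ gives $\operatorname{T}_{w_1}({}^{w_2}\chi)\bigl(\operatorname{T}_{w_2}(\chi)f\bigr)(g)$. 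Fubini is justified by absolute convergence. The convergence region for $w$ is contained in the regions needed for $\operatorname{T}_{w_2}$ at $\chi$ and for $\operatorname{T}_{w_1}$ at ${}^{w_2}\chi$, because the root sets correspond under $w_2$ as in the decomposition above.

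Finally, the identity holds on a nonempty open set of split parameters, with $\chi|_{T\cap C}$ fixed. Both sides are meromorphic in those parameters by \cite[Theorem~4.2]{knapp-stein-II}, so the identity of meromorphic families extends everywhere.

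The main obstacle is bookkeeping with the representatives. One must verify that $\dot w_1\dot w_2$ is the representative attached to $w$, rather than differing by an element of $T\cap C$ (such as $\alpha^\vee(-1)$), which would introduce a character value. One must also check that the signs in $\dot w_1x_\beta(u)\dot w_1^{-1}=x_{w_1\beta}(\pm u)$ do not affect the measure. This is exactly what ``compatible Weyl representatives and product measures'' is meant to guarantee. I would handle it as in \cite[Section 4.1]{shahidi-book}: define $\dot w$ via reduced expressions, so that the representatives are multiplicative on length-additive products, and note that the sign changes are measure-preserving.
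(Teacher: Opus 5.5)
Your proof is correct. It follows the standard route: the length-additive decomposition $N_{w_1w_2}=(w_1N_{w_2}w_1^{-1})\,N_{w_1}$, then the substitution $\dot w^{-1}\dot w_1u_2\dot w_1^{-1}u_1g=\dot w_2^{-1}u_2\dot w_1^{-1}u_1g$ with Fubini in the region of absolute convergence, and finally meromorphic continuation.

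The paper gives no argument of its own and simply refers to \cite[Section~4.1]{shahidi-book}, where essentially this proof appears. There, the representative and measure bookkeeping you flag is absorbed into the hypothesis of compatible representatives and product measures.
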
 

For a simple reflection, write $\operatorname{T}_\alpha^G(\chi)=\operatorname{T}_{s_\alpha}(\chi)$. The same construction applies to principal series of a standard Levi subgroup. We suppress the character when it is fixed by the context.

\subsection{The rank-one model}
\label{sec:rk1-model}
\Cref{lem:standard-factorization} reduces the computation to the operators attached to simple reflections. Put $H=\mathrm{SL}_2(\mathbb C)$, with upper triangular Borel subgroup $B_{H}$ and maximal compact subgroup $C_{H}=\mathrm{SU}(2)$. The root homomorphism in \eqref{eq:rk1-root-map} relates this model to $G$. At a given step of the factorization, let $\chi$ be the normalized inducing character, including its parameter shifts and Weyl conjugations. Its pullback along $\alpha^\vee$ is
\begin{equation}\label{eq:rk1-root-character}
 \psi_\alpha=\chi\circ\alpha^\vee,\qquad
 \psi_\alpha(z)=z^{p_\alpha}\bar z^{q_\alpha},\qquad
 p_\alpha=\langle\nu,\alpha^\vee\rangle,\quad
 q_\alpha=\langle\nu',\alpha^\vee\rangle
 \quad(d\chi=(\nu,\nu')).
\end{equation}
The scalars $p_\alpha,q_\alpha$ are the weights evaluated on the coroot.  To check that the normalized inductions agree, restrict the half-modulus as well.  Since $\alpha$ is simple and $\rho_G$ is the half-sum of the positive roots of $\mathrm G_0$,
\begin{equation}\label{eq:rk1-modulus}
 \delta_B^{1/2}(\alpha^\vee(z))
   =|z|_{\mathbb C}^{\langle\rho_G,\alpha^\vee\rangle}
   =|z|_{\mathbb C}
   =\delta_{B_{H}}^{1/2}(a(z)),\qquad |z|_{\mathbb C}=z\bar z.
\end{equation}
Thus pullback along $\varphi_\alpha$ gives the normalized inducing character $\psi_\alpha$ on $H$, without a further shift.

For a smooth character $\psi(z)=z^p\bar z^q$, with $p-q\in\mathbb Z$, write
\[
 z^p\bar z^q=|z|^{p+q}(z/|z|)^{p-q},
\]
where $|z|$ is the ordinary modulus.  Our normalized principal series is
\begin{equation}\label{eq:rk1-induction}
 I_{H}(\psi)=\left\{f\in C^\infty(H):
 f\left(\begin{pmatrix}z&v\\0&z^{-1}\end{pmatrix}h\right)
    =|z|_{\mathbb C}\psi(z)f(h)\right\},
\end{equation}
with $H$ acting by right translations.  The standard operator $\operatorname{T}_{H}(\psi):I_{H}(\psi)\longrightarrow I_{H}(\psi^{-1})$ is:
\begin{equation}\label{eq:rk1-integral}
 \bigl(\operatorname{T}_{H}(\psi)f\bigr)(h)
    =\int_{\mathbb C} f(w_2^{-1}n(u)h)\,d\mu_{\mathbb C}(u).
\end{equation}
It converges in the usual half-plane $\Re(p+q)>0$ and is otherwise understood by standard meromorphic continuation.  Pullback along $\varphi_\alpha$ gives
\[
 (\operatorname{T}_\alpha^G f)\circ\varphi_\alpha
    =\operatorname{T}_{H}(\psi_\alpha)(f\circ\varphi_\alpha).
\]
This identity compares the pullbacks of the principal-series functions along $\varphi_\alpha$.  Its compatibility with the prescribed cohomological generators is proved separately in \Cref{subsec:main-first-transitivity}.
\vspace{1em} 

\noindent\textbf{\large Test vectors and the analytic scalar}
To compute the action on the lowest compact type, put
\begin{equation}\label{eq:rk1-parameters}
 k=p-q\in\mathbb Z,\qquad D=|k|,\qquad
 A(\psi)=\frac{p+q+D}{2}.
\end{equation}
The compact inducing character is $a(e^{i\vartheta})\mapsto e^{ik\vartheta}$.  The $C_{H}$-type $V_D=\Sym^D(\mathbb C^2)$ occurs with multiplicity one in $I_{H}(\psi)$ and $I_{H}(\psi^{-1})$.  Indeed, its weights are $D,D-2,\ldots,-D$, each with multiplicity one.  The assertion then follows from compact Frobenius reciprocity.  It is the lowest occurring type, and the remaining compact types have highest weights $D+2j$, $j\geq 1$.

For $c=(c_{ij})\in C_{H}$, define
\begin{equation}\label{eq:rk1-test-vectors}
 \begin{array}{c|cc}
     & f_\psi(c) & \widetilde f_\psi(c)\\ \hline
 k\geq0 & c_{11}^{D} & c_{21}^{D}\\
 k<0 & c_{22}^{D} & (-c_{12})^{D}.
 \end{array}
\end{equation}
Extend the two functions by \eqref{eq:rk1-induction} for $\psi$ and $\psi^{-1}$, respectively.  Under the right action, they span extremal weight lines of the same weight for the diagonal torus in $C_H$ and are normalized by
\begin{equation}\label{eq:rk1-point-normalization}
 f_\psi(1)=1,\qquad \widetilde f_\psi(w_2)=1.
\end{equation}
When $k=0$, both functions restrict to the constant function $1$ on $C_H$.

\begin{prop}\label{prop:rk1-scalar}
With the representative and measure in \eqref{eq:rk1-root-map}--\eqref{eq:rk1-measure}, one has the meromorphic identity
\begin{equation}\label{eq:rk1-scalar}
 \operatorname{T}_{H}(\psi)f_\psi=c(\psi)\widetilde f_\psi,
 \qquad
 c(\psi)=\frac{L_{\mathbb C}(0,\psi)}{L_{\mathbb C}(1,\psi)}
       =\frac{\Gamma_{\mathbb C}(A(\psi))}
                    {\Gamma_{\mathbb C}(A(\psi)+1)}
       =\frac{2\pi}{A(\psi)},
\end{equation}
where $\Gamma_{\mathbb C}(v)=2(2\pi)^{-v}\Gamma(v)$ and $L_{\mathbb C}(v,\psi)=\Gamma_{\mathbb C}(v+A(\psi))$.
\end{prop}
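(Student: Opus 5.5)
The plan is to use the multiplicity-one structure to reduce the identity to a single scalar, and then compute that scalar by evaluating one explicit integral at the point $w_2$.

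\emph{Reduction to a scalar.} The operator $\operatorname{T}_{H}(\psi)$ is $H$-equivariant, in particular $C_H$-equivariant. So it maps the $V_D$-isotypic component of $I_{H}(\psi)$ into the $V_D$-isotypic component of $I_{H}(\psi^{-1})$. Both components are irreducible, since $V_D$ occurs with multiplicity one. The operator also commutes with the right action of the diagonal torus of $C_H$. Hence it carries the extremal weight line spanned by $f_\psi$ into the line of the same weight in $I_H(\psi^{-1})$, and that line is spanned by $\widetilde f_\psi$. This gives $\operatorname{T}_{H}(\psi)f_\psi=c(\psi)\widetilde f_\psi$ for some scalar $c(\psi)$. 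By the normalization \eqref{eq:rk1-point-normalization},
\[
c(\psi)=\bigl(\operatorname{T}_H(\psi)f_\psi\bigr)(w_2)=\int_{\C} f_\psi\bigl(w_2^{-1}n(u)w_2\bigr)\,d\mu_\C(u).
\]
I would first work in the half-plane $\Re(p+q)>0$, where the integral converges absolutely. The identity then extends everywhere by meromorphic continuation in the split parameter $p+q$ with $k$ fixed, because both sides are meromorphic.

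\emph{Explicit integral.} The element $w_2^{-1}n(u)w_2$ is the lower unipotent $\bar n(v)=\begin{pmatrix}1&0\\ v&1\end{pmatrix}$ with $v=-u$. Set $r=(1+|v|^2)^{1/2}$. The Iwasawa factorization I will use is
\[
\bar n(v)=\begin{pmatrix} r^{-1} & *\\ 0 & r\end{pmatrix}\cdot\frac1r\begin{pmatrix}1&-\bar v\\ v&1\end{pmatrix}.
\]
The right factor lies in $\mathrm{SU}(2)$, and the diagonal entry of the left factor is the positive real number $z=r^{-1}$. Since $z$ is positive, $\psi(z)=z^{p+q}$ with no phase, and $|z|_\C=r^{-2}$.

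For $k\ge0$ the compact part contributes $c_{11}^D=r^{-D}$, so the integrand is $r^{-2-(p+q)-D}=(1+|v|^2)^{-1-A(\psi)}$. For $k<0$ one uses $c_{22}^D=r^{-D}$ instead, and the integrand is the same. Passing to polar coordinates with the measure $2\,dx\,dy$ gives
\[
c(\psi)=\int_0^\infty(1+t^2)^{-1-A}\,4\pi t\,dt=\frac{2\pi}{A(\psi)},
\]
valid for $\Re A>0$.

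\emph{Matching the $L$-factors.} By \eqref{eq:local_L_factor}, $A(\psi)=\tfrac{p+q+|p-q|}2=\max\{p,q\}$. Hence $L_\C(v,\psi)=\Gamma_\C(v+A)$, and
\[
\frac{L_\C(0,\psi)}{L_\C(1,\psi)}=\frac{\Gamma_\C(A)}{\Gamma_\C(A+1)}=\frac{2\pi\,\Gamma(A)}{\Gamma(A+1)}=\frac{2\pi}{A},
\]
which agrees with the integral.

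The main obstacle I expect is bookkeeping rather than analysis. One has to check that the chosen test functions in \eqref{eq:rk1-test-vectors} really span the same weight line for the right torus action in both representations, including the sign in $(-c_{12})^D$ and the evaluation of $\widetilde f_\psi$ at $w_2$. One also has to confirm that the Iwasawa factor $z$ is genuinely positive real, so that no residual phase $(z/|z|)^k$ survives. I would verify this by computing $f_\psi$ and $\widetilde f_\psi$ on $a(e^{i\vartheta})c$ directly, and by checking the two sign cases $k\ge0$ and $k<0$ separately.
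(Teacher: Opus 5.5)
Your proposal is correct and follows the paper's proof essentially step for step. Both arguments use $C_H$-equivariance and multiplicity one of $V_D$ to reduce to a scalar, evaluate at $w_2$ with the same Iwasawa factorization of $w_2^{-1}n(u)w_2$ (your compact factor with $v=-u$ is exactly the paper's), obtain the integrand $(1+|u|^2)^{-A(\psi)-1}$ and the value $2\pi/A(\psi)$ in polar coordinates, and continue meromorphically via $\psi|\cdot|_{\mathbb C}^{\zeta}$ with $k$ fixed. One cosmetic point: writing $A(\psi)=\max\{p,q\}$ presupposes real exponents, whereas off the real line you should use $A(\psi)=(p+q+|k|)/2$, which is already what the statement's definition $L_{\mathbb C}(v,\psi)=\Gamma_{\mathbb C}(v+A(\psi))$ encodes.
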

\begin{proof}
First suppose $\Re(p+q)>0$.  $C_H$-equivariance and multiplicity one reduce the operator on the test vector to a scalar. By \eqref{eq:rk1-point-normalization}, evaluation at $w_2$ determines this scalar.  Write $\Delta_u=(1+|u|^2)^{1/2}$.  Then
\begin{equation}
 w_2^{-1}n(u)w_2=\begin{pmatrix}1&0\\-u&1\end{pmatrix}=\begin{pmatrix}\Delta_u^{-1}&-\bar u\Delta_u^{-1}\\
                   0&\Delta_u\end{pmatrix}
   \begin{pmatrix}\Delta_u^{-1}&\bar u\Delta_u^{-1}\\
                   -u\Delta_u^{-1}&\Delta_u^{-1}\end{pmatrix}.
   \end{equation}
The second matrix is in $C_{H}$.  In both rows of \eqref{eq:rk1-test-vectors}, $f_\psi$ evaluates to $\Delta_u^{-D}$ on this matrix, while the induction law \eqref{eq:rk1-induction} gives the factor $\Delta_u^{-(p+q+2)}$ from the first matrix. Consequently
\[
 f_\psi(w_2^{-1}n(u)w_2)
      =(1+|u|^2)^{-A(\psi)-1},
\]
and polar coordinates with the measure \eqref{eq:rk1-measure} give
\begin{equation}
 (\operatorname{T}_{H}(\psi)f_\psi)(w_2)=4\pi\int_0^\infty r(1+r^2)^{-A(\psi)-1}\dd r=2\pi\int_0^\infty(1+v)^{-A(\psi)-1}\dd v
   =\frac{2\pi}{A(\psi)}.
\end{equation}
The Gamma quotient in \eqref{eq:rk1-scalar} follows from $\Gamma(v+1)=v\Gamma(v)$. To extend the calculation beyond the region of convergence, vary $\psi_\zeta=\psi|\cdot|_{\mathbb C}^{\zeta}$. Then both $p$ and $q$ increase by $\zeta$, the integer $k$ and the restrictions of the test vectors to $C_H$ stay fixed, and $A(\psi_\zeta)=A(\psi)+\zeta$. The identity therefore continues meromorphically from a right half-plane.
\end{proof}

\begin{rem}\label{rem:rk1-scope}
Formula \eqref{eq:rk1-scalar} determines the intertwining operator on the lowest compact type $V_D$. Even when this scalar has no pole, the formula alone does not exclude poles on other compact types. Before specializing the operator and passing to cohomology, we must therefore establish holomorphy on the whole principal series. \Cref{lem:rk1-regularity} proves this at the cohomological parameters by combining the scalar formula with irreducibility.
\end{rem}
\section{\bf Combinatorial lemma and irreducibility}
\label{sec:prepmainthm}

We first compare Weyl representatives for associate parabolics. Criticality will then determine the highest weight of a coefficient representation with nonzero cohomology and exclude the reducibility obstruction.

\subsection{Weyl groups and associate parabolics}
Let $G$ be as in \Cref{sec:nota}, and let $P=L_PN_P$ be a standard maximal parabolic. Thus $I=\Delta_{res}(G)\setminus\{\alpha_P\}$ is the set of simple roots of $L_P$. Denote the corresponding complex algebraic unipotent radical by $\mathrm N_{P,0}$. Put
\[
 d_P=\#\Phi(P),\qquad
 \Phi(P)=\Phi_0^+(G)\setminus\Phi_0^+(L_P).
\]

\begin{prop}
\label{prop:dimension}
One has
\[
 \dim_{\R}N_P
 =2\dim_{\C}\mathrm N_{P,0}
 =2d_P=2\bigl(q_0(G)-q_0(L_P)\bigr).
\]
\end{prop}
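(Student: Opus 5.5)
The plan is to prove the three equalities in turn; each is a dimension count, and the only input is the root-space decomposition of the nilradical together with the formula for $q_0$ already recorded in \eqref{eq:complex-bottom-degree}.

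\emph{First equality.} The group $N_P$ is the group of real points of $\operatorname{Res}_{\C/\R}\mathrm N_{P,0}$. Hence $N_P=\mathrm N_{P,0}(\C)$, regarded as a real Lie group. A complex manifold of complex dimension $n$ has real dimension $2n$, so $\dim_{\R}N_P=2\dim_{\C}\mathrm N_{P,0}$.

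\emph{Second equality.} The Lie algebra $\operatorname{Lie}_{\C}\mathrm N_{P,0}$ is the direct sum of the root spaces $\mathfrak g_{0,\alpha}$ for $\alpha\in\Phi(P)$, because $\mathrm N_{P,0}$ is generated by the root subgroups of the positive roots outside $L_P$. Each of these root spaces is one-dimensional over $\C$, since $\mathrm G_0$ is reductive. Therefore $\dim_{\C}\mathrm N_{P,0}=\#\Phi(P)=d_P$. Equivalently, on the restricted side each restricted root has real multiplicity $2$, as noted in the description of $\Phi_{\mathrm{res}}(G)$, and this gives $\dim_{\R}N_P=2d_P$ directly.

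\emph{Third equality.} By \eqref{eqn:q-0-L} and the computation following it, $q_0(G)=\#\Phi_0^+(G)$ and $q_0(L_P)=\#\Phi_0^+(L_P)$. The positive roots of $L_P$ are exactly the positive roots of $G$ that lie in the span of $I$. So $\Phi_0^+(L_P)\subset\Phi_0^+(G)$, and the complement is $\Phi(P)$ by definition. Subtracting gives $q_0(G)-q_0(L_P)=d_P$, which is \eqref{eq:complex-bottom-degree}.

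The only point that needs any care is checking the conventions. The quantities $\dim_{\C}\mathrm L_0$ and $\operatorname{rk}$ in \eqref{eqn:q-0-L} must be the complex-group quantities, so that $q_0$ counts positive roots once and not twice. Once this is confirmed against the formula displayed after \eqref{eqn:q-0-L}, the proposition is routine bookkeeping.
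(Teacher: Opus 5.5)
Your proof is correct and follows essentially the same route as the paper. The paper also uses the fact that each root space of $\operatorname{Lie}_{\C}\mathrm N_{P,0}$ is a complex line to get $\dim_{\C}\mathrm N_{P,0}=d_P$, doubles this for the real dimension, and reads off $q_0(G)-q_0(L_P)=d_P$ from the positive-root count recorded in \eqref{eq:complex-bottom-degree}.
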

\begin{proof}
Each root space in $\operatorname{Lie}_{\C}\mathrm N_{P,0}$ is one-dimensional over $\C$, so $\dim_{\C}\mathrm N_{P,0}=d_P$. 

The underlying real unipotent group $N_P$ has dimension $2d_P$. Finally, \eqref{eq:complex-bottom-degree} implies that $q_0(G)-q_0(L_P)=d_P$.
\end{proof}

Lengths in the restricted and absolute Weyl groups use $\Delta_{res}(G)$ and $\Delta_{abs}(G)$, respectively. The absolute Weyl group $W_{abs}(L_P)$ is generated by $\{s_{\ual}:\ual\in\Delta_{abs}(L_P)\}$, where
\[
 \Delta_{abs}(L_P)=\Delta_{abs}(G)\setminus
                    \operatorname{res}_{\mathrm{rt}}^{-1}(\alpha_P).
\]
Write $W_{abs}^P$ for the set of Kostant representatives which are the minimal-length representatives for the cosets $W_{abs}(L_P)\backslash W_{abs}(G)$. Define $W_{res}^P$ similarly for $W_{res}(L_P)\backslash W_{res}(G)$. Their defining inequalities are
\begin{equation}
\label{eq:weyl_P}
\begin{aligned}
 W_{abs}^P
   &=\left\{w\in W_{abs}(G)\mid
       w^{-1}(\ual)>0,\quad\forall\ual\in\Delta_{abs}(L_P)\right\},\\
 W_{res}^P
   &=\left\{w\in W_{res}(G)\mid
       w^{-1}(\alpha)>0,\quad\forall\alpha\in\Delta_{res}(L_P)\right\}.
\end{aligned}
\end{equation}
Under the factorwise identification $W_{abs}(G)\cong W_{res}(G)\times W_{res}(G)$, one has $W_{abs}^P\cong W_{res}^P\times W_{res}^P$; in particular, if $s=(w_1,w_2)\in W_{abs}^P$, then $w_1,w_2\in W_{res}^P$ and $l(s)=l(w_1)+l(w_2)$.

The following involution pairs complementary lengths.
\begin{lemma}
    \label{lem:dual_weyl}
   Let $w_G$ and $w_{L_P}$ be the longest elements in $W_{res}(G)$ and $W_{res}(L_P)$, respectively.  The map
    \[
        w \mapsto w^{\vee} := w_{L_P} w w_{G}
    \]
    is an involution of $W_{res}^{P}$.  For every $w\in W_{res}^{P}$,
    \[
        l(w) + l(w^{\vee}) = d_P.
    \]
\end{lemma}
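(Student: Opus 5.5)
The plan is to reduce everything to standard facts about longest elements and inversion sets in the restricted Weyl group $W_0=W_{res}(G)$. Three inputs will be used throughout. First, $w_G^2=1$ and $w_{L_P}^2=1$. Second, $w_G$ maps $\Phi_0^+$ onto $\Phi_0^-$. Third, $w_{L_P}$ maps $\Phi_0^+(L_P)$ onto $\Phi_0^-(L_P)$ and permutes $\Phi(P)$; the latter holds because $W_{res}(L_P)$ preserves the coefficient of $\alpha_P$ in simple-root expansions. For $w\in W_0$ I will work with the inversion set $N(w^{-1})=\{\alpha>0: w^{-1}\alpha<0\}$, so that $l(w)=\#N(w^{-1})$. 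By \eqref{eq:weyl_P}, the condition $w\in W_{res}^P$ is equivalent to $N(w^{-1})\cap\Phi_0^+(L_P)=\emptyset$, that is, $N(w^{-1})\subseteq\Phi(P)$.

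The first step is to show that $w^\vee$ lies in $W_{res}^P$. Take $\beta\in\Delta_{res}(L_P)$. Then $(w^\vee)^{-1}\beta=w_Gw^{-1}w_{L_P}\beta$, and $w_{L_P}\beta=-\beta'$ for some $\beta'\in\Delta_{res}(L_P)$, since $-w_{L_P}$ permutes the simple roots of $L_P$. As $w\in W_{res}^P$, we have $w^{-1}\beta'>0$, so $w^{-1}w_{L_P}\beta=-w^{-1}\beta'<0$. Applying $w_G$ then gives a positive root, hence $(w^\vee)^{-1}\beta>0$, which is the membership criterion. The involution property is immediate: $(w^\vee)^\vee=w_{L_P}w_{L_P}ww_Gw_G=w$. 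In particular the map is a bijection of $W_{res}^P$.

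For the length identity, I will compute $l(w^\vee)$ directly from the inversion set. For a positive root $\alpha$, put $\gamma=w_{L_P}\alpha$. Then $\alpha\in N((w^\vee)^{-1})$ exactly when $w_Gw^{-1}\gamma<0$, which is equivalent to $w^{-1}\gamma>0$. Split according to whether $\alpha$ lies in $\Phi_0^+(L_P)$ or in $\Phi(P)$. If $\alpha\in\Phi_0^+(L_P)$, then $\gamma\in\Phi_0^-(L_P)$, so $-\gamma\in\Phi_0^+(L_P)$. Since $N(w^{-1})\subseteq\Phi(P)$, we get $w^{-1}(-\gamma)>0$, hence $w^{-1}\gamma<0$, and so $\alpha$ is never an inversion. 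If $\alpha\in\Phi(P)$, then $\gamma\in\Phi(P)$ is positive, and $\alpha$ is an inversion of $(w^\vee)^{-1}$ if and only if $\gamma\notin N(w^{-1})$. Because $\alpha\mapsto w_{L_P}\alpha$ is a bijection of $\Phi(P)$, this gives
\[
l(w^\vee)=\#\bigl(\Phi(P)\setminus N(w^{-1})\bigr)=d_P-l(w),
\]
where the last equality uses $N(w^{-1})\subseteq\Phi(P)$.

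There is no serious obstacle in this argument. The one point requiring care is the convention: \eqref{eq:weyl_P} defines $W^P_{res}$ through $w^{-1}$, so the relevant sets are the inversion sets $N(w^{-1})$ rather than $N(w)$, and the fact that $w_{L_P}$ stabilizes $\Phi(P)$ has to be cited at that point. I will also verify that $w_{L_P}ww_G$ is multiplied in the order consistent with this left-coset convention; the computation above confirms that the given order is the correct one.
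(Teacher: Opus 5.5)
Your proof is correct. The membership and involution steps are essentially the paper's own argument; the difference lies in how you prove the length identity.

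The paper works with lengths only. Minimality of $w$ gives $l(w_{L_P}w)=l(w_{L_P})+l(w)$. The longest-element identity $l(xw_G)=l(w_G)-l(x)$, applied to $x=w_{L_P}w$, then gives $l(w^\vee)=l(w_G)-l(w_{L_P})-l(w)=d_P-l(w)$. The only root-theoretic input is $l(w_G)-l(w_{L_P})=\#\Phi(P)$.

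You instead compute the inversion set directly and obtain
\[
N\bigl((w^\vee)^{-1}\bigr)=w_{L_P}\bigl(\Phi(P)\setminus N(w^{-1})\bigr).
\]
This needs one extra fact, that $w_{L_P}$ stabilizes $\Phi(P)$, and you justify it correctly. In return, your argument is self-contained: it does not invoke the two standard length facts, which are themselves proved by inversion-set counting of this kind. It also gives more information than the bare length count, since it identifies which roots are inversions.

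One small redundancy: your first case ($\alpha\in\Phi_0^+(L_P)$ is never an inversion of $(w^\vee)^{-1}$) already shows $w^\vee\in W_{res}^P$. So in your route the separate membership step can be dropped.
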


\begin{proof}
For $\alpha\in\Delta_{res}(L_P)$, the root $w_{L_P}^{-1}(\alpha)$ is a negative Levi simple root.  Minimality of $w$ therefore requires $w^{-1}(w_{L_P}^{-1}(\alpha))<0$.  Since $w_G^{-1}$ sends negative roots to positive roots,
\[
 (w^\vee)^{-1}(\alpha)=w_G^{-1}(w^{-1}(w_{L_P}^{-1}(\alpha)))>0.
\]
Thus $w^\vee\in W_{res}^P$.  Both longest elements are involutions, so $(w^\vee)^\vee=w$.

Minimality makes length additive under multiplication on the left by elements of $W_{res}(L_P)$, so $l(w_{L_P}w)=l(w_{L_P})+l(w)$.  Applying the longest-element identity to $w_{L_P}w$, we obtain
\[
 l(w^\vee)=l(w_{L_P}ww_G)
 =l(w_G)-l(w_{L_P})-l(w)=d_P-l(w),
\]
because $l(w_G)-l(w_{L_P})=\#\Phi(P)=d_P$.
\end{proof}

\begin{cor}
    \label{cor:longest_Wp}
    Let $w^{P}$ be the longest element in $W_{res}^P$. Then
    \[
       w^{P}=w_{L_P}w_{G}.
    \]
\end{cor}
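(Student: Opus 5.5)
The plan is to derive the corollary directly from \Cref{lem:dual_weyl}, with no new combinatorics. First I check that the identity element $e$ lies in $W_{res}^P$. This is clear from \eqref{eq:weyl_P}: $e^{-1}(\alpha)=\alpha>0$ for every $\alpha\in\Delta_{res}(L_P)$, and $e$ is evidently the minimal-length representative of the coset $W_{res}(L_P)$. Applying the involution of \Cref{lem:dual_weyl} gives
\[
 e^\vee=w_{L_P}\,e\,w_G=w_{L_P}w_G\in W_{res}^P,
 \qquad l(w_{L_P}w_G)=d_P-l(e)=d_P .
\]

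Second, I show that $d_P$ is the maximal length in $W_{res}^P$. For any $w\in W_{res}^P$, \Cref{lem:dual_weyl} gives $l(w)=d_P-l(w^\vee)\le d_P$, since lengths are nonnegative. Equality $l(w)=d_P$ forces $l(w^\vee)=0$, that is, $w^\vee=e$. Because the map is an involution, this gives $w=(w^\vee)^\vee=e^\vee=w_{L_P}w_G$.

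Hence $w_{L_P}w_G$ is the unique element of $W_{res}^P$ of maximal length $d_P$, and therefore $w^P=w_{L_P}w_G$. The only point to keep straight is that ``longest'' is taken with respect to the restricted simple reflections. Uniqueness of the longest element is then a byproduct of the argument above, not an assumption. I see no real obstacle here: the entire content sits in the length identity $l(w)+l(w^\vee)=d_P$ already proved in \Cref{lem:dual_weyl}.
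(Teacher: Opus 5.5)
Your proposal is correct and matches the paper's proof: apply \Cref{lem:dual_weyl} to the identity to get $w_{L_P}w_G\in W_{res}^P$ of length $d_P$, then use $l(w)=d_P-l(w^\vee)$ to see that this is the unique element of maximal length. Your explicit use of the involution property to pass from $w^\vee=e$ to $w=e^\vee$ just spells out a step the paper leaves implicit.
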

\begin{proof}
    Apply \Cref{lem:dual_weyl} to the identity element $\mathrm{Id}\in W_{res}^P$, which has length zero:
    \[
       \mathrm{Id}^{\vee}=w_{L_P}w_G\in W_{res}^P,\qquad
       l(w_{L_P}w_G)=d_P.
    \]
    For every $w\in W_{res}^P$, the lemma shows that $l(w)\leq d_P$, with equality precisely when $w^\vee=1$. Therefore one has $w=w_{L_P}w_G$.
\end{proof}

We shall distinguish this Kostant representative from its inverse:
\[
    w^{P}:=w_{L_P}w_G,\qquad
    w_P:=(w^{P})^{-1}=w_Gw_{L_P}.
\]
Thus $w^{P}$ is the longest minimal representative for $W_{res}(L_P)\backslash W_{res}(G)$, and $w_P$ is the element used in the standard intertwining operator from induction along $P$ to induction along the associate parabolic defined below. In general $w^{P}\neq w_P$.

We call two standard parabolic subgroups associate if their Levi subgroups are Weyl-conjugate.  The associate $Q$ used below is the standard parabolic determined by
\[
    \Delta_{res}(L_Q)=-w_G\bigl(\Delta_{res}(L_P)\bigr).
\]

\begin{cor}
    \label{cor:associate_parabolic}
    One has
    \[
       w_P\bigl(\Delta_{res}(L_P)\bigr)=\Delta_{res}(L_Q),
       \qquad
       w^{P}\bigl(\Delta_{res}(L_Q)\bigr)=\Delta_{res}(L_P).
    \]
\end{cor}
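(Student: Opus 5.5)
The plan is to compute $w_P(\alpha)$ for $\alpha\in\Delta_{res}(L_P)$ directly from the two factors $w_G$ and $w_{L_P}$. The standard facts used are these. The longest element $w_{L_P}$ of $W_{res}(L_P)$ permutes the positive roots of $L_P$ to the negative roots of $L_P$. Consequently $-w_{L_P}$ maps $\Delta_{res}(L_P)$ bijectively onto itself: it preserves $\Phi^+(L_P)$ and is additive, so it sends indecomposable elements to indecomposable ones. Likewise $-w_G$ maps $\Delta_{res}(G)$ bijectively onto itself, by the same argument applied to $G$.

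For the first identity, take $\alpha\in\Delta_{res}(L_P)$. Then $-w_{L_P}(\alpha)=\beta$ for some $\beta\in\Delta_{res}(L_P)$, so
\[
 w_P(\alpha)=w_G\bigl(w_{L_P}(\alpha)\bigr)=w_G(-\beta)=-w_G(\beta).
\]
By the definition of $Q$, $-w_G(\beta)\in\Delta_{res}(L_Q)$. Hence $w_P$ maps $\Delta_{res}(L_P)$ into $\Delta_{res}(L_Q)$. The map is injective, and both sets have the same cardinality because $-w_G$ is a bijection of simple roots. This gives $w_P(\Delta_{res}(L_P))=\Delta_{res}(L_Q)$.

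The second identity follows by applying $w^P=w_P^{-1}$ to the first. Alternatively, one can check it directly: for $\gamma=-w_G(\beta)\in\Delta_{res}(L_Q)$,
\[
 w^P(\gamma)=w_{L_P}w_G(-w_G\beta)=-w_{L_P}(\beta)\in\Delta_{res}(L_P),
\]
using that $w_G$ is an involution.

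The argument has no genuine obstacle. The only point needing care is the standard fact that $-w_{L_P}$ permutes $\Delta_{res}(L_P)$. If desired, I would justify it via the characterization of $w_{L_P}$ as the unique element of $W_{res}(L_P)$ sending the positive system $\Phi^+(L_P)$ to its negative, so that the simple system is carried to the simple system of $-\Phi^+(L_P)$. I would also note that $Q$ is well defined as a standard parabolic because $-w_G(\Delta_{res}(L_P))\subset\Delta_{res}(G)$.
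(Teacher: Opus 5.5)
Your proof is correct and follows the paper's argument. Both use $w_{L_P}(\Delta_{res}(L_P))=-\Delta_{res}(L_P)$ to obtain $w_P(\Delta_{res}(L_P))=-w_G(\Delta_{res}(L_P))=\Delta_{res}(L_Q)$, and then apply $w^P=w_P^{-1}$ for the second identity; your elementwise version with the cardinality count is just a more detailed rendering of the same set-level computation.
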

\begin{proof}
    Since $w_{L_P}(\Delta_{res}(L_P))=-\Delta_{res}(L_P)$, one has $$w_P(\Delta_{res}(L_P))=w_G(w_{L_P}(\Delta_{res}(L_P)))=-w_G(\Delta_{res}(L_P))=\Delta_{res}(L_Q).$$
    The second identity follows by applying $w_P^{-1}=w^{P}$.
\end{proof}

For a representation $\tau$ of $L_P$, its transport to a representation of $L_Q$ is
\[
 {}^{w_P}\tau(l_Q)=\tau(w_P^{-1}l_Qw_P).
\]
For a character $\xi$ of $L_P$, we write ${}^{w_P}\xi$ for its transport, defined by the same formula. This notation applies in particular to $\delta_P$.

\begin{lemma}
    \label{lem:associate_weyl}
    The map
    \[
        w\longmapsto w':=w_Pw
    \]
    defines a bijection from $W_{res}^{P}$ to $W_{res}^{Q}$. Moreover, for every $w\in W_{res}^{P}$,
    \[
        l(w)+l(w')=d_P.
    \]
\end{lemma}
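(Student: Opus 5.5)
We prove that $w\mapsto w'=w_Pw$ maps $W_{res}^P$ into $W_{res}^Q$, construct its inverse, and then compute lengths using Lemma~\ref{lem:dual_weyl}.

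\emph{Well-definedness.} Let $w\in W_{res}^P$ and $\beta\in\Delta_{res}(L_Q)$. By Corollary~\ref{cor:associate_parabolic}, $w_P^{-1}(\beta)=w^P(\beta)=\alpha$ for some $\alpha\in\Delta_{res}(L_P)$. Hence
\[
 (w')^{-1}(\beta)=w^{-1}w_P^{-1}(\beta)=w^{-1}(\alpha)>0,
\]
because $w$ is a minimal representative, by \eqref{eq:weyl_P}. So $w'\in W_{res}^Q$.

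\emph{Bijectivity.} Apply the same argument with the roles of $P$ and $Q$ exchanged. The associate of $Q$ is $P$, since $-w_G$ is an involution on simple roots. The analogue of Corollary~\ref{cor:associate_parabolic} for $Q$ uses $w_Q=w_Gw_{L_Q}$. Because $w_{L_Q}=w_Gw_{L_P}w_G$ (conjugation by $w_G$ carries $L_P$ to $L_Q$), we get
\[
 w_Q=w_G\,w_Gw_{L_P}w_G=w_{L_P}w_G=w^P=w_P^{-1}.
\]
Thus $v\mapsto w_Qv$ maps $W_{res}^Q$ into $W_{res}^P$, and it is a two-sided inverse of $w\mapsto w_Pw$. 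Alternatively, injectivity is obvious, and surjectivity follows from $\#W_{res}^P=\#W_{res}(G)/\#W_{res}(L_P)=\#W_{res}^Q$, since $L_P$ and $L_Q$ are conjugate.

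\emph{Length identity.} Write
\[
 w'=w_Gw_{L_P}w=w_G\,(w_{L_P}ww_G)\,w_G=w_G\,w^\vee\,w_G.
\]
Conjugation by $w_G$ permutes $\Delta_{res}(G)$, because $-w_G$ preserves the simple roots. It is therefore a length-preserving automorphism of $W_{res}(G)$. Hence $l(w')=l(w^\vee)=d_P-l(w)$ by Lemma~\ref{lem:dual_weyl}, which is the claim.

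As a consistency check, one can verify directly that $w_Gw^\vee w_G\in W_{res}^Q$. The element $w^\vee$ lies in $W_{res}^P$, and conjugation by $w_G$ sends $\Delta_{res}(L_P)$ to $\Delta_{res}(L_Q)$ up to the sign absorbed by $-w_G$.

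The only mildly delicate step is the identity $w_{L_Q}=w_Gw_{L_P}w_G$ used to get $w_Q=w_P^{-1}$. It holds because $w_G$ conjugates the Weyl group of $L_P$ onto that of $L_Q$, and longest elements correspond under this length-preserving isomorphism. If one prefers to avoid it, the counting argument for bijectivity suffices.
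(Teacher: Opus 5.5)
Your proof is correct. The well-definedness and inverse-map steps are exactly the paper's; your explicit identification $w_Q=w_Gw_{L_Q}=w_{L_P}w_G=w_P^{-1}$ just spells out the paper's ``same argument with $w_P^{-1}$''. For the length identity you take a slightly different route: you rewrite $w'=w_G\,w^\vee\,w_G$ and combine Lemma~\ref{lem:dual_weyl} with the fact that conjugation by $w_G$ preserves length. The paper instead computes directly $l(w_Gw_{L_P}w)=l(w_G)-l(w_{L_P}w)=l(w_G)-l(w_{L_P})-l(w)$ from the minimality of $w$. These are the same ingredients that prove Lemma~\ref{lem:dual_weyl}, so the two routes are equivalent; yours reuses the earlier lemma at the cost of the extra, standard conjugation-invariance fact.
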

\begin{proof}
    By \Cref{cor:associate_parabolic},
    \[
       (w')^{-1}\bigl(\Delta_{res}(L_Q)\bigr)
       =w^{-1}\bigl(w_P^{-1}(\Delta_{res}(L_Q))\bigr)
       =w^{-1}\bigl(\Delta_{res}(L_P)\bigr)>0.
    \]
    Hence $w'\in W_{res}^{Q}$, and the same argument with $w_P^{-1}$ gives the inverse bijection.

    Since $w$ is a minimal representative for $W_{res}(L_P)\backslash W_{res}(G)$,
    \[
       l(w_{L_P}w)=l(w_{L_P})+l(w).
    \]
    Applying the longest-element identity to $w_{L_P}w$, we obtain
    \[
       l(w_Pw)
       =l(w_Gw_{L_P}w)
       =l(w_G)-l(w_{L_P})-l(w)
       =d_P-l(w).
    \]
\end{proof}

For $\ulw=(w_1,w_2)\in W_{abs}^P$, define $\ulw'=(w'_1, w'_2)$, i.e.,
\[
   \ulw':=(w_Pw_1,w_Pw_2)=\jmath(w_P)\ulw.
\]
\Cref{lem:associate_weyl} and \Cref{prop:dimension} give
\[
   l(\ulw')=2d_P-l(\ulw)=\dim_{\R}(N_P)-l(\ulw).
\]
In particular, if $l(\ulw)=d_P$, then $l(\ulw')=d_P=l(\ulw)$.

\subsection{The combinatorial lemma}
\label{subsec:combinatorial_lemma}
Write $\mathrm P_0=\mathrm L_{P,0}\mathrm N_{P,0}$ for the complex algebraic parabolic corresponding to $P=L_PN_P$.  We identify the complex root systems $\Phi_0(G)$ and $\Phi_0(L_P)$ with the corresponding restricted systems. \Cref{prop:dimension} indicates that
\[
 d_P=\#\Phi(P)=\dim_{\mathbb C}\mathrm{N}_{P,0}
                =\tfrac12\dim_{\mathbb R}N_P.
\]
The half-sums $\rho_G$ and $\rho_{L_P}$ count each complex root once, and $\underline\rho_G=(\rho_G,\rho_G)$. Write $W_0=W_{\rm res}(G)$, and retain $w_G,w_{L_P}$ and the minimal \emph{left}-coset representatives from \eqref{eq:weyl_P}:
\[
 W_{\rm res}^P
 =\{w\in W_0:w^{-1}(\Phi_0^+(L_P))\subset\Phi_0^+(G)\},
 \qquad W_{\rm abs}^P=W_{\rm res}^P\times W_{\rm res}^P.
\]
\vspace{1em}

\noindent\textbf{\large The coefficient representation and the inducing character.}
Let $\underline\mu$ be the dominant integral highest weight of an algebraic coefficient representation of $L_P$ satisfying the purity condition of \Cref{lem:purity}. Using the rational derived and central decomposition of \Cref{cor:finite_dim_repn}, write
\begin{equation}
\label{eq:comb-full-coefficient}
 \underline\mu=
   (\mu+\mathsf w,\,-w_{L_P}(\mu)+\mathsf w')
       \in X^*(\mathrm{T}_{\mathbb C})=\Lambda\oplus\Lambda.
\end{equation}
Here $\mu$ is dominant for $L_P$, and $\mathsf w,\mathsf w'$ vanish on the coroots of $L_P$. The full weight in \eqref{eq:comb-full-coefficient} is integral and includes the central character. Its derived and central summands need not lie in $\Lambda$.

Let $\tau$ be the essentially tempered representation of $L_P$, cohomological with respect to $V_{\ulm}$, from \Cref{prop:principal-series}. Its normalized realization $\tau=I_{L_P}(\chi_{\ulm})$ has parameter \eqref{eq:complex-tempered-parameter}, which we write as
\begin{equation}
\label{eq:comb-unshifted-parameter}
 \mathrm{d}\chi_{\ulm}=\underline\nu=(-w_{L_P}(\mu)+\rho_{L_P}-\mathsf w,\ w_{L_P}(\mu)-\rho_{L_P}-\mathsf w').
\end{equation}
Pairing these weights with $\alpha^\vee$, we get the scalar exponents for a root $\alpha$. Recall \eqref{eq:parabolic-parameter-normalization} and \Cref{def:point_of_evaluation}:
\[
 s_0=-\langle\rho_P,\alpha_P^\vee\rangle,
 \qquad \rho_P=-s_0\gamma_P.
\]
For the induced family \eqref{eq:parabolic-modulus-family} with a twisting character, set $\chi_{\ulm,s}=\chi_{\ulm}\cdot(\delta_P^{-s/(2s_0)}|_T)$. Equations~\eqref{eq:parabolic-induction-in-stages} and \eqref{eq:parabolic-twist-differential} give
\[
 I(\tau, s)=\Ind_P^G(\tau,\delta_P^{-s/(2s_0)})\simeq I(\chi_{\ulm,s}), d\chi_{\ulm,s}=(\nu+s\gamma_P,\nu'+s\gamma_P).
\]
At the evaluation point, $I(\tau, s_0)={}^a\Ind_P^G(\tau)$ by \eqref{eq:parabolic-family-law}. Since $s_0\gamma_P=-\rho_P$, the parameter formula at $s=s_0$ becomes
\begin{equation}
\label{eq:comb-evaluation-parameter}
 \dd\chi_{\ulm, s_0}=(\nu-\rho_P,\ \nu'-\rho_P)=(-w_{L_P}(\mu)-\mathsf w+\rho_{L_P}-\rho_P, w_{L_P}(\mu)-\rho_G-\mathsf w').
\end{equation}
\vspace{0.5 em}
To simplify the notation in what follows, set
\begin{equation}
\label{eq:comb-xi}
 \xi=-w_{L_P}(\mu)-\mathsf w+\rho_{L_P}-\rho_P,\quad \xi'  =w_{L_P}(\mu)-\rho_G-\mathsf w'.
\end{equation}
Then $\dd\chi_{\ulm, s_0}=(\xi, \xi'  )$.
\vspace{1em}

\noindent\textbf{\large Criticality and the lattice condition.}
For $\alpha\in\Phi(P)$ put $j=\langle\gamma_P,\alpha^\vee\rangle$ and write $\alpha\in\Phi_j(P)$.  Thus
\begin{equation}
\label{eq:s_0}
 -js_0=\langle\rho_P,\alpha^\vee\rangle.
\end{equation}
Let $V_j=r_j\circ\varphi_\tau$ be the Weil representation as in \eqref{eq:L_j}, and recall:
\begin{equation}
\label{eq:c_alpha}
 c_\alpha =\max\{\langle\nu,\alpha^\vee\rangle,
                         \langle\nu',\alpha^\vee\rangle\},\quad  c_\alpha^\vee =\max\{-\langle\nu,\alpha^\vee\rangle,
                         -\langle\nu',\alpha^\vee\rangle\}.
\end{equation}
Then \eqref{eq:L_j} and \eqref{eq:local_L_factor} give
\[
 L_{\mathbb C}(u,V_j)=\prod_{\alpha\in\Phi_j(P)}
                  \Gamma_{\mathbb C}(u+c_\alpha),\qquad
 L_{\mathbb C}(u,V_j^\vee)=\prod_{\alpha\in\Phi_j(P)}
                  \Gamma_{\mathbb C}(u+c_\alpha^\vee).
\]
The superscript $\vee$ here denotes the dual Weil representation, not passage to the representation induced from $Q$.

Rewriting \eqref{eq:comb-xi} makes the lattice condition explicit:
\begin{equation}
\label{eq:comb-xi-lattice}
 \xi=-w_{L_P}(\mu+\mathsf w+\rho_G),\qquad
 -\xi' =-w_{L_P}(\mu)+\mathsf w'+\rho_G.
\end{equation}
Indeed, $w_{L_P}(\rho_{L_P})=-\rho_{L_P}$, $w_{L_P}(\rho_P)=\rho_P$, and $w_{L_P}(\mathsf w)=\mathsf w$. Since \eqref{eq:comb-full-coefficient} is an algebraic weight and $\rho_G$ is integral on coroots, both $\xi$ and $\xi'$ take integer values on every coroot of $G_0$. Thus \eqref{eq:s_0}, \eqref{eq:c_alpha}, and \eqref{eq:comb-xi} give, for $\alpha\in\Phi_j(P)$,
\begin{equation}
\label{eq:comb-integral-gamma-arguments}
 \begin{aligned}
 js_0+c_\alpha
 &=\max\{\langle\xi,\alpha^\vee\rangle,
                  \langle\xi' ,\alpha^\vee\rangle\}\in\mathbb Z,\\
 -js_0+c_\alpha^\vee
 &=\max\{-\langle\xi,\alpha^\vee\rangle,
                   \langle-\xi' ,\alpha^\vee\rangle\}\in\mathbb Z.
 \end{aligned}
\end{equation}

\begin{lemma}[\bf Combinatorial lemma]
\label{lem:combinatorial_lemma}
Suppose that $s_0$ is hyper-critical in the sense of \Cref{def:critical_set}. In other words, $js_0$ and $1+js_0$ are both critical for $V_j$ for every $j$ occurring in the graded adjoint representation, ensuring that $L_{\mathbb C}(u,V_j)$ and $L_{\mathbb C}(1-u,V_j^\vee)$ are finite at both $u=js_0$ and $u=1+js_0$. Then
\begin{equation}
\label{eq:critical_condition}
 \langle\xi,\alpha^\vee\rangle
 \langle-\xi' ,\alpha^\vee\rangle>0
 \qquad(\alpha\in\Phi_0(G)).
\end{equation}
There is a unique $w_\xi\in W_{\rm res}^P$ such that both $w_\xi^{-1}(\xi)$ and $-w_\xi^{-1}(\xi')$ are strictly dominant. Put
\begin{equation}
\label{eq:comb-kostant-representative}
 w_\xi^\vee=w_{L_P}w_\xi w_G,\qquad
 \ulw_{\xi}=(w_\xi^\vee,w_\xi).
\end{equation}
Then $\ulw_{\xi}\in W_{\rm abs}^P$ and
\begin{equation}
\label{eq:comb-conclusion}
 \begin{aligned}
 &\ell_{\rm abs}(\ulw_{\xi})=d_P=\tfrac12\dim_{\mathbb R}N_P,\\
 &\underline\lambda:=\ulw_{\xi}^{-1}
        (\underline\mu+\underline\rho_G)-\underline\rho_G
 \quad\text{is dominant and algebraic.}
 \end{aligned}
\end{equation}
\end{lemma}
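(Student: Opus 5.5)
The plan is to translate hyper-criticality into sign conditions on the integers $a_\alpha=\langle\xi,\alpha^\vee\rangle$ and $b_\alpha=\langle\xi',\alpha^\vee\rangle$, which are integers by \eqref{eq:comb-integral-gamma-arguments}. Regularity and the chamber structure then produce $w_\xi$, and the remaining claims are Weyl-group bookkeeping based on \Cref{lem:dual_weyl}. The first step is the one I expect to require the most care, because the two criticality points $js_0$ and $1+js_0$ play different roles and a sign slip is easy.

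\emph{Step 1: the sign condition \eqref{eq:critical_condition}.} By the symmetry $\alpha\mapsto-\alpha$ it suffices to treat positive roots. Take $\alpha\in\Phi_j(P)$ first. Since $\Gamma_{\mathbb C}$ has poles exactly at $-\mathbb N$ and all the arguments are integers, finiteness of a product $\prod\Gamma_{\mathbb C}(u+c)$ means that every argument $u+c$ is $\geq1$. By \eqref{eq:comb-integral-gamma-arguments}, finiteness of $L_{\mathbb C}(js_0,V_j)$ gives $\max\{a_\alpha,b_\alpha\}\geq1$. Finiteness of $L_{\mathbb C}(1-(1+js_0),V_j^\vee)=L_{\mathbb C}(-js_0,V_j^\vee)$ gives $\max\{-a_\alpha,-b_\alpha\}\geq1$, that is, $\min\{a_\alpha,b_\alpha\}\leq-1$. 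The two remaining finiteness conditions are implied by these two. Thus $a_\alpha$ and $b_\alpha$ are nonzero integers of opposite signs, which says $a_\alpha\langle-\xi',\alpha^\vee\rangle>0$.

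For $\alpha\in\Phi_0^+(L_P)$ I would compute directly. The weights $\rho_P$, $\mathsf w$ and $\mathsf w'$ pair to zero with coroots of $L_P$, and $\rho_G=\rho_{L_P}+\rho_P$. Hence \eqref{eq:comb-xi} gives
\[
 \langle\xi,\alpha^\vee\rangle=\langle-\xi',\alpha^\vee\rangle=\langle-w_{L_P}(\mu)+\rho_{L_P},\alpha^\vee\rangle.
\]
This is strictly positive, because $-w_{L_P}(\mu)$ is $L_P$-dominant.

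\emph{Step 2: existence of $w_\xi$ and $w_\xi\in W^P_{\rm res}$.} Step 1 shows that $\xi$ and $-\xi'$ are regular and lie on the same side of every root hyperplane. Hence they lie in one open Weyl chamber, and the unique $w_\xi\in W_0$ that carries the dominant chamber onto it makes both $w_\xi^{-1}(\xi)$ and $-w_\xi^{-1}(\xi')$ strictly dominant. Uniqueness follows from simple transitivity of $W_0$ on chambers.

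To see that $w_\xi\in W^P_{\rm res}$, take $\alpha\in\Phi_0^+(L_P)$. Step 1 gives $0<\langle\xi,\alpha^\vee\rangle=\langle w_\xi^{-1}\xi,w_\xi^{-1}\alpha^\vee\rangle$. Since $w_\xi^{-1}\xi$ is strictly dominant, this forces $w_\xi^{-1}(\alpha)>0$, which is the defining condition \eqref{eq:weyl_P}.

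\emph{Step 3: Kostant representative, length, and dominance.} By \Cref{lem:dual_weyl}, $w_\xi^\vee\in W^P_{\rm res}$ and $l(w_\xi^\vee)+l(w_\xi)=d_P$. Hence $\ulw_\xi\in W^P_{\rm abs}=W^P_{\rm res}\times W^P_{\rm res}$, and its absolute length is $d_P=\tfrac12\dim_{\mathbb R}N_P$ by \Cref{prop:dimension}.

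For dominance, \eqref{eq:comb-xi-lattice} gives
\[
 \mu+\mathsf w+\rho_G=-w_{L_P}(\xi),\qquad -w_{L_P}(\mu)+\mathsf w'+\rho_G=-\xi'.
\]
Since $(w_\xi^\vee)^{-1}=w_Gw_\xi^{-1}w_{L_P}$, the first component becomes
\[
 (w_\xi^\vee)^{-1}(-w_{L_P}\xi)=-w_G\bigl(w_\xi^{-1}\xi\bigr),
\]
which is strictly dominant because $-w_G$ preserves the dominant chamber. The second component is $w_\xi^{-1}(-\xi')$, which is strictly dominant by Step 2. Both components pair integrally with all coroots, by \eqref{eq:comb-integral-gamma-arguments}. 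Subtracting $\rho_G$, whose pairing with each simple coroot is $1$, therefore leaves dominant weights.

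Algebraicity follows from the identity $w(x+\rho_G)-\rho_G=w(x)+(w\rho_G-\rho_G)$. Here $w\rho_G-\rho_G$ lies in the root lattice, and $w(x)\in\Lambda$ whenever $x\in\Lambda$. Applying this to each component of the integral weight $\underline\mu$ shows that $\underline\lambda\in\Lambda\oplus\Lambda$.
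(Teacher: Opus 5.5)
Your proposal is correct and follows the paper's proof essentially step for step. You use the same two finiteness conditions, $L_{\mathbb C}(js_0,V_j)$ and the dual value $L_{\mathbb C}(-js_0,V_j^\vee)$ at $u=1+js_0$, to make $\langle\xi,\alpha^\vee\rangle$ and $\langle\xi',\alpha^\vee\rangle$ nonzero of opposite signs on $\Phi(P)$, and you compute the Levi roots directly. The remaining steps, namely the common-chamber argument, the minimality check, \Cref{lem:dual_weyl}, and the rewriting $\underline\mu+\underline\rho_G=(-w_{L_P}(\xi),-\xi')$, also match the paper.

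One small citation point: the integrality of the individual pairings $\langle\xi,\alpha^\vee\rangle$ and $\langle\xi',\alpha^\vee\rangle$, which you need in the dominance step, comes from the lattice rewriting \eqref{eq:comb-xi-lattice}. By contrast, \eqref{eq:comb-integral-gamma-arguments} only records that their maxima are integers.
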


\begin{proof}
We first use hyper-criticality to show that $\xi$ and $-\xi'$ have the same nonzero sign on every coroot. This will place the two weights in a common Weyl chamber and determine $w_\xi$; we can then use the complementary-length involution to construct $\ulw_\xi$ and check the dominance and algebraicity of its coefficient weight.

Fix a root $\alpha\in\Phi_j(P)$. Of the finiteness conditions supplied by hyper-criticality, we use $L_{\mathbb C}(js_0,V_j)$ at the first critical point and $L_{\mathbb C}(-js_0,V_j^\vee)$ at the second: the latter is the dual value $L_{\mathbb C}(1-u,V_j^\vee)$ with $u=1+js_0$. Since $\Gamma_{\mathbb C}$ has no zeros, a pole in an individual root factor cannot cancel in either product. Thus the factors $\Gamma_{\mathbb C}(js_0+c_\alpha)$ and $\Gamma_{\mathbb C}(-js_0+c_\alpha^\vee)$ are both finite. Their arguments are integers by \eqref{eq:comb-integral-gamma-arguments}, and $\Gamma_{\mathbb C}$ has a pole at every nonpositive integer, so these arguments must be strictly positive:
\begin{equation}
\label{eq:comb-critical-bounds}
 \begin{array}{rcll}
 c_\alpha&>&-js_0
    &\text{from criticality at }js_0,\\
 c_\alpha^\vee&>&js_0
    &\text{from dual finiteness at }1+js_0.
 \end{array}
\end{equation}
Together, the two inequalities place $-js_0$ in the open interval
\begin{equation}
\label{eq:comb-double-bound}
 c_\alpha>-js_0>-c_\alpha^\vee.
\end{equation}

By \eqref{eq:c_alpha}, the endpoints of this interval are the larger and smaller of $\langle\nu,\alpha^\vee\rangle$ and $\langle\nu',\alpha^\vee\rangle$. Passing to the parameter at $s_0$ adds $js_0$ to both evaluations, because \eqref{eq:s_0} and \eqref{eq:comb-evaluation-parameter} identify them as
\[
 \langle\xi,\alpha^\vee\rangle
    =\langle\nu,\alpha^\vee\rangle+js_0,\qquad
 \langle\xi',\alpha^\vee\rangle
    =\langle\nu',\alpha^\vee\rangle+js_0.
\]
Consequently, zero lies strictly between $\langle\xi,\alpha^\vee\rangle$ and $\langle\xi',\alpha^\vee\rangle$, so the two possible orders are
\begin{equation}
\label{eq:comb-case-positive}
 \langle\xi,\alpha^\vee\rangle
 >0>\langle\xi',\alpha^\vee\rangle
\end{equation}
or
\begin{equation}
\label{eq:comb-case-negative}
 \langle\xi',\alpha^\vee\rangle
 >0>\langle\xi,\alpha^\vee\rangle.
\end{equation}
In either case, $\xi$ and $-\xi'$ have nonzero evaluations of the same sign on $\alpha^\vee$, which proves \eqref{eq:critical_condition} for every $\alpha\in\Phi(P)$.

For $\alpha\in\Phi_0^+(L_P)$, the sign is already determined by the coefficient weight. The central weights $\mathsf w,\mathsf w'$ and the parabolic half-sum $\rho_P$ vanish on $\alpha^\vee$, while $-w_{L_P}(\mu)$ is dominant for $L_P$ and $\rho_{L_P}$ is strictly positive on every positive coroot of $L_P$. Thus \eqref{eq:comb-xi} reads
\begin{equation}
\label{eq:comb-Levi-sign}
 \langle\xi,\alpha^\vee\rangle
 =\langle-\xi',\alpha^\vee\rangle
 =\langle-w_{L_P}(\mu)+\rho_{L_P},\alpha^\vee\rangle>0.
\end{equation}
We have therefore proved \eqref{eq:critical_condition} for every positive root, and the assertion for negative roots follows by negating both evaluations.

These signs show that $\xi$ and $-\xi'$ avoid every root hyperplane and lie in the same open Weyl chamber. The Weyl group acts simply transitively on the chambers of the derived root space and fixes the central directions \cite[Chapter~II, Section~6]{knapp3}, so there is a unique $w_\xi\in W_0$ taking the dominant chamber to this common chamber. By construction, both $w_\xi^{-1}(\xi)$ and $w_\xi^{-1}(-\xi')$ are strictly dominant. To verify that $w_\xi$ is also a minimal left-coset representative for the Weyl group of the Levi subgroup, take a positive root $\alpha$ of $L_P$ and use \eqref{eq:comb-Levi-sign}:
\[
 0<\langle\xi,\alpha^\vee\rangle
   =\langle w_\xi^{-1}(\xi),
                 (w_\xi^{-1}(\alpha))^\vee\rangle.
\]
A strictly dominant weight has negative evaluation on every negative coroot, so the positivity on the right forces $w_\xi^{-1}(\alpha)>0$. This is precisely the condition in \eqref{eq:weyl_P} for $w_\xi\in W_{\rm res}^P$.

We can now apply \Cref{lem:dual_weyl}, which associates to $w_\xi$ the element $w_\xi^\vee=w_{L_P}w_\xi w_G\in W_{\rm res}^P$ of complementary length. Since $W_{\rm abs}^P$ is the product of two copies of $W_{\rm res}^P$, the pair $\ulw_\xi=(w_\xi^\vee,w_\xi)$ belongs to $W_{\rm abs}^P$, and its absolute length is the sum of the two restricted lengths:
\[
 \ell_{\rm abs}(\ulw_\xi)
   =\ell_{\rm res}(w_\xi^\vee)+\ell_{\rm res}(w_\xi)
   =d_P=\tfrac12\dim_{\mathbb R}N_P.
\]

It remains to check that the coefficient weight in \eqref{eq:comb-conclusion} is dominant and algebraic. To express it in terms of the strictly dominant weights just constructed, first use \eqref{eq:comb-xi-lattice} to rewrite $\underline\mu+\underline\rho_G$ as
\[
 \underline\mu+\underline\rho_G
    =\bigl(-w_{L_P}(\xi),-\xi'\bigr).
\]
Since $(w_\xi^\vee)^{-1}=w_Gw_\xi^{-1}w_{L_P}$, applying $\ulw_\xi^{-1}$ to this pair and subtracting $\underline\rho_G$ expresses the coefficient as
\begin{equation}
\label{eq:dominant_wt}
 \underline\lambda=
 \bigl(-w_G(w_\xi^{-1}(\xi))-\rho_G,
             w_\xi^{-1}(-\xi')-\rho_G\bigr).
\end{equation}
The weights $w_\xi^{-1}(\xi)$ and $w_\xi^{-1}(-\xi')$ are integral on coroots by \eqref{eq:comb-xi-lattice}, so strict dominance means that each of their simple-coroot evaluations is an integer at least $1$. The map $-w_G$ permutes the simple coroots, and therefore the first component before subtracting $\rho_G$ has the same property. Since $\langle\rho_G,\alpha^\vee\rangle=1$ for every simple root, subtracting $\rho_G$ leaves nonnegative simple-coroot evaluations in both components of \eqref{eq:dominant_wt}. Hence $\underline\lambda$ is dominant.

For algebraicity, we must also verify that both components belong to the character lattice $\Lambda$ for the given isogeny form of $G_0$, or equivalently that $\underline\lambda\in\Lambda\oplus\Lambda$. Using the definition of the coefficient, write
\[
 \underline\lambda
   =\ulw_\xi^{-1}(\underline\mu)
      +\bigl(\ulw_\xi^{-1}(\underline\rho_G)
                         -\underline\rho_G\bigr).
\]
The first summand belongs to $\Lambda\oplus\Lambda$, because $\underline\mu$ is the full algebraic coefficient weight and the Weyl group preserves the character lattice. For the second summand, each simple reflection changes $\rho_G$ by a root, so $w(\rho_G)-\rho_G$ belongs to the root lattice for every $w\in W_0$. The second summand therefore lies in the absolute root lattice, which is contained in $\Lambda\oplus\Lambda$, even when $\rho_G$ itself is not a character. This proves that $\underline\lambda$ is algebraic.

Finally, both components of $\underline\mu+\underline\rho_G$ are regular by \eqref{eq:comb-xi-lattice} and \eqref{eq:critical_condition}. Any absolute Weyl element producing a dominant coefficient must, under its inverse, send both of these components to the strictly dominant chamber, since adding $\underline\rho_G$ to a dominant weight makes it strictly dominant on coroots. There is a unique such chamber representative for each regular component, so this property also characterizes $\ulw_\xi$ uniquely.
\end{proof}

We now identify the Kostant representative contributing to the bottom degree cohomology of the same representation $I(\tau,s_0)\simeq I(\chi_{\ulm, s_0})$, now viewed as a principal series.

The Kostant representative $\ulw_{\xi}$ in \eqref{eq:comb-conclusion} indexes the coefficient representation $V_{\underline\mu}^{L_P}$ in degree $\ell_{\rm abs}(\ulw_{\xi})=\#\Phi(P)$ of the cohomology of the complexified Lie algebra of $N_P$. The remaining cohomology for $L_P$ begins in degree $q_0(L_P)$. By \Cref{prop:principal-series}, realizing $\tau$ as a principal series induced from $B\cap L_P$ selects the Kostant representative $(w_{L_P},1)$. Consequently, the Kostant representative contributing to the cohomology of $I(\tau,s_0)$ when it is realized as a principal series induced from $B$ is
\begin{equation}
\label{eq:comb-Borel-element}
 \underline w_{\chi_{\ulm, s_0}}=(w_{L_P},1)\ulw_{\xi}
                  =(w_\xi w_G,w_\xi)\in W_{\rm abs}(G).
\end{equation}
Minimality of $\ulw_{\xi}\in W_{\rm abs}^P$ makes this product length-additive. Using \eqref{eq:comb-conclusion} and \eqref{eq:complex-bottom-degree}, we obtain
\[
 \ell_{\rm abs}(\underline w_{\chi_{\ulm, s_0}})
 =q_0(L_P)+\ell_{\rm abs}(\ulw_{\xi})
 =q_0(L_P)+\#\Phi(P)=q_0(G).
\]

For $\underline w\in W_{\rm abs}(G)$, we write the twisted Weyl action as
\[
 \underline w\cdot\underline\lambda
   :=\underline w(\underline\lambda+\underline\rho_G)-\underline\rho_G.
\]
Equations~\eqref{eq:comb-evaluation-parameter} and \eqref{eq:dominant_wt} give the first identity below. This definition and the formula for $d(\delta_B^{1/2})$ in \eqref{eq:parabolic-modulus-differential} give the equivalent second identity:
\[d\chi_{\ulm, s_0}
   =-\underline w_{\chi_{\ulm, s_0}}
                       (\underline\lambda+\underline\rho_G), \quad d(\delta_B^{1/2}\chi_{\ulm, s_0})
   +\underline w_{\chi_{\ulm, s_0}}\cdot\underline\lambda=0.
\]
In Borel--Wallach's cohomology formula \cite[III.3.3(ii)]{borel-wallach}, the inducing character $\delta_B^{1/2}\chi_{\ulm, s_0}$ is tensored with the one-dimensional summand of nilradical cohomology having weight $\underline w_{\chi_{\ulm, s_0}}\cdot\underline\lambda$. The last identity says that these torus characters cancel. Regularity of $\underline\lambda+\underline\rho_G$ makes this the unique summand with that property. Their tensor product contributes in degree zero to the cohomology of the torus, giving a class in degree $q_0(G)$ for $G$. This is the summand used in \Cref{subsec:main-first-Delorme} to normalize the generator in bottom-degree cohomology.

\begin{prop}
\label{prop:associated}
Let $Q$ be the standard associate parabolic of $P$, let $w_P=w_Gw_{L_P}$, and set $\ulw_{\xi}'=\jmath(w_P)\ulw_{\xi}$, where $\jmath(w)=(w,w)$. Then $\ulw_{\xi}'\in W_{\rm abs}^Q$ and $\ell_{\rm abs}(\ulw_{\xi}')=d_P$.  The weight
\begin{equation}
\label{eq:comb-associate-coefficient}
 \tilde{\ulm}:=\ulw_{\xi}'\cdot\underline\lambda=\jmath(w_P)(\underline\mu+\underline\rho_G)
                        -\underline\rho_G=\jmath(w_P)(\underline\mu)-2\underline\rho_Q
\end{equation}
is a dominant integral highest weight for $L_Q$ satisfying the purity condition.
\end{prop}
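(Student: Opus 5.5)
The plan has four steps: membership of $\ulw_{\xi}'$ in $W_{\rm abs}^Q$ and its length, the two algebraic identities for $\tilde{\ulm}$, then dominance and integrality, then purity.

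\textbf{Membership and length.} Since $\ulw_\xi=(w_\xi^\vee,w_\xi)$ with both components in $W^P_{\rm res}$, we have $\ulw_\xi'=(w_Pw_\xi^\vee,w_Pw_\xi)$. By \Cref{lem:associate_weyl}, each component lies in $W^Q_{\rm res}$, so $\ulw_\xi'\in W^Q_{\rm abs}$. The same lemma gives
\[
\ell_{\rm abs}(\ulw_\xi')=(d_P-l(w_\xi^\vee))+(d_P-l(w_\xi))=2d_P-d_P=d_P,
\]
where the last step uses $l(w_\xi)+l(w_\xi^\vee)=d_P$ from \Cref{lem:dual_weyl}.

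\textbf{The two identities.} The first equality in \eqref{eq:comb-associate-coefficient} is immediate from the definition of $\underline\lambda$ in \eqref{eq:comb-conclusion}:
\[
\ulw_\xi'(\underline\lambda+\underline\rho_G)=\jmath(w_P)\ulw_\xi\ulw_\xi^{-1}(\underline\mu+\underline\rho_G).
\]
For the second equality, it suffices to show $w_P(\rho_G)-\rho_G=-2\rho_Q$. Write $\rho_G=\rho_{L_P}+\rho_P$. Then
\[
w_P(\rho_{L_P})=w_G w_{L_P}\rho_{L_P}=-w_G\rho_{L_P}=\rho_{L_Q},
\]
because $-w_G$ carries $\Delta(L_P)$ to $\Delta(L_Q)$ by the definition of $Q$ and so maps positive roots of $L_P$ to positive roots of $L_Q$. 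Similarly, $w_P\rho_P=w_G\rho_P=-\rho_Q$, since $w_{L_P}$ fixes $\rho_P$ and $-w_G$ maps $\Phi(P)$ onto $\Phi(Q)$. Hence
\[
w_P\rho_G-\rho_G=\rho_{L_Q}-\rho_Q-\rho_{L_Q}-\rho_Q=-2\rho_Q.
\]

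\textbf{Dominance and integrality.} Integrality holds because $\jmath(w_P)$ preserves $\Lambda\oplus\Lambda$, and $2\rho_Q$ is a sum of roots. For $L_Q$-dominance, take $\beta\in\Delta(L_Q)$. Then $2\rho_Q$ vanishes on $\beta^\vee$, since $2\rho_Q$ extends to a character of $L_Q$, so
\[
\langle\tilde{\ulm},(\beta^\vee,0)\rangle=\langle\underline\mu,(w_P^{-1}\beta)^\vee\rangle,
\]
and similarly for the second component. By \Cref{cor:associate_parabolic}, $w_P^{-1}\beta=w^P\beta\in\Delta(L_P)$, and $\underline\mu$ is $L_P$-dominant, so both evaluations are $\geq0$. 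This step is the place to be careful: the identity must be applied to simple roots of $L_Q$, not just to positive roots, which is exactly what \Cref{cor:associate_parabolic} supplies.

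\textbf{Purity.} Write $\underline\mu=(\mu+\mathsf w,-w_{L_P}\mu+\mathsf w')$ as in \eqref{eq:comb-full-coefficient}. Then the derived parts of $\tilde{\ulm}$ are $\tilde\mu=w_P\mu$ and $\tilde\mu'=-w_Pw_{L_P}\mu$. The shift $-2\rho_Q$ and the transported central weights $w_P\mathsf w,w_P\mathsf w'$ are central for $L_Q$: they vanish on the coroots of $L_Q$ because their $w_P^{-1}$-transports vanish on the coroots of $L_P$. It remains to check $\tilde\mu'=-w_{L_Q}\tilde\mu$. The longest element of $W(L_Q)$ is $w_{L_Q}=w_Pw_{L_P}w_P^{-1}$, since conjugation by $w_P$ carries the simple system of $L_P$ to that of $L_Q$. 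Therefore
\[
-w_{L_Q}\tilde\mu=-w_Pw_{L_P}\mu=\tilde\mu',
\]
which is the purity condition of \Cref{lem:purity} for $L_Q$. The main bookkeeping risk is the treatment of the central parts, so each equality in the identities step should be tested against the classical cases of \Cref{exam:maximal}.
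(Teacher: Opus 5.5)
Your proposal is correct and follows the paper's proof essentially step by step. Both get membership and length from \Cref{lem:associate_weyl} (with \Cref{lem:dual_weyl}), the identity from $w_P\rho_{L_P}=\rho_{L_Q}$ and $w_P\rho_P=-\rho_Q$, dominance from \Cref{cor:associate_parabolic} plus the vanishing of $\rho_Q$ on $L_Q$-coroots, and purity from $w_{L_Q}=w_Pw_{L_P}w_P^{-1}$. The only cosmetic difference is that the paper checks purity by showing that the first component plus $w_{L_Q}$ applied to the second equals the central weight $w_P(\mathsf w+\mathsf w')-4\rho_Q$, whereas you compare the derived parts directly.
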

\begin{proof}
By \Cref{cor:associate_parabolic}, $w_P$ carries the simple roots in $L_P$ to those of $L_Q$. \Cref{lem:associate_weyl} then implies that $w\mapsto w_Pw$ from $W_{\rm res}^P$ to $W_{\rm res}^Q$ with
\[
 \ell(w_Pw)=\ell(w_Gw_{L_P}w)=d_P-\ell(w)
                  \qquad(w\in W_{\rm res}^P).
\]
Apply this to both components of $\ulw_{\xi}$ and use \eqref{eq:comb-conclusion} to obtain the degree. Since
\[w_P\rho_{L_P}=\rho_{L_Q}, \ w_P\rho_P=-\rho_Q,\]
 one has $w_P\rho_G-\rho_G=-2\rho_Q$, which proves \eqref{eq:comb-associate-coefficient}.

The two components of $\tilde{\ulm}$ are $L_Q$-dominant: $w_P$ carries $L_P$-dominance to $L_Q$-dominance, and $\rho_Q$ vanishes on all $L_Q$-coroots. Their algebraicity follows from the formula for the twisted action. For purity put $w_{L_Q}=w_Pw_{L_P}w_P^{-1}$.  The sum of the first component and the image of the second under $w_{L_Q}$ equals
\[
 w_P(\mathsf w+\mathsf w')-4\rho_Q,
\]
which vanishes on all coroots in the Levi $L_Q$. Thus the derived part satisfies the purity relation of \Cref{lem:purity}, and the remainder is central.
\end{proof}

\subsection{Irreducibility of the induced representations}
\label{sec:irreducibility-principal-series}
Retain the hypotheses of \Cref{lem:combinatorial_lemma} and the normalized inducing parameter from \eqref{eq:comb-evaluation-parameter}:
\begin{equation}
\label{eq:irr-parameter}
 \dd\chi_{\ulm, s_0}=\underline\nu_{s_0}=(\xi, \xi')
                                  \in\mathfrak t^*.
\end{equation}
To identify the associate target, recall that the definition of $Q$ shows that $\alpha_Q=-w_G(\alpha_P)$ and $\rho_Q=-w_G(\rho_P)$. Weyl invariance of the root--coroot pairing then indicates that 
\[
 \langle\rho_Q,\alpha_Q^\vee\rangle
      =\langle\rho_P,\alpha_P^\vee\rangle=-s_0.
\]
Together with $w_P(\rho_P)=-\rho_Q$, we have the corresponding identities for the normalized weights and modulus characters of \eqref{eq:parabolic-parameter-normalization} and \eqref{eq:parabolic-modulus}:
\[
 w_P(\gamma_P)=-\gamma_Q,\qquad
 {}^{w_P}\delta_P=\delta_Q^{-1}.
\]
Here $\gamma_Q$ is defined by \eqref{eq:parabolic-parameter-normalization} with $P$ and $L_P$ replaced by $Q$ and $L_Q$, respectively. The twist in \eqref{eq:parabolic-modulus-family} is therefore transported to $\delta_Q^{s/(2s_0)}$, whose value at $s=s_0$ is $\delta_Q^{1/2}$. We denote the fixed target by $\widetilde\Pi$. The source $\Pi$ from \Cref{subsec:parabolic_induction} and this target have realizations as normalized principal series induced from $B$
\begin{equation}
\label{eq:irr-endpoints}
 \begin{aligned}
 \Pi
   &=\Ind_P^G(\tau,\delta_P^{-1/2})\simeq I(\chi_{\ulm, s_0}),\\
 \widetilde\Pi
   &:=\Ind_Q^G({}^{w_P}\tau,\delta_Q^{1/2})
       \simeq I({}^{w_P}\chi_{\ulm, s_0}).
 \end{aligned}
\end{equation}

\begin{prop}[Irreducibility of the principal series]
  \label{prop:irreducibility_principal_series}
Under the hypotheses of \Cref{lem:combinatorial_lemma}, $I({}^{w}\chi_{\ulm, s_0})$ is irreducible for every $w\in W_{\rm res}(G)$. In particular, both representations $\Pi=I(\chi_{\ulm,s_0})$  and $\widetilde{\Pi}=I({}^{w_P}\chi_{\ulm, s_0})$ are irreducible.
\end{prop}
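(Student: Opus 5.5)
We apply the Speh--Vogan reducibility criterion for principal series of complex groups \cite[Theorem~6.19]{speh-vogan}. This is the same criterion used in the proof of \Cref{prop:principal-series}, now applied to $G$. For a character $\chi$ of $T$ with $d\chi=(\nu,\nu')$, it reads as follows: $I(\chi)$ is reducible if and only if there is a root $\alpha\in\Phi_0$ with
\[
 p_\alpha=\langle\nu,\alpha^\vee\rangle\in\Z\setminus\{0\},\qquad
 q_\alpha=\langle\nu',\alpha^\vee\rangle\in\Z\setminus\{0\},\qquad
 p_\alpha q_\alpha>0.
\]
Equivalently, both absolute coroots $(\alpha^\vee,0)$ and $(0,\alpha^\vee)$ take nonzero integer values of the same sign. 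Irreducibility of every $I({}^{w}\chi_{\ulm,s_0})$ therefore reduces to checking that, for every root, these two pairings are either not both nonzero integers or have opposite signs.

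First we treat the case $w=1$, where $d\chi_{\ulm,s_0}=(\xi,\xi')$ by \eqref{eq:irr-parameter}. By \eqref{eq:comb-xi-lattice} both $\xi$ and $\xi'$ are integral on every coroot. The combinatorial lemma, in the form \eqref{eq:critical_condition}, gives
\[
 \langle\xi,\alpha^\vee\rangle\,\langle-\xi',\alpha^\vee\rangle>0
 \qquad(\alpha\in\Phi_0(G)).
\]
Hence $\langle\xi,\alpha^\vee\rangle$ and $\langle\xi',\alpha^\vee\rangle$ are nonzero integers of opposite sign, so their product is negative. This excludes every reducibility condition, and $I(\chi_{\ulm,s_0})$ is irreducible.

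For general $w\in W_{\rm res}(G)$ we have $d({}^{w}\chi_{\ulm,s_0})=(w\xi,w\xi')$, so
\[
 \langle w\xi,\alpha^\vee\rangle=\langle\xi,(w^{-1}\alpha)^\vee\rangle,\qquad
 \langle w\xi',\alpha^\vee\rangle=\langle\xi',(w^{-1}\alpha)^\vee\rangle.
\]
Since $w^{-1}\alpha$ ranges over $\Phi_0$ as $\alpha$ does, the opposite-sign condition for every root is $W$-invariant. The same argument therefore applies, and every $I({}^{w}\chi_{\ulm,s_0})$ is irreducible. Alternatively, the $I({}^{w}\chi)$ share composition factors, which again gives irreducibility for all $w$ once one is irreducible. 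The case $w=w_P$ gives $\widetilde\Pi$ through the realization \eqref{eq:irr-endpoints}.

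The main point requiring care is stating the Speh--Vogan criterion correctly in the present normalization. One must check that the relevant integers are exactly the pairings of $d\chi$ with the absolute coroots, with no $\underline\rho_G$ shift, since the induction is normalized. One must also confirm that reducibility requires the same sign, not opposite signs. We would check this against the rank-one model $H=\SL_2(\C)$ of \Cref{sec:rk1-model}. There $I_H(z^p\bar z^q)$ is reducible exactly when $p,q$ are nonzero integers of the same sign, with a finite-dimensional subquotient of dimension $pq$. This matches the usage in \Cref{prop:principal-series}, where opposite signs gave irreducibility.
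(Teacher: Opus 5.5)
Your proof is correct and follows the paper's argument: you apply the Speh--Vogan criterion \cite[Theorem~6.19]{speh-vogan} to the normalized parameter $(\xi,\xi')$, use \eqref{eq:critical_condition} to see that $\langle\xi,\alpha^\vee\rangle$ and $\langle\xi',\alpha^\vee\rangle$ are nonzero of opposite signs for every root, and extend to all $w$ by Weyl invariance. The paper states the obstruction as $\langle\uln_{s_0},\beta^\vee\rangle>0>\langle\uln_{s_0},(\theta\beta)^\vee\rangle$ for an absolute root $\beta$, which is equivalent to your same-sign condition on $p_\alpha,q_\alpha$ because $\theta\alpha^{(1)}=-\alpha^{(2)}$; note that only the necessary direction of the criterion is used, so you do not need the full ``if and only if''.
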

\begin{proof}
To prove irreducibility, we apply the necessary reducibility condition in \cite[Theorem~6.19, p.~292]{speh-vogan} directly to these normalized principal series. Our connected complex reductive group $G$ satisfies the group hypotheses of \cite[Definition~2.1]{speh-vogan}. For the source $I(\chi_{\ulm,s_0})$, put
\[
 \uln_c=\operatorname{pr}_c\uln_{s_0},\qquad
 \uln_A=\operatorname{pr}_A\uln_{s_0},
\]
where $\uln_{s_0}=(\xi,\xi')$ is the parameter in \eqref{eq:irr-parameter}, and the compact and split projections are defined in \eqref{eq:Cartan-zero-extended-projections}.

Apply Speh--Vogan with their parabolic $P=MAN$ equal to our $B=T_cA^\circ N_B$. Their Cartan subgroup $H$ and maximal compact subgroup $K$ are our $T$ and $C$. Since $M=T_c$ is a compact torus, their discrete-series datum is the character $\chi_{\ulm,s_0}|_{T_c}$. Their character of $A$ is $\chi_{\ulm,s_0}|_{A^\circ}$. At the level of differentials, extended by zero to $\mathfrak t$, the parameter correspondence is
\[
 \begin{array}{c|c}
  \text{Speh--Vogan}&\text{our notation}\\ \hline
  \lambda&\uln_c\\
  \nu&\uln_A\\
  \gamma=(\lambda,\nu)&\uln_c+\uln_A=\uln_{s_0}.
 \end{array}
\]
A character of the compact torus $T_c$ is unitary and square-integrable. To identify the compact and split parameters, consider the principal series $I(\chi)$ and put
\[
 \dd\chi=\ule=(\eta,\eta')\in\mathfrak t^*.
\]
Write $\ule_c$ and $\ule_A$ for the complex-linear extensions of $\ule|_{\mathfrak t_{c,0}}$ and $\ule|_{\mathfrak a_0}$ by zero on the other summand. Formula~\eqref{eq:Cartan-zero-extended-projections} implies that
\begin{equation}\label{eq:rk1-cartan-projections}
 \begin{aligned}
 \ule_c&=\frac{\ule+\theta\ule}{2}
       =\left(\frac{\eta-\eta'}2,-\frac{\eta-\eta'}2\right),\\
 \ule_A&=\frac{\ule-\theta\ule}{2}
       =\left(\frac{\eta+\eta'}2,\frac{\eta+\eta'}2\right),
       \qquad \ule=\ule_c+\ule_A.
 \end{aligned}
\end{equation}
Here $\theta(\eta,\eta')=(-\eta',-\eta)$ by \eqref{eq:theta-paired-weights}. If $\mathcal J$ denotes the original complex structure on the real Lie algebra of the complex torus, \eqref{eq:paired-character-differential} reads
\[
 \ule(H+\mathcal JH')
   =(\eta+\eta')(H)+i(\eta-\eta')(H'),\qquad H,H'\in\mathfrak a_0.
\]
Specializing to $\ule=\uln_{s_0}=(\xi,\xi')$ from \eqref{eq:irr-parameter}, we have
\begin{equation}\label{eq:rk1-cartan-evaluation}
 \uln_c=\left(\frac{\xi-\xi' }{2},-\frac{\xi-\xi' }{2}\right),
 \qquad
 \uln_A=\left(\frac{\xi+\xi'  }{2},\frac{\xi+\xi'  }{2}\right).
\end{equation}

The factors $1/2$ in \eqref{eq:rk1-cartan-evaluation} arise when the compact and split restrictions are represented in doubled coordinates by extending them by zero on the complementary summand. These extensions preserve the values of the original restrictions.

Just to avoid confusion, one can harmlessly identify our symmetric pairing $(\, ,\,)_{\rm abs}$ from \eqref{eq:absolute-symmetric-form} with the pairing in Speh--Vogan \cite[p.~231]{speh-vogan}, where they used the notation $\langle \, , \, \rangle$. The relation between our notations $(\, ,\,)_{abs}$ and $\langle \, , \, \rangle$ is explained in formula~\eqref{eq:root-coroot-pairing-comparison}.

Now, let's first take $w=\mathrm{Id}\in W_{res}(G)$. By \eqref{eq:absolute_roots}, every absolute root is $\alpha^{(1)}=(\alpha,0)$ or $\alpha^{(2)}=(0,\alpha)$, and \eqref{eq:theta-paired-weights} implies that
\[
 \theta\alpha^{(1)}=-\alpha^{(2)},\qquad
 \theta\alpha^{(2)}=-\alpha^{(1)}.
\]
Thus all absolute roots are complex. By \cite[Theorem~6.19]{speh-vogan}, reducibility would therefore require an absolute root $\beta$ with
\begin{equation}
\label{eq:irr-SV-signs}
 \langle\uln_{s_0},\beta^\vee\rangle>0,\qquad
 \langle\uln_{s_0},(\theta\beta)^\vee\rangle<0.
\end{equation}
Here \eqref{eq:root-coroot-pairing-comparison} converts their symmetric-pairing inequalities to coroot pairings.

For either $\beta=\alpha^{(1)}$ or $\beta=\alpha^{(2)}$, the parameter \eqref{eq:irr-parameter} satisfy
\[
 \langle\uln_{s_0},\beta^\vee\rangle
 \langle\uln_{s_0},(\theta\beta)^\vee\rangle
 =\langle\xi,\alpha^\vee\rangle
  \langle-\xi',\alpha^\vee\rangle>0
\]
by \eqref{eq:critical_condition}. This excludes \eqref{eq:irr-SV-signs}, so $I(\chi_{\ulm,s_0})$ is irreducible.

For arbitrary $w\in W_{\rm res}(G)$, the parameter of $I({}^{w}\chi_{\ulm,s_0})$ is $(w(\xi),w(\xi'))$ by \eqref{eq:absolute-weyl-diagonal}. Weyl invariance implies that
\[
 \langle w(\xi),\alpha^\vee\rangle
 \langle-w(\xi'),\alpha^\vee\rangle
 =\langle\xi,(w^{-1}(\alpha))^\vee\rangle
  \langle-\xi',(w^{-1}(\alpha))^\vee\rangle>0.
\]
The same criterion therefore proves irreducibility for every $w$. When $w=w_P$, the proof gives the irreducibility of $\Pi$ and $\widetilde{\Pi}$. 
\end{proof}

\begin{rem}
The two adjacent critical arguments enter the strict bounds in different ways, as shown in \eqref{eq:comb-critical-bounds}. Irreducibility follows from the resulting signs in \eqref{eq:critical_condition}. Regularity and nonvanishing at $s_0$ of a particular meromorphic standard intertwiner remain separate analytic assertions.  Once these are established, its nonzero value is a map between irreducible modules and hence an isomorphism.
\end{rem}
\section{\bf Main theorem}
\label{sec:mainthm}

\subsection{The arithmetic theorem}
\label{subsec:mt3-main}
We now determine the standard intertwining operator on prescribed rational lines in bottom-degree cohomology. Retain the hypotheses and inducing data of \Cref{sec:irreducibility-principal-series}. We use $q_0(G)$, as in \eqref{eq:complex-bottom-degree}, for the bottom cohomological degree for $G$. Recall the family of characters $\chi_{\ulm,s}$ of $T$ defined in \Cref{subsec:parabolic_induction}, with differential \eqref{eq:parabolic-twist-differential}. These characters induce the normalized principal series $I(\chi_{\ulm,s})$. At the point $s_0$ of \Cref{def:point_of_evaluation}, we use the fixed characters
\[
 \chi=\chi_{\ulm}\cdot(\delta_P^{-1/2}|_T),\qquad
 \widetilde\chi={}^{w_P}\chi.
\]
Thus $\chi$ differs from $\chi_{\ulm}$ by the twist $\delta_P^{-1/2}|_T$ required at $s_0$. Write $\chi_s=\chi_{\ulm,s}$ for the varying family, so $\chi=\chi_{s_0}$. The source and target representations in \eqref{eq:irr-endpoints} are
\begin{equation}
\label{eq:mt3-endpoints}
 \begin{aligned}
 \Pi
   &=\Ind_P^G(\tau,\delta_P^{-1/2})
     ={}^a\Ind_P^G(\tau)
       \simeq I(\chi),\\
 \widetilde\Pi
   &=\Ind_Q^G({}^{w_P}\tau,\delta_Q^{1/2})={}^a\Ind_Q^G({}^{w_P}\tau\otimes\delta_Q)
       \simeq I(\widetilde\chi).
\end{aligned}
\end{equation}
In the representation $\widetilde\Pi$, the inducing twist $\delta_Q^{1/2}$ combines with the factor $\delta_Q^{1/2}$ in normalized induction \eqref{eq:normalized-parabolic-induction} to give $\delta_Q$ in unnormalized induction. We use the root coordinates, Weyl representatives, and Haar measures of \eqref{eq:rk1-root-map}--\eqref{eq:rk1-measure}. For the Weyl element $w_P=w_Gw_{L_P}$ defined after \Cref{cor:longest_Wp}, define
\[
 \operatorname{T}_{\mathrm{st}}(s)
   :=\operatorname{T}_{w_P}(\chi_s):
       I(\chi_s)\longrightarrow I({}^{w_P}\chi_s).
\]
Under \eqref{eq:parabolic-induction-in-stages}, these are the parabolic representations
\[
 \Ind_P^G(\tau,\delta_P^{-s/(2s_0)})
 \quad\text{and}\quad
 \Ind_Q^G({}^{w_P}\tau,\delta_Q^{s/(2s_0)}).
\]
Here the transported Levi realization is \({}^{w_P}\tau\simeq I_{L_Q}({}^{w_P}\chi_{\ulm})\). Since $w_P$ carries the positive Levi roots of $P$ to those of $Q$ and $\Phi(P)$ to $-\Phi(Q)$, its integration subgroup is $N_{w_P}=N_Q$. For $F\in \Pi=I_{P}^G(\tau, \delta_P^{-1/2})$, the function $f(g)=F(g)(1)\in I(\chi)$, the integral \eqref{eq:inter_op} therefore becomes
\begin{equation}
\label{eqn:standard_inter_op}
 (\operatorname{T}_{\mathrm{st}}(s)f)(g)
   =\left(\int_{N_Q}F(w_P^{-1}ug)\,du\right)(1).
\end{equation}
Under the evaluation maps of \eqref{eq:parabolic-induction-in-stages}, this is the standard intertwining integral between the representations induced from $P$ and $Q$. The equality holds initially in the region of convergence and then by meromorphic continuation.

For each principal series $I\left(\chi^{(a)}\right)$ appearing in the factorization of $T_{st}(s_0)$, including $\Pi$ and $\widetilde{\Pi}$, we choose the generator $[J_{\chi^{(a)}}]$ of the one-dimensional space $H^{q_0(G)}(\g,K;I\left(\chi^{(a)}\right)\otimes V_{\ulb})$ by \eqref{eq:main-first-inverse}, independently of the intertwining operators. The construction is given in \Cref{subsec:main-first-Delorme}. We can now state the main theorem.

\begin{thm}
\label{thm:mainthm}
Assume that $s_0$ is hyper-critical as in \Cref{lem:combinatorial_lemma}. Then the family $\operatorname{T}_{\mathrm{st}}(s)$ is holomorphic at $s=s_0$, 
and its value
\[
 \operatorname{T}_{\mathrm{st}}(s_0):\Pi\xrightarrow{\ \sim\ }\widetilde\Pi
\]
is an isomorphism of smooth irreducible representations. It induces an isomorphism on relative Lie algebra cohomology with coefficients in 
$V_{\ulb}$ in every degree.
The two cohomology groups in bottom degree $q_0(G)$ are one-dimensional. On the generators $[J_{\chi}]$ and $[J_{\tilde{\chi}}]$ defined 
by \eqref{eq:main-first-inverse}, one has
\begin{equation}
\label{eq:main-arithmetic-scalar}
 \operatorname{T}_{\mathrm{st}}^\bullet(s_0)([J_{\chi}])
   =a\left(\prod_j
       \frac{L_{\mathbb C}(js_0,\tau,r_j)}
            {L_{\mathbb C}(1+js_0,\tau,r_j)}\right)
       [J_{\tilde{\chi}}]
\end{equation}
for some $a\in\mathbb Q^\times$. 
\end{thm}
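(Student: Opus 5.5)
The proof reduces $\operatorname{T}_{\mathrm{st}}(s)$ to rank one. Choose a reduced expression $w_P=s_{\alpha_1}\cdots s_{\alpha_d}$ with $d=d_P=\ell(w_P)$. By \Cref{lem:standard-factorization}, for $s$ in the region of convergence and then meromorphically,
\[
 \operatorname{T}_{\mathrm{st}}(s)=\operatorname{T}^G_{\alpha_1}(\chi_s^{(1)})\circ\cdots\circ\operatorname{T}^G_{\alpha_d}(\chi_s^{(d)}),\qquad \chi_s^{(d)}=\chi_s,\quad \chi_s^{(a)}={}^{s_{\alpha_{a+1}}\cdots s_{\alpha_d}}\chi_s .
\]
As $a$ runs through the factors, the positive roots $\beta_a=s_{\alpha_d}\cdots s_{\alpha_{a+1}}(\alpha_a)$ enumerate $\Phi(P)$, each exactly once. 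The rank-one character $\psi_a=\chi^{(a)}_s\circ\alpha_a^\vee$ therefore equals $\chi_s\circ\beta_a^\vee$. Its exponents are $p=\langle\nu,\beta_a^\vee\rangle+js$ and $q=\langle\nu',\beta_a^\vee\rangle+js$, where $\beta_a\in\Phi_j(P)$. By \eqref{eq:local_L_factor} and \eqref{eq:L_j}, $L_\C(v,\psi_a)=\Gamma_\C(v+js+c_{\beta_a})$.

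\Cref{prop:rk1-scalar} gives the scalar $c(\psi_a)=L_\C(0,\psi_a)/L_\C(1,\psi_a)=L_\C(js,\chi_{\ulm}\circ\beta_a^\vee)/L_\C(1+js,\chi_{\ulm}\circ\beta_a^\vee)$. Multiplying these over $a$ and grouping by grade with \eqref{eq:L_j} produces exactly $\prod_j L_\C(js,\tau,r_j)/L_\C(1+js,\tau,r_j)$. At $s=s_0$, hyper-criticality makes every factor finite and nonzero. Indeed, \eqref{eq:comb-critical-bounds} gives $A(\psi_a)=js_0+c_{\beta_a}\in\Z_{>0}$, so each $c(\psi_a)=2\pi/A(\psi_a)$ lies in $2\pi\Q^\times$. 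I note that the powers of $2\pi$ agree on both sides: each ratio $\Gamma_\C(A)/\Gamma_\C(A+1)$ contributes exactly one factor $2\pi$, so the identity is consistent up to $\Q^\times$.

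Holomorphy at $s_0$ is handled one factor at a time. Each $\operatorname{T}^G_{\alpha_a}$ is induced from the $H=\SL_2(\C)$ operator $\operatorname{T}_H(\psi_a)$ along the intermediate parabolic attached to $\alpha_a$. By \Cref{prop:irreducibility_principal_series}, every intermediate $I(\chi^{(a)}_{s_0})$ is irreducible, and in particular so is the rank-one series for $\psi_a$, which is the content of \Cref{lem:rk1-regularity}. Normalize the family by $c(\psi_a)^{-1}$. The normalized operator is then holomorphic near $s_0$ on the whole series, and not only on the lowest $K$-type (cf.\ \Cref{rem:rk1-scope}), because the rank-one operator acts on each $K$-type by a ratio of Gamma functions that has no pole when $A(\psi_a)\in\Z_{>0}$ and the series is irreducible. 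Since $c(\psi_a)$ is finite at $s_0$, $\operatorname{T}^G_{\alpha_a}(s_0)$ is holomorphic, and it is nonzero on the lowest type. A nonzero map between irreducible modules is an isomorphism, so $\operatorname{T}_{\mathrm{st}}(s_0)$ is a composition of isomorphisms. Functoriality then gives isomorphisms in relative Lie algebra cohomology in every degree. One-dimensionality in degree $q_0(G)$ follows from the Kostant/Borel--Wallach analysis after \Cref{lem:combinatorial_lemma}, applied to each $I(\chi^{(a)}_{s_0})$.

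The remaining step, which I expect to be the main obstacle, is cohomological. For each $a$ one must show $\operatorname{T}^{G,\bullet}_{\alpha_a}([J_{\chi^{(a)}}])\approx_{\Q^\times} c(\psi_a)[J_{\chi^{(a-1)}}]$, where $\chi^{(a-1)}={}^{s_{\alpha_a}}\chi^{(a)}$. This is the role of \Cref{lem:main-first-Kostant-transitivity}. My approach would use the Delorme-type construction of $[J_\chi]$ from \Cref{subsec:main-first-Delorme}. That construction writes the class as evaluation of a section with prescribed lowest $K$-type component, paired with a rational highest-weight vector of the relevant Kostant summand of $V_{\ulb}$. Using induction in stages through the rank-one parabolic for $\alpha_a$, the intertwiner acts on that lowest $K$-type through $\operatorname{T}_H(\psi_a)$, sending $f_{\psi_a}\mapsto c(\psi_a)\widetilde f_{\psi_a}$. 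I would check that $\widetilde f_{\psi_a}$ corresponds to the generator for $\chi^{(a-1)}$ up to a rational constant. The source of that constant is the following: the Kostant representatives of consecutive steps differ by $s_{\alpha_a}$ in one absolute factor; the pinning \eqref{eq:rk1-root-map} fixes the extremal vectors over $\Q$; and Weyl representatives act on rational weight vectors by signs. Composing the $d$ steps then yields \eqref{eq:main-arithmetic-scalar}.
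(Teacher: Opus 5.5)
Your factorization along a reduced word is correct and agrees with the paper. So are the enumeration of $\Phi(P)$ by the roots $\beta_a$, the regrouping of $\prod_a c(\psi_a)$ into $\prod_j L_{\mathbb C}(js_0,\tau,r_j)/L_{\mathbb C}(1+js_0,\tau,r_j)$, and the observation $A(\psi_a)=js_0+c_{\beta_a}\in\mathbb Z_{>0}$.

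For holomorphy you take a different route from the paper. You use the action of $\operatorname{T}_H$ on \emph{every} $C_H$-type. Classically, on $V_{D+2j}$ it acts, up to sign, by $\frac{2\pi}{A(\psi)}\prod_{i=1}^{j}\frac{A(\psi^{-1})+i}{A(\psi)+i}$, which is finite and nonzero here since $A(\psi)$ and $A(\psi^{-1})$ are the positive integers $r,t$ in some order. The paper instead needs only the lowest-type scalar, irreducibility, and a leading-Laurent-coefficient argument. Your route works and even gives invertibility without irreducibility. However, the formula must be cited or proved, and then combined with Knapp--Stein meromorphy to pass from $K$-types to the operator on the smooth series. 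The appeal to irreducibility in your sentence plays no role in that argument.

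The genuine gap is the cohomological step. You flag it as the main obstacle, but then only sketch it, and the sketch rests on premises that do not hold.

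First, $[J_\chi]$ is not defined through a section with a prescribed lowest $K$-type component. By \eqref{eq:main-first-inverse} it is $r_B^{-1}(1\otimes h^{\rm coch}_\chi)$, normalized by evaluation at the identity and by the Kostant cocycle for $\underline w_\chi$ in $H^{q_0(G)}(\mathfrak n,V_{\ulb})$. So there is no $G$-level lowest type on which ``the intertwiner acts through $\operatorname{T}_H$''. You must instead compute $(\mathfrak g,K)$-cohomology in stages through $P_\alpha$, which needs the following:
\begin{itemize}
\item Factor $\underline w_{\chi^{(a)}}=\ulw_\alpha\ulw^{(\alpha)}$ with $\ulw^{(\alpha)}\in W^{P_\alpha}_{\rm abs}$ of length $q_0(G)-1$.
\item Use that consecutive Kostant elements differ by $\jmath(s_\alpha)=(s_\alpha,s_\alpha)$, i.e.\ in \emph{both} absolute factors. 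They do not differ in one factor, as you claim; otherwise their lengths would differ by one, contradicting \eqref{eq:main-first-matching}.
\item Consequently $\ulw_\alpha$ switches between $(s_\alpha,1)$ and $(1,s_\alpha)$ while $\ulw^{(\alpha)}$ is unchanged. This is exactly what makes the coefficient $F_{\ulw^{(\alpha)}}\simeq\mathcal E_{r,t}$ common to source and target, so that $(\operatorname{T}^G_\alpha)^\bullet$ becomes $\operatorname{T}_H(\psi_\alpha)^\bullet$ between the groups $H^1(\mathfrak h,C_H;I_H(\psi_\alpha^{\pm1})\otimes\mathcal E_{r,t})$.
\item Show that $h^{\rm coch}_{\chi^{(a)}}$ is a rational multiple of $X^*_{\alpha^{(i)}}\wedge\dot{\ulw}_\alpha(h^{\rm coch}_{\ulw^{(\alpha)}})$, and that it pulls back to a rational multiple of $h_\pm$.
\end{itemize}

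Second, even on $H$, the identity $\operatorname{T}_Hf_\psi=c(\psi)\widetilde f_\psi$ does not by itself give the scalar between evaluation-normalized classes. The vector $\widetilde f_\psi$ vanishes at the identity, so ``checking that $\widetilde f_\psi$ corresponds to the target generator'' is empty as a vector-level comparison. You need both classes realized by cochains $j_\pm\circ P_{r,t}$ factoring through a single $C_H$-equivariant map $P_{r,t}:(\mathfrak h/\mathfrak c_H)\otimes\mathcal E_{r,t}^\vee\to V_D$. Equivariance then upgrades the test-vector identity to $\operatorname{T}_H\circ j_+=\tfrac{2\pi}{r}j_-$. Evaluating $j_-\circ P_{r,t}$ at the identity on $[e_2]$ then produces the rational factor $(-1)^r$.

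Without these ingredients, the per-step relation $\approx_{\mathbb Q^\times}c(\psi_a)$ is unproved, and hence so is the rationality of $a$.
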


\subsection{Strategy of the proof}
We first fix the cohomology generators independently of the operator. Evaluating functions in the induced representation at the identity and then taking the summand in nilradical cohomology specified by Kostant's theorem gives the isomorphism \eqref{eq:main_kostant} from bottom-degree cohomology to a tensor product of degree-zero relative cohomology groups for the torus factors. The vectors $1_{\chi^j}\otimes1_{(\chi^j)^{-1}}$ determine a generator for each intermediate principal series.

\Cref{subsec:main-first-rank-one} constructs the rational classes for $H=\mathrm{SL}_2(\mathbb C)$ and represents them by cochains built from the test vectors of \eqref{eq:rk1-test-vectors}. \Cref{subsec:main-first-regularity} uses the scalar calculation \eqref{eq:rk1-scalar} and irreducibility to prove holomorphy, first for the operator on $H$ and then for each operator on $G$ associated with a simple reflection.

In \Cref{subsec:main-first-transitivity}, we use these cochains to compute the intertwining operator on the rational classes for $H$, and compare those classes with the generators $[J_{\chi^{(a)}}]$ for $(\mathfrak g,K)$-cohomology. Computing cohomology through the parabolic subgroup $P_\alpha$ identifies the coefficient representation and degree that occur in the calculation for $L_\alpha$. The chosen root vectors and highest-weight vectors make this comparison rational. Compatibility with the intertwining integral gives the commutative diagram, whose identities yield \eqref{eq:main-arithmetic-scalar}. This follows the rank-one method of \cite[Sections~4.1.8 and 4.2.3]{raghuram-imaginary}.

\subsubsection{Intermediate characters and the contributing Kostant elements}
\label{subsec:main-first-intermediates}
We first check that the same coefficient system and bottom degree apply throughout the factorization. Computing the cohomology of $\Pi$ and $\widetilde\Pi$ through the parabolic subgroups $P$ and $Q$ uses the Kostant elements $\ulw_{\xi}$ and $\ulw_{\xi}'$ from \eqref{eq:comb-kostant-representative} and \Cref{prop:associated}. Put $\mathfrak n_P=\operatorname{Lie}_{\R}(N_P)\otimes_{\R}\C$. Kostant's decomposition places the coefficient representation $V^{L_P}_{\ulw_{\xi}\cdot\underline\lambda}$ of $L_P$ in $H^{\ell_{\rm abs}(\ulw_{\xi})}(\mathfrak n_P,V_{\ulb})$. The equal lengths of $\ulw_{\xi}$ and $\ulw_{\xi}'$, given by \eqref{eq:comb-conclusion} and \Cref{prop:associated}, combine with the bottom-degree formula \eqref{eq:complex-bottom-degree} and $q_0(L_P)=q_0(L_Q)$ to give
\begin{equation}\label{eq:main-parabolic-bottom-degrees}
 q_0(L_P)+\ell_{\rm abs}(\ulw_{\xi})=q_0(G)=q_0(L_Q)+\ell_{\rm abs}(\ulw_{\xi}').
\end{equation}
For $\widetilde\Pi={}^a\Ind_Q^G({}^{w_P}\tau\otimes\delta_Q)$ in \eqref{eq:mt3-endpoints}, the representation of $L_Q$ from which we induce is \({}^{w_P}\tau\otimes\delta_Q\). Its cohomology uses the coefficient representation $V^{L_Q}_{\tilde{\ulm}}$, where $\tilde{\ulm}=\ulw_{\xi}'\cdot\underline\lambda$ as in \Cref{prop:associated}.

Using the identification $\Pi\simeq I(\chi)$ in \eqref{eq:mt3-endpoints}, we can instead compute the source cohomology by induction from the Borel subgroup $B$. Here $\tau\simeq I_{L_P}(\chi_{\ulm})$ and $\chi=\chi_{\ulm}\cdot(\delta_P^{-1/2}|_T)$. The element $\ulw_\xi$ indexes the coefficient representation in the cohomology of $\mathfrak n_P$, while $(w_{L_P},1)$ indexes the subsequent summand for the Borel subgroup $B\cap L_P$ of $L_P$. Thus \eqref{eq:comb-Borel-element} implies that $\underline w_{\chi}=(w_{L_P},1)\ulw_{\xi}=(w_\xi w_G,w_\xi)$. Minimality of $\ulw_{\xi}$ makes this factorization length-additive:
\[
 \ell_{\rm abs}(\underline w_{\chi})
   =\ell_{\rm res}(w_{L_P})+\ell_{\rm abs}(\ulw_{\xi})
   =q_0(L_P)+\ell_{\rm abs}(\ulw_{\xi})=q_0(G).
\]

To follow the operator one simple reflection at a time, choose a reduced expression and its partial products
\[
 w_P=s_{\alpha_{d_P}}\cdots s_{\alpha_1},\qquad
 w_{(a)}=s_{\alpha_a}\cdots s_{\alpha_1},\quad w_{(0)}=1,
 \qquad \chi^{(a)}={}^{w_{(a)}}\chi.
\]
Thus $\chi^{(0)}=\chi$ and $\chi^{(d_P)}=\widetilde\chi$. Applying the Weyl action to the differential in \eqref{eq:comb-evaluation-parameter} gives, for $0\leq a\leq d_P$,
\[
 d\chi^{(a)}
   =\bigl(w_{(a)}(\xi),w_{(a)}(\xi')\bigr).
\]
Every $I\left(\chi^{(a)}\right)$ is irreducible by \Cref{prop:irreducibility_principal_series}. For an intermediate character $\chi^{(a)}$, put
\begin{equation}
\label{eq:main-first-absolute-elements}
 \underline w_{\chi^{(a)}}
   =\jmath(w_{(a)})\underline w_\chi
   =(w_{(a)}w_\xi w_G,w_{(a)}w_\xi).
\end{equation}
Applying $\jmath(w_{(a)})$ to the identity following \eqref{eq:comb-Borel-element} gives $d\chi^{(a)}=-\underline w_{\chi^{(a)}}(\underline\lambda+\underline\rho_G)$. The length identity for multiplication by the longest element, applied to $w_{(a)}w_\xi$, shows that the lengths of the two components of $\underline w_{\chi^{(a)}}$ sum to $\ell(w_G)=q_0(G)$. Together these facts give
\begin{equation}
\label{eq:main-first-matching}
 \ell_{\rm abs}(\underline w_{\chi^{(a)}})=q_0(G),\qquad
 d(\delta_B^{1/2}\chi^{(a)})+\underline w_{\chi^{(a)}}\cdot\underline\lambda=0.
\end{equation}
Thus the same coefficient system $V_{\ulb}$ and bottom degree occur at every step.

For the rank-one calculation, fix $a$ and a simple root $\alpha$, and put
\begin{equation}
\label{eq:main-first-root-exponents}
 r_\alpha=|\langle w_{(a)}(\xi),\alpha^\vee\rangle|,
 \qquad
 t_\alpha=|\langle w_{(a)}(-\xi'),\alpha^\vee\rangle|.
\end{equation}
The lattice calculation \eqref{eq:comb-xi-lattice} and the sign condition \eqref{eq:critical_condition} make these positive integers. Consequently
\[
 \chi^{(a)}\circ\alpha^\vee=z^{r_\alpha}\bar z^{-t_\alpha}
 \quad\text{or}\quad z^{-r_\alpha}\bar z^{t_\alpha}.
\]
These are the two inducing characters treated in \eqref{eq:rk1-cohom-data}.

\subsubsection{Rational structures in bottom-degree cohomology}
\label{subsec:main-first-Delorme}
We now choose root vectors and a highest-weight vector in $V_{\ulb}$, and use evaluation of functions in $I(\chi)$ at the identity to define a generator of $H^{q_0(G)}(\mathfrak g,K;I(\chi)\otimes V_{\ulb})$ and its rational structure.

Let $\mathfrak n$ be the complexification of the real Lie algebra of the unipotent radical of the fixed Borel subgroup $B$. Thus
\[
 \mathfrak n=\bigoplus_{\beta\in\Phi_{\mathrm{abs}}^+(G)}
                         \mathfrak g_\beta.
\]
Each absolute root space $\mathfrak g_\beta$ is a complex line.

For $\alpha\in\Phi_0$ and the complex root parametrization $x_\alpha$ fixed in \Cref{subsec:root-homomorphisms}, we have
\[
 X_\alpha=(dx_\alpha)_0(1)
      \in\operatorname{Lie}_{\C}(\mathrm G_0),
\]
where $1$ is the standard tangent vector at $0$ in $\mathbb G_a(\C)=\C$. After restriction of scalars and base change to $\C$, this map is, in the matrix coordinates of \eqref{eq:embedding},
\[
 (u_1,u_2)\longmapsto
   \bigl(x_\alpha(u_1),\overline{x_\alpha(\bar u_2)}\bigr).
\]
Its complex differential at $(0,0)$ in the two coordinate directions then gives us the absolute root vectors
\[
 X_{\alpha^{(1)}}=(X_\alpha,0),\qquad
 X_{\alpha^{(2)}}=(0,\overline{X_\alpha}).
\]
Their Chevalley bracket constants are integers \cite[Theorem~23.71]{milne}. Fix a total order on $\Phi_{\mathrm{abs}}^+(G)$ and write $\left\{X_\beta^*\right\}$ for the dual basis of $\mathfrak n^*$. We use the induced order on each root set for all exterior products below, thereby fixing their signs.

To define a rational form of the coefficient representation, choose a highest-weight vector $v_{\underline\lambda}\in V_{\ulb}$ and put
\[
 \mathfrak n_{\mathbb Q}
   =\bigoplus_{\beta\in\Phi_{\mathrm{abs}}^+(G)}\mathbb Q X_\beta.
\]
Let $\mathcal{V}_{\ulb,\mathbb Q}$ be the rational span of
\[
 \left(\prod_{\beta\in\Phi_{\mathrm{abs}}^+(G)}
          X_{-\beta}^{k_\beta}\right)v_{\underline\lambda},
 \qquad k_\beta\in\mathbb Z_{\geq0},
\]
where the product follows the fixed root order and the powers mean repeated action through the coefficient representation. The action preserves $\mathcal{V}_{\ulb,\mathbb Q}$, so both the bracket on $\mathfrak n_{\mathbb Q}$ and its coefficient action are defined over $\mathbb Q$. It follows that the cochain complex
\begin{equation}\label{eq:main-rational-nilradical-complex}
 C^q(\mathfrak n_{\mathbb Q},\mathcal{V}_{\ulb,\mathbb Q})
   =\operatorname{Hom}_{\mathbb Q}
      \bigl(\bigwedge^q\mathfrak n_{\mathbb Q},\mathcal{V}_{\ulb,\mathbb Q}\bigr)
\end{equation}
has a differential over $\mathbb Q$, and its complexification is $C^\bullet(\mathfrak n,V_{\ulb})$. The pinned Weyl representatives act by rational linear combinations of the chosen root vectors and preserve $\mathcal{V}_{\ulb,\mathbb Q}$. Indeed, they are products of root subgroup elements with parameters $\pm1$, acting by finite exponential sums of rational nilpotent operators. These choices also determine the vectors used in the nilradical cohomology summands and in the subsequent calculations for Levi subgroups.

Thus the rational structures below are defined using the finite-dimensional complex in \eqref{eq:main-rational-nilradical-complex}. No rational form of the smooth principal series is needed.

For the fixed initial character $\chi$ in \Cref{thm:mainthm}, put
\[
 \mathscr H_G(\chi)=
 H^{q_0(G)}(\mathfrak g,K;I(\chi)\otimes V_{\ulb}),\qquad
 K_B=K\cap B.
\]
Equation \eqref{eq:main-first-matching} shows that the central characters of $I(\chi)$ and $V_{\ulb}$ are inverse. Hence the center acts trivially on their tensor product, and we may quotient out this center, which is contained in $K$, when applying the cohomology formula. Since $\mathfrak b/(\mathfrak b\cap\mathfrak k)\simeq\mathfrak g/\mathfrak k$, evaluation at the identity takes the following form on cochains:
\begin{equation}
\label{eq:main-first-evaluation}
 \begin{split}
 r_B:C^j(\mathfrak g,K;I(\chi)\otimes V_{\ulb})
  &\xrightarrow{\ \sim\ }
 C^j(\mathfrak b,K_B;
       \mathbb C_{\delta_B^{1/2}\chi}\otimes V_{\ulb}),\\
 (r_BJ)(Y_1,\ldots,Y_j)
  &=(\operatorname{ev}_1\otimes1)
                  J(\overline Y_1,\ldots,\overline Y_j).
 \end{split}
\end{equation}
Here $\overline Y_i$ denotes the image of $Y_i\in\mathfrak b$ in $\mathfrak g/\mathfrak k$. This is the Frobenius--Shapiro construction with the tensor identity, as described in \cite[III.2.5]{borel-wallach} and \cite[Section~4.1.8]{raghuram-imaginary}. Evaluation is $\mathfrak b$-equivariant and commutes with the relative cochain differentials, so $r_B$ is an isomorphism of cochain complexes.

For the summand of $H^{q_0(G)}(\mathfrak n,V_{\ulb})$ indexed by $\underline w_\chi$, put $\Phi_{\underline w}=\{\beta\in\Phi_{\rm abs}^+(G): \underline w^{-1}(\beta)<0\}$ and choose the Kostant cocycle
\begin{equation}
\label{eq:main-first-Kostant}
 h_\chi^{\rm coch}
 =\left(\bigwedge_{\beta\in\Phi_{\underline w_\chi}}X_\beta^*\right)
       \otimes\dot{\underline w}_\chi(v_{\underline\lambda}).
\end{equation}
By Kostant's theorem \cite[III.3.1]{borel-wallach}, $\mathbb C[h_\chi^{\rm coch}]$ is the $(\delta_B^{1/2}\chi)^{-1}$-weight line in $H^{q_0(G)}(\mathfrak n,V_{\ulb})$, with the extremal-vector normalization of \cite[Section~2.7, equation~(2.28)]{raghuram-imaginary}. The cocycle \eqref{eq:main-first-Kostant} lies in the rational complex \eqref{eq:main-rational-nilradical-complex}, and its class fixes the rational form $\mathbb Q[h_\chi^{\rm coch}]$.

Borel--Wallach's cohomology formula \cite[III.3.3(ii)]{borel-wallach}, together with the cancellation of the two torus characters in \eqref{eq:main-first-matching}, gives an isomorphism
\[
 \mathscr H_G(\chi)\xrightarrow{\ \sim\ }
 H^0(\mathfrak t,K_B;
       \mathbb C_{\delta_B^{1/2}\chi}\otimes\mathbb C[h_\chi^{\rm coch}]).
\]
It is induced by $r_B$ in \eqref{eq:main-first-evaluation}, followed by restriction to $\bigwedge^{q_0(G)}\mathfrak n$ and passage to nilradical cohomology \cite[III.3.4]{borel-wallach}. We fix its normalization by identifying
\[
 \mathbb C_{\delta_B^{1/2}}\otimes\mathbb C[h_\chi^{\rm coch}]
      \xrightarrow{\ \sim\ }\mathbb C_{\chi^{-1}},
 \qquad 1\otimes [h_\chi^{\rm coch}]\longmapsto1_{\chi^{-1}}.
\]

Write $\chi=\boxtimes_j\chi^j$ in the fixed coordinates $T\simeq(\mathbb C^\times)^n$, and let $\mathfrak t_j$ be the complexification of the real Lie algebra of the $j$-th factor. For each factor, the degree-zero relative cohomology group
\[
 \mathscr H^0(\chi^j)=
 H^0(\mathfrak t_j,U(1);
       \mathbb C_{\chi^j}\otimes\mathbb C_{(\chi^j)^{-1}})
\]
has the rational basis $1_{\chi^j}\otimes1_{(\chi^j)^{-1}}$. Their tensor product is
\[
 \Omega_\chi=\bigotimes_{j=1}^n
       (1_{\chi^j}\otimes1_{(\chi^j)^{-1}}).
\]
These identifications give
\begin{equation}
\label{eq:main_kostant}
 \mathscr{K}_{\chi}^G:\mathscr H_G(\chi)\xrightarrow{\ \sim\ }
       \bigotimes_j\mathscr H^0(\chi^j),\qquad
 \mathscr{K}_{\chi}^G([J])=c\ \Omega_\chi,
\end{equation}
where, for a closed cochain $J$, the scalar $c\in\mathbb C$ is determined by
\begin{equation} 
  \label{eq:HG_to_H0}
 \left[(r_BJ)|_{\bigwedge^{q_0(G)}\mathfrak n}\right]
    =c\bigl(1_{\delta_B^{1/2}\chi}\otimes [h_\chi^{\rm coch}]\bigr).
\end{equation}

The normalized class specified by \eqref{eq:HG_to_H0} with $c=1$ has the cocycle representative
\begin{equation}\label{eq:main-first-inverse}
 J_\chi=r_B^{-1}
       \bigl(1_{\delta_B^{1/2}\chi}\otimes h_\chi^{\rm coch}\bigr),
\end{equation}
where the cochain on $\mathfrak b$ is extended by zero whenever an argument lies in $\mathfrak t$. Thus $\mathscr{K}_{\chi}^G([J_\chi])=\Omega_\chi$, and we set
\begin{equation}\label{eq:main-prescribed-rational-line}
 \mathscr H_G(\chi)_{\mathbb Q}
   :=(\mathscr{K}_{\chi}^G)^{-1}(\mathbb Q\Omega_\chi)
    =\mathbb Q[J_\chi].
\end{equation}

The same construction applies to any inducing character satisfying the character identity and length equality in \eqref{eq:main-first-matching}. In particular, the root vectors, their order, and the highest-weight vector chosen above determine $\mathscr{K}_{\chi^{(a)}}^G$ and $[J_{\chi^{(a)}}]$ for every $0\leq a\leq d_P$. This includes the characters $\chi^{(0)}=\chi$ and $\chi^{(d_P)}=\widetilde\chi$ that induce $\Pi$ and $\widetilde\Pi$, respectively.

\subsection{Rational cohomology classes for
\texorpdfstring{$\mathrm{SL}_2(\mathbb C)$}{SL2(C)}}
\label{subsec:main-first-rank-one}
Let $H=\mathrm{SL}_2(\mathbb C)$, with upper triangular Borel subgroup $B_H$ and maximal compact subgroup $C_H=\mathrm{SU}(2)$, as in \Cref{sec:rk1-model}. Write $\mathfrak h$ and $\mathfrak c_H$ for their complexified real Lie algebras. For $r,t\in\mathbb Z_{>0}$, put
\begin{equation}\label{eq:rk1-cohom-data}
 \psi_+(z)=z^r\bar z^{-t},\qquad
 \psi_-(z)=z^{-r}\bar z^t=\psi_+^{-1}.
\end{equation}
We use the normalized principal series $I_\pm=I_H(\psi_\pm)$ of \eqref{eq:rk1-induction} and the coefficient representation
\begin{equation}\label{eq:rk1-coefficient}
 \mathcal{E}_{r,t}=\Sym^{r-1}(\mathbb C^2)
                 \boxtimes\Sym^{t-1}(\mathbb C^2).
\end{equation}
We will fix rational generators of $H^1(\mathfrak h,C_H;I_\pm\otimes \mathcal{E}_{r,t})$ and construct representatives using the test vectors of \eqref{eq:rk1-test-vectors}.
\vspace{1em}

\noindent\textbf{\large The rational generators}
The complexified nilradical of $B_H$ is $\mathfrak n_H=\mathbb C e_1\oplus\mathbb C e_2$, where
\[
 e_1=(E_{12},0),\qquad e_2=(0,E_{12})
 \quad\text{in}\quad
 \mathfrak h=\mathfrak{sl}_2(\mathbb C)\oplus
             \mathfrak{sl}_2(\mathbb C).
\]
The vectors $e_1$ and $e_2$ lie in different direct summands, so $[e_1,e_2]=0$ and $\mathfrak n_H$ is abelian. Alongside them, fix the negative-root vectors
\begin{equation}\label{eq:rk1-lowering-operators}
 f_1=(E_{21},0),\qquad f_2=(0,E_{21}),\qquad
 E_{21}=\begin{pmatrix}0&0\\1&0\end{pmatrix}.
\end{equation}
Use the monomial bases
\[
 A_j=x^{r-1-j}y^j\quad(0\leq j\leq r-1),\qquad
 B_k=x^{t-1-k}y^k\quad(0\leq k\leq t-1)
\]
of the two coefficient factors. On either polynomial factor, $E_{12}$ acts as $x\,\partial/\partial y$ and $E_{21}$ as $y\,\partial/\partial x$, so the negative-root actions are
\begin{equation}\label{eq:rk1-lowering-action}
 \begin{aligned}
 f_1(A_j\otimes B_k)&=(r-1-j)A_{j+1}\otimes B_k
                         &&(j<r-1),\\
 f_2(A_j\otimes B_k)&=(t-1-k)A_j\otimes B_{k+1}
                         &&(k<t-1).
 \end{aligned}
\end{equation}
The first action is zero when $j=r-1$, and the second is zero when $k=t-1$. These are the \emph{lowering operators}, since each lowers the weight in its $\mathfrak{sl}_2$ factor by two. Repeated application expresses every monomial in terms of the highest-weight vector:
\begin{equation}\label{eq:rk1-lowering-monomials}
 A_j\otimes B_k
  =\frac{(r-1-j)!\,(t-1-k)!}{(r-1)!\,(t-1)!}
     f_1^j f_2^k(A_0\otimes B_0).
\end{equation}
Here the powers mean ordinary repeated action, with the factorials accounting for the monomial normalization.

The positive-root action $E_{12}=x\,\partial/\partial y$ has the following kernels and cokernels:
\[
 \begin{aligned}
 \ker\!\left(E_{12}|_{\Sym^{r-1}(\mathbb C^2)}\right)
   &=\mathbb C A_0,&
 \Sym^{r-1}(\mathbb C^2)/E_{12}\Sym^{r-1}(\mathbb C^2)
   &=\mathbb C[A_{r-1}],\\
 \ker\!\left(E_{12}|_{\Sym^{t-1}(\mathbb C^2)}\right)
   &=\mathbb C B_0,&
 \Sym^{t-1}(\mathbb C^2)/E_{12}\Sym^{t-1}(\mathbb C^2)
   &=\mathbb C[B_{t-1}].
 \end{aligned}
\]
Identify the cochains $f:\mathfrak n_H\to \mathcal{E}_{r,t}$ with elements in $(\n_H)^{\vee}\otimes \mathcal{E}_{r,t}$. Taking the cokernel in one factor of $\mathcal{E}_{r,t}$ and the kernel in the other gives the two cochains:
\begin{equation}\label{eq:rk1-kostant}
 h_+=e_1^{\vee}\otimes A_{r-1}\otimes B_0,\qquad
 h_-=e_2^{\vee}\otimes A_0\otimes B_{t-1}.
\end{equation}
Recall that the Chevalley--Eilenberg differential is
\[
 (df)(X,Y)=X\cdot f(Y)-Y\cdot f(X)-f([X,Y]),
 \qquad X,Y\in\mathfrak n_H;
\]
see, for example, \cite[Corollary~7.7.3]{weibel}. Writing $f=e_1^{\vee}\otimes v_1+e_2^{\vee}\otimes v_2$, with $v_i\in \mathcal{E}_{r,t}$, we have $f(e_i)=v_i$. Since $[e_1,e_2]=0$, this becomes
\[
 df=\left(e_1^{\vee}\wedge e_2^{\vee}\right)\otimes(e_1v_2-e_2v_1).
\]
Thus the bracket term vanishes because $\n_H$ is abelian. Here $e_1$ acts on the first coefficient factor and $e_2$ on the second. The kernel identities above give
\[
 \begin{aligned}
 e_2(A_{r-1}\otimes B_0)
   &=A_{r-1}\otimes E_{12}B_0=0,\\
 e_1(A_0\otimes B_{t-1})
   &=E_{12}A_0\otimes B_{t-1}=0.
 \end{aligned}
\]
It follows that $dh_+=dh_-=0$, so both cochains are cocycles. The cochain complex is the tensor product of the cochain complexes for the one-dimensional Lie algebras $\mathbb C e_1$ and $\mathbb C e_2$ with their respective coefficient factors. The preceding kernel and cokernel calculation therefore shows that
\[
 H^1(\mathfrak n_H,\mathcal{E}_{r,t})
    =\mathbb C[h_+]\oplus\mathbb C[h_-].
\]

The diagonal torus $\{\operatorname{diag}(z,z^{-1}):z\in\mathbb C^\times\}$ acts on these classes with weights
\[
 \operatorname{wt}([h_+])=(-r-1,t-1),\qquad
 \operatorname{wt}([h_-])=(r-1,-t-1).
\]
By \eqref{eq:rk1-induction}, the inducing factors $\delta_{B_H}^{1/2}\psi_+$ and $\delta_{B_H}^{1/2}\psi_-$ have weights $(r+1,1-t)$ and $(1-r,1+t)$. Thus $1\otimes[h_\pm]$ spans the torus-invariant line in $\mathbb C_{\delta_{B_H}^{1/2}\psi_\pm} \otimes H^1(\mathfrak n_H,\mathcal{E}_{r,t})$, respectively.

The other degree-one class has nontrivial compact-circle character. The degree-zero class, of weight $(r-1,t-1)$ before twisting, also has no compact invariants after either twist. Consequently the degree-one case of the cohomology formula \cite[III.3.3(ii)]{borel-wallach} shows that
\begin{equation}\label{eq:rk1-edge}
 H^1(\mathfrak h,C_H;I_\pm\otimes \mathcal{E}_{r,t})
 \xrightarrow{\ \sim\ }
 \mathbb C\,(1\otimes[h_\pm]).
\end{equation}
As in \eqref{eq:main-first-evaluation}, the map is evaluation at the identity followed by restriction to $\mathfrak n_H$ and passage to its cohomology. Define $[J_\pm]$ to be the classes whose images are $1\otimes[h_\pm]$. This fixes the rational structure on $H^1(\mathfrak h,C_H;I_\pm\otimes \mathcal{E}_{r,t})$ as $\mathbb Q[J_\pm]$.
\vspace{1em}

\noindent\textbf{\large Cochains constructed from test vectors in the minimal $C_H$-type}
We next compute the scalars of the maps induced by $\operatorname T_H$ on cohomology with respect to the generators $[J_+]$ and $[J_-]$. By \Cref{sec:rk1-model}, the lowest $C_H$-type in both $I_+$ and $I_-$ is
\[
 V_D=\Sym^D(\mathbb C^2),\qquad D=r+t,
\]
and it occurs with multiplicity one. We first identify its two copies explicitly. Let $\ell_x,\ell_y$ extract the coefficients of $x^D,y^D$. For $v\in V_D$, set
\begin{equation}\label{eq:rk1-compact-embeddings}
 j_+(v)(c)=\ell_x(cv),\qquad
 j_-(v)(c)=\ell_y(cv),\qquad c\in C_H,
\end{equation}
and extend to $H$ by the induction law \eqref{eq:rk1-induction} for $\psi_+$ and $\psi_-$, respectively. Left multiplication by $a(e^{i\vartheta})$ acts by the characters $e^{iD\vartheta}$ and $e^{-iD\vartheta}$, so these extensions belong to $I_+$ and $I_-$. Moreover,
\[
 j_\pm(c_0v)(c)=j_\pm(v)(cc_0).
\]
Thus $j_\pm$ are nonzero $C_H$-equivariant maps from the irreducible module $V_D$, and hence are embeddings.

For the standard action $cx=c_{11}x+c_{21}y$ and $cy=c_{12}x+c_{22}y$, comparison of the restrictions to $C_H$ with \eqref{eq:rk1-test-vectors} gives
\[
 \begin{aligned}
 f_{\psi_+}&=j_+(x^D),&
 \widetilde f_{\psi_+}&=j_-(x^D),\\
 f_{\psi_-}&=j_-(y^D),&
 \widetilde f_{\psi_-}&=(-1)^D j_+(y^D).
 \end{aligned}
\]
The last sign comes from $\widetilde f_{\psi_-}(c)=(-c_{12})^D$.

To turn these functions into cochains with coefficient $\mathcal{E}_{r,t}$, we need a common $C_H$-equivariant map to $V_D$. A relative degree-one cochain $J:\mathfrak h/\mathfrak c_H\to I_\pm\otimes \mathcal{E}_{r,t}$ can equivalently be written as
\[
 \widehat J:(\mathfrak h/\mathfrak c_H)\otimes \mathcal{E}_{r,t}^\vee
       \longrightarrow I_\pm,\qquad
 \widehat J(q\otimes\eta)
       =(\operatorname{id}_{I_\pm}\otimes\eta)J(q).
\]
Here $\eta$ evaluates the coefficient factor. We construct the desired map on this domain by polynomial multiplication.

The quotient $\mathfrak h/\mathfrak c_H$ is the adjoint module under $[(X,Y)]\mapsto X+{}^tY$. Put $e_0=(\operatorname{diag}(1,-1),0)$. Its identification with $\Sym^2(\mathbb C^2)$ is
\[
 \beta([e_1])=x^2,\qquad
 \beta([e_0])=-2xy,\qquad
 \beta([e_2])=-y^2.
\]
We next identify the dual coefficient factors with polynomial spaces so that they can be multiplied with $\beta(q)$. The ordinary dual basis vectors are coefficient-extraction functionals:
\[
 A_j^\vee\!\left(\sum_i a_iA_i\right)=a_j,\qquad
 B_k^\vee\!\left(\sum_i b_iB_i\right)=b_k.
\]
To represent these functionals by polynomials, we must specify a pairing on each factor. For $c\in C_H$, the two coefficient factors carry the actions of $c$ and $\bar c$, respectively. Both target polynomial spaces below carry the usual action of $c$.

For the first factor, a homogeneous polynomial $p$ of degree $r-1$ represents the functional
\begin{equation}\label{eq:rk1-determinant-duality}
 f\longmapsto
 \frac{1}{(r-1)!}\,p(\partial_y,-\partial_x)f,
 \qquad f\in\Sym^{r-1}(\mathbb C^2).
\end{equation}
Here $p(\partial_y,-\partial_x)$ means replacing $x$ by $\partial_y$ and $y$ by $-\partial_x$. The result is a scalar because $p$ and $f$ have the same degree. This is the normalized symmetric power of the determinant pairing on $\mathbb C^2$, so it is invariant under $H$.

For the second factor, $c^{\mathsf T}\bar c=1$ gives an invariant bilinear pairing between the usual and conjugate standard representations. Its normalized symmetric power associates a polynomial $p$ of degree $t-1$ with the functional
\begin{equation}\label{eq:rk1-conjugate-duality}
 f\longmapsto
 \frac{1}{(t-1)!}\,p(\partial_x,\partial_y)f,
 \qquad f\in\Sym^{t-1}(\mathbb C^2).
\end{equation}
Here $p$ carries the usual action and $f$ the conjugate action. The pairing is complex bilinear. The conjugation occurs in the group action, not in the coefficients of $p$ or $f$.

The pairings \eqref{eq:rk1-determinant-duality} and \eqref{eq:rk1-conjugate-duality} identify each polynomial space with the dual of its coefficient factor. Their inverses are
\[
 A_j^\vee\longmapsto
       (-1)^{r-1-j}\binom{r-1}{j}x^jy^{r-1-j}, \quad
 B_k^\vee\longmapsto
       \binom{t-1}{k}x^{t-1-k}y^k.
\]
Indeed, substituting the first polynomial into \eqref{eq:rk1-determinant-duality}, for $f=\sum_i a_iA_i$, we have
\[
 \frac{(-1)^{r-1-j}\binom{r-1}{j}}{(r-1)!}
       \partial_y^j(-\partial_x)^{r-1-j}f
 =\frac{\partial_x^{r-1-j}\partial_y^j f}{(r-1-j)!\,j!}
 =a_j.
\]
For $f=\sum_i b_iB_i$, the second polynomial similarly indicates that
\[
 \frac{\binom{t-1}{k}}{(t-1)!}
       \partial_x^{t-1-k}\partial_y^k f
 =\frac{\partial_x^{t-1-k}\partial_y^k f}{(t-1-k)!\,k!}
 =b_k.
\]
Thus the two polynomials represent exactly $A_j^\vee$ and $B_k^\vee$ under the specified pairings. This also explains the signs and binomial coefficients. Both inverse maps are $C_H$-equivariant and have rational matrices in the chosen bases.

Applying $\beta$ and these two identifications, we get the three polynomials of degrees $2,r-1,t-1$. Their product has degree $D=r+t$ and hence lies in $V_D$. This multiplication is the Cartan product used to define
\begin{equation}\label{eq:rk1-cartan-product}
 \begin{split}
 P_{r,t}:(\mathfrak h/\mathfrak c_H)\otimes \mathcal{E}_{r,t}^\vee
       &\longrightarrow V_D,\\
 P_{r,t}(q\otimes A_j^\vee\otimes B_k^\vee)
       &=\beta(q)(-1)^{r-1-j}\binom{r-1}{j}\binom{t-1}{k}
          x^{j+t-1-k}y^{r-1-j+k}.
 \end{split}
\end{equation}
In particular,
\[
 P_{r,t}([e_1]\otimes A_{r-1}^\vee\otimes B_0^\vee)=x^D.
\]
This fixes the normalization of the map.

Now define the two test cochains by
\[
 \widehat J_+^{\mathrm{test}}=j_+\circ P_{r,t},\qquad
 \widehat J_-^{\mathrm{test}}=j_-\circ P_{r,t}.
\]
In the original cochain notation, this means
\[
 J_\pm^{\mathrm{test}}(q)
   =\sum_{j,k}
       j_\pm\!\left(P_{r,t}(q\otimes A_j^\vee\otimes B_k^\vee)\right)
          \otimes A_j\otimes B_k.
\]
At the identity, $j_\pm$ reduce to coefficient extraction. Using \eqref{eq:rk1-cartan-product}, we obtain
\begin{equation}\label{eq:rk1-cochain-evaluation}
 \begin{array}{c|ccc}
 &[e_1]&[e_0]&[e_2]\\ \hline
 (\operatorname{ev}_1\otimes1)J_+^{\mathrm{test}}
    &1\otimes A_{r-1}\otimes B_0&0&0\\
 (\operatorname{ev}_1\otimes1)J_-^{\mathrm{test}}
    &0&0&(-1)^r(1\otimes A_0\otimes B_{t-1}).
 \end{array}
\end{equation}
These are the cochains $1\otimes h_+$ and $(-1)^r(1\otimes h_-)$, extended by zero on the torus Lie algebra. They are closed because $h_\pm$ are cocycles for $\mathfrak n_H$ and their weights are inverse to the inducing characters $\delta_{B_H}^{1/2}\psi_\pm$. The evaluation isomorphism of cochain complexes \eqref{eq:main-first-evaluation}, applied to $H$, therefore proves that $J_\pm^{\mathrm{test}}$ are closed. Their images under \eqref{eq:rk1-edge} give
\begin{equation}\label{eq:rk1-test-to-kostant}
 [J_+^{\mathrm{test}}]=[J_+],\qquad
 [J_-^{\mathrm{test}}]=(-1)^r[J_-].
\end{equation}
This comparison will let us read the cohomological scalar from the operator on the two copies $j_\pm(V_D)$.

\subsection{Regularity of the intertwining operators}
\label{subsec:main-first-regularity}
Before specializing the intertwining operator and passing to cohomology, we need holomorphy on the whole principal series, since \eqref{eq:rk1-scalar} computes only its restriction to the lowest $C_H$-type. We first combine that calculation with irreducibility to prove holomorphy and invertibility for the operator on $H$. We then obtain the same conclusions for the operator associated with a simple root $\alpha$ on $L_\alpha$, and use induction in stages to pass to $G$.
\vspace{1em}

\noindent\textbf{\large The operator on \texorpdfstring{$H$}{H}}
For either $\psi=\psi_+$ or $\psi=\psi_-$ from \Cref{subsec:main-first-rank-one}, vary the radial part of the inducing character by setting
\[
 \psi_s=\psi|\cdot|_{\mathbb C}^{s},\qquad
 \operatorname{T}_H(\psi_s):
 I_H(\psi_s)\longrightarrow I_H(\psi_s^{-1}),\qquad
 \psi_s^{-1}=\psi^{-1}|\cdot|_{\mathbb C}^{-s}.
\]
The parameter $s=0$ recovers the fixed character $\psi$, while the inverse character in the target varies with radial parameter $-s$. Since these twists are trivial on the compact torus, restriction to $C_H$ realizes the source and target families in fixed compact pictures, in which the $H$-actions depend holomorphically on $s$. We use the same compact functions to define the embeddings $j_\pm$ from \eqref{eq:rk1-compact-embeddings} throughout these families. In these realizations, the continuation theorem ensures that $\operatorname{T}_H(\psi_s)$ is a meromorphic family of continuous operators whose poles have finite order \cite[Theorem~3.1]{knapp-stein-II}.

\begin{lemma}\label{lem:rk1-regularity}
For either $\psi=\psi_+$ or $\psi=\psi_-$, the family $\operatorname{T}_H(\psi_s)$ is holomorphic at $s=0$, and its value is an isomorphism $I_H(\psi)\to I_H(\psi^{-1})$ of smooth principal series and of their Harish--Chandra modules.
\end{lemma}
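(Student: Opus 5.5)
We argue for both cases $\psi=\psi_\pm$ at once, combining the scalar formula of \Cref{prop:rk1-scalar} with irreducibility, as anticipated in \Cref{rem:rk1-scope}. First we check that $I_H(\psi_s)$ and $I_H(\psi_s^{-1})$ are irreducible at $s=0$. Here $\psi_+(z)=z^r\bar z^{-t}$ with $r,t>0$, so the two exponents are nonzero integers of opposite sign. The Speh--Vogan criterion \cite[Theorem~6.19]{speh-vogan}, used exactly as in \Cref{prop:irreducibility_principal_series} with $G=H$ and the single simple root, then excludes reducibility. Replacing $\psi$ by $\psi^{-1}$ only swaps the signs, so the same holds for $\psi_\pm^{-1}$.

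For holomorphy, we use the Knapp--Stein continuation \cite[Theorem~3.1]{knapp-stein-II} in the fixed compact picture. Suppose the family has a pole of order $n\ge1$ at $s=0$, and write $\operatorname{T}_H(\psi_s)=s^{-n}R_{-n}+O(s^{-n+1})$ with $R_{-n}\neq0$.

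1. The leading coefficient $R_{-n}$ is a continuous operator $I_H(\psi)\to I_H(\psi^{-1})$. It intertwines the $H$-actions at $s=0$, because the intertwining identity $\operatorname{T}_H(\psi_s)\pi_s(h)=\pi'_{-s}(h)\operatorname{T}_H(\psi_s)$ holds for all $s$ and we may compare the most singular terms.
2. Hence $R_{-n}$ is a nonzero $(\mathfrak h,C_H)$-homomorphism between irreducible Harish--Chandra modules, so it is an isomorphism. In particular it is nonzero on every $C_H$-type, including the lowest type $V_D$.
3. On $V_D$, however, \Cref{prop:rk1-scalar} gives $\operatorname{T}_H(\psi_s)f_{\psi_s}=\frac{2\pi}{A(\psi)+s}\widetilde f_{\psi_s}$. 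The restrictions of $f_{\psi_s}$ and $\widetilde f_{\psi_s}$ to $C_H$ are independent of $s$, and $V_D$ occurs with multiplicity one.
4. It remains to evaluate $A(\psi)$. For $\psi_+$ we have $p=r$, $q=-t$, $D=r+t$, so $A=(r-t+r+t)/2=r>0$. Similarly $A(\psi_-)=t>0$.
5. So the scalar in step 3 is regular at $s=0$ with value $2\pi/r$ or $2\pi/t$. This means $R_{-n}$ vanishes on $V_D$, contradicting step 2. Therefore $n=0$ and the family is holomorphic at $s=0$.

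With holomorphy in hand, the value $\operatorname{T}_H(\psi)$ is an intertwiner whose action on $V_D$ is the nonzero scalar $2\pi/A(\psi)$. Being nonzero between irreducible Harish--Chandra modules, it is an isomorphism of those modules. To pass to the smooth globalizations, we note that both smooth principal series are the Casselman--Wallach globalizations of their (irreducible) Harish--Chandra modules. A continuous intertwiner that is an isomorphism on $K$-finite vectors then extends to a topological isomorphism, by the uniqueness of the globalization.

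The main obstacle is step 1, justifying that the leading Laurent coefficient is a genuine $(\mathfrak h,C_H)$-homomorphism at $s=0$. We need the $H$-action in the compact picture to be holomorphic in $s$, and the identification of the source and target at $s=0$ with $I_H(\psi)$ and $I_H(\psi^{-1})$. We plan to derive both from the fixed compact realizations described before the lemma. We will also check that the $C_H$-embeddings $j_\pm$ of \eqref{eq:rk1-compact-embeddings} are independent of $s$, so that the scalar comparison in step 3 is legitimate along the whole family.
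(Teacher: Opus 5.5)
Your proposal is correct and follows essentially the same route as the paper. Both arguments use Speh--Vogan irreducibility and the scalar $2\pi/(A(\psi)+s)$ on the lowest type $V_D$, with $A(\psi_+)=r$ and $A(\psi_-)=t$, to rule out a nonzero leading Laurent coefficient. The one difference is the passage to smooth representations: you invoke uniqueness of the Casselman--Wallach globalization, whereas the paper writes down an explicit continuous inverse by composing $\operatorname{T}_H(\psi_+)$ and $\operatorname{T}_H(\psi_-)$, which act on $V_D$ by the nonzero scalar $(-1)^D(2\pi)^2/(rt)$.
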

\begin{proof}
Both $I_+$ and $I_-$ are irreducible by the criterion used in \Cref{sec:irreducibility-principal-series}: the Cartan-conjugate coroot evaluations have product $rt>0$, and there are no real absolute roots, so the Speh--Vogan obstruction \eqref{eq:irr-SV-signs} cannot occur.

The identities following \eqref{eq:rk1-compact-embeddings} identify an extremal vector in each copy of $V_D$, on which \Cref{prop:rk1-scalar} computes the intertwining operator explicitly. Because these vectors generate their respective copies of $V_D$ under $C_H$, $C_H$-equivariance extends the calculation to the identities
\[
 \operatorname{T}_H(\psi_+|\cdot|_{\mathbb C}^{s})\circ j_+
   =\frac{2\pi}{r+s}j_-,\quad
 \operatorname{T}_H(\psi_-|\cdot|_{\mathbb C}^{s})\circ j_-
   =(-1)^D\frac{2\pi}{t+s}j_+.
\]
Since $r,t>0$, both restrictions are holomorphic and nonzero at $s=0$.

To prove holomorphy of $\operatorname T_H(\psi_s)$, suppose that it has a pole of order $m>0$ at $s=0$, and let
\[
 A=\lim_{s\to0}s^m\operatorname{T}_H(\psi_s)
\]
be its nonzero leading Laurent coefficient. Multiplying the equivariance identity by $s^m$ and taking the limit as $s\to0$, using the holomorphic dependence of the $H$-actions in the fixed compact pictures, shows that $A$ is a continuous $H$-intertwiner from $I_H(\psi)$ to $I_H(\psi^{-1})$. The formulas for the restrictions to $j_+(V_D)$ and $j_-(V_D)$ imply that $A$ vanishes on the corresponding nonzero subspace of its source, because the restriction of the family to that subspace has no pole. The kernel of $A$ on the source Harish--Chandra module is therefore a nonzero submodule, which must be the whole module by irreducibility. Continuity then implies that $A$ vanishes on the smooth principal series as well, since the $C_H$-finite vectors are dense.

This contradicts the choice of $A$ as a nonzero leading coefficient and proves that the family is holomorphic at $s=0$. Its value is nonzero on the corresponding copy $j_+(V_D)$ or $j_-(V_D)$, so it is an isomorphism between the irreducible Harish--Chandra modules.

To construct a continuous inverse on the smooth principal series, we specialize the formulas for the restrictions to $j_\pm(V_D)$ at $s=0$:
\begin{equation}\label{eq:rk1-embedding-scalars}
 \operatorname{T}_H(\psi_+)\circ j_+=\frac{2\pi}{r}j_-,
 \qquad
 \operatorname{T}_H(\psi_-)\circ j_-=(-1)^D\frac{2\pi}{t}j_+.
\end{equation}
Composing the two operators in either order acts on the corresponding copy of $V_D$ by the nonzero scalar $(-1)^D(2\pi)^2/(rt)$. Since this compact type generates its irreducible Harish--Chandra module, equivariance extends each scalar identity to every $C_H$-finite vector, and continuity then extends it to the entire smooth principal series. Thus the reverse operator multiplied by $(-1)^D rt/(2\pi)^2$ is a continuous inverse.
\end{proof}
\vspace{1em}

\noindent\textbf{\large The operators on $G$}
To apply the rank-one result to a step $0\leq a<d_P$ of the reduced expression in \Cref{subsec:main-first-intermediates}, put $\alpha=\alpha_{a+1}$ and let $P_\alpha$ be the standard parabolic whose Levi subgroup $L_\alpha$ has derived root system $\{\pm\alpha\}$. The pinned homomorphism $\varphi_\alpha:H\to L_\alpha^1$ from \eqref{eq:rk1-root-map} sends $a(z)$ to $\alpha^\vee(z)$, so the inducing characters that enter the comparison with $H$ are
\[
 \psi_\alpha=\chi^{(a)}\circ\alpha^\vee,\qquad
 \chi_s^{(a)}={}^{w_{(a)}}\chi_s,\qquad
 \psi_{\alpha,s}=\chi_s^{(a)}\circ\alpha^\vee.
\]
Equation \eqref{eq:main-first-root-exponents} identifies $\psi_\alpha$ with $\psi_+$ or $\psi_-$ for $r=r_\alpha$ and $t=t_\alpha$, while the reflected inducing character satisfies $\chi^{(a+1)}\circ\alpha^\vee=\psi_\alpha^{-1}$. If $w_{(a)}^{-1}(\alpha)\in\Phi_j(P)$, \eqref{eq:parabolic-root-twist} gives
\[
 \psi_{\alpha,s}
   =\psi_\alpha|\cdot|_{\mathbb C}^{j(s-s_0)}.
\]
Thus $\psi_{\alpha,s}$ is the family used in \Cref{lem:rk1-regularity}, with the parameter there replaced by $j(s-s_0)$. Its value at $s=s_0$ is $\psi_\alpha$.

We first compare $\operatorname T_H(\psi_{\alpha,s})$ with the standard intertwining operator between the principal series representations of $L_\alpha$,
\[
 \operatorname{T}^{L_\alpha}_\alpha(\chi_s^{(a)}):
 I_{L_\alpha}(\chi_s^{(a)})
 \longrightarrow I_{L_\alpha}({}^{s_\alpha}\chi_s^{(a)}).
\]
Pulling functions back along $\varphi_\alpha$ identifies the source and target, as $H$-representations, with $I_H(\psi_{\alpha,s})$ and $I_H(\psi_{\alpha,s}^{-1})$, respectively, because \eqref{eq:rk1-modulus} shows that pullback respects the modulus factors in normalized induction. A function in either principal series of $L_\alpha$ is determined by its pullback and its prescribed central character: this follows from $L_\alpha=Z(L_\alpha)^\circ\varphi_\alpha(H)$, with compatibility on the finite central intersection supplied by the inducing character. With the root coordinates and measure fixed in \eqref{eq:rk1-root-map}--\eqref{eq:rk1-measure}, the two intertwining integrals agree under this identification:
\[
 \bigl(\operatorname{T}^{L_\alpha}_\alpha(\chi_s^{(a)})f\bigr)
       \circ\varphi_\alpha
 =\operatorname{T}_H(\psi_{\alpha,s})(f\circ\varphi_\alpha).
\]
The identity holds initially in the region of convergence and extends to the meromorphic families by continuation. Hence \Cref{lem:rk1-regularity}, with the parameter substitution above, proves that $\operatorname{T}^{L_\alpha}_\alpha(\chi_s^{(a)})$ is holomorphic at $s_0$ and has an invertible value there.

We now pass from $L_\alpha$ to $G$ by using induction in stages \eqref{eq:parabolic-induction-in-stages} to realize
\[
 I(\chi_s^{(a)})\simeq
 \operatorname{Ind}_{P_\alpha}^G
       \bigl(I_{L_\alpha}(\chi_s^{(a)})\bigr).
\]
For $f\in I(\chi_s^{(a)})$, the corresponding function $F_f$ on $G$ has values in $I_{L_\alpha}(\chi_s^{(a)})$ and is given by
\begin{equation}\label{eq:main-first-stages-function}
 F_f(g)(l)=\delta_{P_\alpha}(l)^{-1/2}f(lg),
 \qquad g\in G,\quad l\in L_\alpha.
\end{equation}
The inverse identification evaluates the function $F_f(g)$ on $L_\alpha$ at the identity, giving $f(g)=F_f(g)(1)$. Under this identification, $\operatorname T_\alpha^G$ acts by applying $\operatorname T_\alpha^{L_\alpha}$ to each value $F_f(g)$, as the following calculation shows:
\begin{equation}\label{eq:main-stage-integral-identity}
 \begin{aligned}
 F_{\operatorname{T}_\alpha^G f}(g)(l)
 &=\delta_{P_\alpha}(l)^{-1/2}
   \int_{\mathbb C}
      f(\dot s_\alpha^{-1}x_\alpha(z)lg)\,d\mu_{\mathbb C}(z)\\
 &=\bigl(\operatorname{T}^{L_\alpha}_\alpha F_f(g)\bigr)(l).
 \end{aligned}
\end{equation}
Here both operators have inducing character $\chi_s^{(a)}$, and the second equality uses the fact that $\delta_{P_\alpha}$ is trivial on the root subgroup and on $\dot s_\alpha$. As with the preceding integral identity, the calculation in the region of convergence extends meromorphically in $s$.

In fixed compact pictures, applying the holomorphic family $\operatorname T_\alpha^{L_\alpha}(\chi_s^{(a)})$ to each value of an induced function gives a holomorphic family of operators on $G$. At $s=s_0$, applying its inverse in the same way gives the inverse of $\operatorname T_\alpha^G(\chi^{(a)})$. Therefore every factor associated with a simple reflection is holomorphic at $s_0$ with invertible value, and the finite factorization in \Cref{lem:standard-factorization} gives the same conclusions for $\operatorname{T}_{\mathrm{st}}(s)$.

\subsection{Comparison of cohomology for \texorpdfstring{$G$ and $\mathrm{SL}_2(\mathbb C)$}{G and SL2(C)}}
\label{subsec:main-first-transitivity}
We now compute the action of the intertwining operators on the cohomology classes constructed in \Cref{subsec:main-first-rank-one}, using the holomorphy proved in \Cref{lem:rk1-regularity} to specialize the families at $\psi_\pm$. Since the test cochains are obtained by composing $j_\pm$ with the same map $P_{r,t}$ from \eqref{eq:rk1-cartan-product}, the identities \eqref{eq:rk1-embedding-scalars} for the maps $j_\pm$ also determine the operator on these cochains. Expressing the result in the prescribed generators by means of $[J_+^{\mathrm{test}}]=[J_+]$ and $[J_-^{\mathrm{test}}]=(-1)^r[J_-]$ from \eqref{eq:rk1-test-to-kostant}, we obtain
\begin{equation}\label{eq:rk1-cohom-scalars}
 \operatorname{T}_H(\psi_+)^\bullet[J_+]
       =(-1)^r\frac{2\pi}{r}[J_-],\quad
 \operatorname{T}_H(\psi_-)^\bullet[J_-]
       =(-1)^t\frac{2\pi}{t}[J_+].
\end{equation}
The positive scalar factors are $c(\psi_+)=2\pi/r$ and $c(\psi_-)=2\pi/t$, as in \eqref{eq:rk1-scalar}. Division by $c(\psi_\pm)$ sends each chosen rational generator to the other with the displayed sign.

To transfer this calculation to $G$, let's fix a step $\alpha=\alpha_{a+1}$, with $0\leq a<d$, and write $P_\alpha=L_\alpha N_\alpha$ for the parabolic introduced in \Cref{subsec:main-first-regularity}. Retain $\psi_\alpha=\chi^{(a)}\circ\alpha^\vee$, and take $r=r_\alpha$ and $t=t_\alpha$ as in \eqref{eq:main-first-root-exponents}. For $b=a,a+1$, we denote the following degree-one cohomology groups of $H$ by
\[
 \mathscr{H}_\alpha(\chi^{(b)})
   =H^1(\mathfrak h,C_H;
       I_H(\chi^{(b)}\circ\alpha^\vee)\otimes \mathcal{E}_{r,t}).
\]
These cohomology groups are defined for $H=\mathrm{SL}_2(\mathbb C)$ and the inducing characters $\chi^{(b)}\circ\alpha^\vee$. For each $\mathscr{H}_{\alpha}(\chi^{(b)})$, we choose the generator $[J_{\alpha,\chi^{(b)}}]$ to be $[J_\pm]$ as in \eqref{eq:rk1-edge}. Here we take $\chi^{(b)}\circ\alpha^\vee=\psi_\pm$, where $\pm$ is determined as in \eqref{eq:rk1-cohom-data}.

Since $\chi^{(a+1)}\circ\alpha^\vee=\psi_\alpha^{-1}$, the two cases of \eqref{eq:rk1-cohom-scalars} can be written together as
\begin{equation}\label{eq:main-first-root-line-formula}
 \operatorname{T}_H(\psi_\alpha)^\bullet[J_{\alpha,\chi^{(a)}}]
  =\epsilon_{\alpha,\chi^{(a)}}
    \frac{L_{\mathbb C}(0,\psi_\alpha)}
         {L_{\mathbb C}(1,\psi_\alpha)}
       [J_{\alpha,\chi^{(a+1)}}],
 \qquad
 \epsilon_{\alpha,\chi^{(a)}}=
 \begin{cases}
 (-1)^r,&\psi_\alpha=\psi_+,\\
 (-1)^t,&\psi_\alpha=\psi_-.
 \end{cases}
\end{equation}

We now compare these generators for the cohomology of $H$ with the generators $[J_{\chi^{(a)}}]$ and $[J_{\chi^{(a+1)}}]$ for the cohomology of $G$, fixed independently in \Cref{subsec:main-first-Delorme}. The following lemma constructs comparison maps $\mathcal S_{\alpha,\chi^{(b)}}$ and proves two properties:
\begin{enumerate}
\item they carry the generators for the cohomology of $G$ to nonzero rational multiples of the generators for the cohomology of $H$;
\item and they commute with the respective intertwining operators.
\end{enumerate}
The first property compares the chosen rational structures, while the second determines the scalar of $(\operatorname T_\alpha^G)^\bullet$ from the scalar of $\operatorname T_H(\psi_\alpha)^\bullet$. We use the isomorphisms $\mathscr K_{\chi^{(b)}}^G$ in \eqref{eq:main_kostant}, writing $\chi^{(b)}=\boxtimes_j(\chi^{(b)})^j$.

\begin{lemma}[Comparison of cohomology classes for $G$ and $H$]
\label{lem:main-first-Kostant-transitivity}
The pinning and the rational forms specified in \Cref{subsec:main-first-Delorme} determine isomorphisms
\[
 \mathcal S_{\alpha,\chi^{(b)}}:
 \mathscr H_G(\chi^{(b)})\xrightarrow{\ \sim\ }
 \mathscr{H}_\alpha(\chi^{(b)}),\qquad b=a,a+1,
\]
which preserve the chosen rational structures in the sense that
\begin{equation}\label{eq:main-first-staged-rational}
 \mathcal S_{\alpha,\chi^{(b)}}[J_{\chi^{(b)}}]
   =d_{\alpha,\chi^{(b)}}[J_{\alpha,\chi^{(b)}}],
 \qquad d_{\alpha,\chi^{(b)}}\in\mathbb Q^\times,
 \quad b=a,a+1,
\end{equation}
and are compatible with the intertwining operators through
\begin{equation}\label{eq:main-first-staged-naturality}
 \mathcal S_{\alpha,\chi^{(a+1)}}(\operatorname{T}_\alpha^G)^\bullet
   =\operatorname{T}_H(\psi_\alpha)^\bullet
       \mathcal S_{\alpha,\chi^{(a)}}.
\end{equation}
Using the isomorphisms $\mathscr K_{\chi^{(b)}}^G$ from \eqref{eq:main_kostant}, these properties are expressed by the commutative diagram
\begin{equation}\label{eq:main-actual-stage-square}
\begin{tikzcd}[column sep=5em,row sep=large]
 \mathscr H_G(\chi^{(a)})
    \arrow[r,"(\operatorname{T}_\alpha^G)^\bullet"]
    \arrow[d,"\mathscr{K}_{\chi^{(a)}}^G"']
 &\mathscr H_G(\chi^{(a+1)})
    \arrow[d,"\mathscr{K}_{\chi^{(a+1)}}^G"]\\
 \bigotimes_j\mathscr H^0((\chi^{(a)})^j)
    \arrow[r,"a_{\alpha,\chi^{(a)}}c(\psi_\alpha)"]
    \arrow[d,"d_{\alpha,\chi^{(a)}}"']
 &\bigotimes_j\mathscr H^0((\chi^{(a+1)})^j)
    \arrow[d,"d_{\alpha,\chi^{(a+1)}}"]\\
 \mathscr{H}_\alpha(\chi^{(a)})
    \arrow[r,"\operatorname{T}_H(\psi_\alpha)^\bullet"']
 &\mathscr{H}_\alpha(\chi^{(a+1)}).
\end{tikzcd}
\end{equation}
Here the scalar in the middle horizontal arrow is defined by
\[
 c(\psi_\alpha)=
 \frac{L_{\mathbb C}(0,\psi_\alpha)}{L_{\mathbb C}(1,\psi_\alpha)},
 \qquad
 a_{\alpha,\chi^{(a)}}=
 \epsilon_{\alpha,\chi^{(a)}}
 \frac{d_{\alpha,\chi^{(a)}}}{d_{\alpha,\chi^{(a+1)}}}
 \in\mathbb Q^\times,
\]
and that arrow sends $\Omega_{\chi^{(a)}}$ to $a_{\alpha,\chi^{(a)}}c(\psi_\alpha)\Omega_{\chi^{(a+1)}}$.

For $b=a,a+1$, the lower vertical arrow in the corresponding column sends $\Omega_{\chi^{(b)}}$ to $d_{\alpha,\chi^{(b)}}[J_{\alpha,\chi^{(b)}}]$. Therefore its composite with $\mathscr{K}_{\chi^{(b)}}^G$ is the comparison map $\mathcal S_{\alpha,\chi^{(b)}}$.
\end{lemma}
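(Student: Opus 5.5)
We will construct $\mathcal S_{\alpha,\chi^{(b)}}$ as a composite of two restriction steps, each of which is an isomorphism on bottom-degree cohomology. The first step computes $(\mathfrak g,K)$-cohomology of $I(\chi^{(b)})\simeq\operatorname{Ind}_{P_\alpha}^G(I_{L_\alpha}(\chi^{(b)}))$ through $P_\alpha$, using the staged realization \eqref{eq:main-first-stages-function}. Evaluation at the identity as in \eqref{eq:main-first-evaluation}, with $B$ replaced by $P_\alpha$, followed by the Borel--Wallach formula \cite[III.3.3(ii)]{borel-wallach}, gives
\[
 \mathscr H_G(\chi^{(b)})\;\xrightarrow{\ \sim\ }\;H^{1}\bigl(\mathfrak l_\alpha,K_{L_\alpha};I_{L_\alpha}(\chi^{(b)})\otimes H^{q_0(G)-1}(\mathfrak n_\alpha,V_{\ulb})_{\underline w'}\bigr).
\]
Here $\underline w'$ is determined by the factorization $\underline w_{\chi^{(b)}}=\underline w_{L_\alpha}\underline w'$ of \eqref{eq:main-first-absolute-elements} into an $L_\alpha$-part of length one and a Kostant representative for $P_\alpha$. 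That the $L_\alpha$-part has length exactly one, and so matches the degree-one cohomology of $H$, follows from \eqref{eq:main-first-matching} and the sign computation \eqref{eq:main-first-root-exponents}: exactly one of the two lifts $\alpha^{(1)},\alpha^{(2)}$ is inverted. Kostant's theorem identifies the $\underline w'$-summand with the irreducible $L_\alpha$-module of highest weight $\underline w'\cdot\ulb$, spanned by the rational cocycle $\bigl(\bigwedge_{\beta\in\Phi_{\underline w'}}X_\beta^*\bigr)\otimes\dot{\underline w}'(v_{\ulb})$. The second step pulls back along $\varphi_\alpha$. Its derived part restricts to $\mathcal E_{r,t}$ with $r=r_\alpha,t=t_\alpha$, because $\langle\underline w'\cdot\ulb+\underline\rho,\alpha^{(i)\vee}\rangle$ recovers $r$ and $t$. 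Since the central character cancels, restriction to $\varphi_\alpha(H)$ gives $\mathscr H_\alpha(\chi^{(b)})$, as in the argument of \Cref{subsec:main-first-regularity} that $L_\alpha=Z(L_\alpha)^\circ\varphi_\alpha(H)$. This defines $\mathcal S_{\alpha,\chi^{(b)}}$.

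Property \eqref{eq:main-first-staged-naturality} will follow directly from \eqref{eq:main-stage-integral-identity}. On the staged realization, $\operatorname T^G_\alpha$ acts by applying $\operatorname T^{L_\alpha}_\alpha$ valuewise. This operation commutes with evaluation at $1\in G$ and with the $\mathfrak n_\alpha$-cohomology step, because it is $G$-equivariant and acts only on the $L_\alpha$-variable. The intertwining integral then matches $\operatorname T_H(\psi_\alpha)$ under pullback along $\varphi_\alpha$, as established in \Cref{subsec:main-first-regularity}. Holomorphy at $s_0$ (\Cref{lem:rk1-regularity}) lets us specialize these identities of meromorphic families. The coefficient modules for $b=a$ and $b=a+1$ coincide, since $s_\alpha$ only moves the $L_\alpha$-part of $\underline w_{\chi^{(b)}}$ and leaves $\underline w'$ unchanged. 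Hence the same map is used on both sides.

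The main obstacle is the rationality statement \eqref{eq:main-first-staged-rational}. We will trace $J_{\chi^{(b)}}$ through both steps. By transitivity of evaluation at $1$, first along $P_\alpha$ and then along $B\cap L_\alpha$, together with the transitivity of Kostant's theorem, the cocycle $h^{\rm coch}_{\chi^{(b)}}$ of \eqref{eq:main-first-Kostant} factors as a wedge of the $\mathfrak n_\alpha$-covectors with the single root covector $e_1^\vee$ or $e_2^\vee$, tensored with $\dot{\underline w}_{\chi^{(b)}}v_{\ulb}$. We must then show that $\dot{\underline w}_{\chi^{(b)}}v_{\ulb}$ and $\dot{\underline w}'v_{\ulb}$ differ by $\dot s_\alpha$ (in one component). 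This element acts through the pinning by a rational matrix on $\mathcal V_{\ulb,\mathbb Q}$, so within the $L_\alpha$-module it carries the highest-weight vector of $\mathcal E_{r,t}$ to a nonzero rational multiple of $A_{r-1}\otimes B_0$ or $A_0\otimes B_{t-1}$, which are the vectors appearing in $h_\pm$ of \eqref{eq:rk1-kostant}. The reordering of the wedge contributes only a sign. Together with the normalization $1\otimes[h_\pm]\mapsto[J_\pm]$ of \eqref{eq:rk1-edge}, this gives $d_{\alpha,\chi^{(b)}}\in\mathbb Q^\times$.

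The delicate checks are that pinned Weyl representatives map extremal lattice vectors to nonzero rational multiples of extremal vectors, which holds because they are exponentials of integral Chevalley operators, and that the conjugate second factor, with $X_{\alpha^{(2)}}=(0,\overline{X_\alpha})$, matches $e_2$ rationally under $\varphi_\alpha$. The commutative diagram \eqref{eq:main-actual-stage-square} then follows by combining \eqref{eq:main-first-root-line-formula} with the two established properties, with $a_{\alpha,\chi^{(a)}}$ as defined.
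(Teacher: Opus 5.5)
Your proposal is correct and follows essentially the same route as the paper. You compute the cohomology in stages through $P_\alpha$, and you factor $\underline w_{\chi^{(b)}}$ into a length-one $L_\alpha$-part times a common Kostant representative, so that the coefficient module is shared for $b=a,a+1$. Naturality then comes from the valuewise identity \eqref{eq:main-stage-integral-identity}, and rationality from writing $h^{\rm coch}_{\chi^{(b)}}$ as a single $L_\alpha$-root covector wedged with $\dot{\ulw}_\alpha(h^{\rm coch}_{\ulw^{(\alpha)}})$.

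There are three small points to sharpen, all handled explicitly in the paper. First, $\dot s_\alpha$ acts on the $\n^{(\alpha)}$-covectors as well as on the coefficient vector, since those covectors are indexed by $\ulw_\alpha(\Phi_{\ulw^{(\alpha)}})$ rather than $\Phi_{\ulw^{(\alpha)}}$. Second, your displayed coefficient module should carry the twist $\delta_{P_\alpha}^{1/2}$. Third, to separate the degree-one $H$-cohomology from the central factor of $L_\alpha$, you need that $H^0$ vanishes on $H$.
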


\begin{proof}
We construct the maps by computing $\mathscr H_G(\chi^{(a)})$ and $\mathscr H_G(\chi^{(a+1)})$ in stages, first taking cohomology along the unipotent radical $N_\alpha$ of $P_\alpha$ and then computing relative Lie algebra cohomology for $L_\alpha$. Factoring the contributing Weyl element into an element of $W_{\rm abs}(L_\alpha)$ and a minimal coset representative identifies the coefficient representation after the first stage and explains why it is the same for both inducing characters.

Let $\mathfrak l_\alpha$ and $\n^{(\alpha)}$ be the complexified real Lie algebras of $L_\alpha$ and $N_\alpha$, respectively, and put $K_{L_\alpha}^{G}:=K\cap L_\alpha$. Writing $B_\alpha=B\cap L_\alpha$ and denoting its complexified nilradical by $\mathfrak n_\alpha$, we have the decomposition
\[
 \mathfrak n=\mathfrak n_{\alpha}\ltimes\n^{(\alpha)}.
\]
For this fixed step, choose $\ulw^{(\alpha)}$ to be the minimal-length representative of $W_{\rm abs}(L_\alpha)\underline w_{\chi^{(a)}}$, and let $\ulw_\alpha\in W_{\rm abs}(L_\alpha)$ be the remaining factor. The factorization \cite[III.1.4]{borel-wallach}, with the convention \eqref{eq:weyl_P}, then takes the form
\begin{equation}\label{eq:main-first-Kostant-factorization}
 \begin{gathered}
 \underline w_{\chi^{(a)}}=\ulw_{\alpha}\ulw^{(\alpha)},\qquad
 \ulw_{\alpha}\in W_{\rm abs}(L_\alpha),\quad
 \ulw^{(\alpha)}\in W_{\rm abs}^{P_\alpha},\\
 \ell(\underline w_{\chi^{(a)}})
    =\ell(\ulw_{\alpha})+\ell(\ulw^{(\alpha)}).
 \end{gathered}
\end{equation}
The signs of the exponents of $\psi_\alpha$ determine whether $\ulw_\alpha=(s_\alpha,1)$ or $(1,s_\alpha)$, according as $\psi_\alpha=\psi_+$ or $\psi_-$. 
In either case its length is one, so the equality $\ell(\underline w_{\chi^{(a)}})=q_0(G)$ in \eqref{eq:main-first-matching} implies
\[
 \ell(\ulw_{\alpha})=1,\qquad
 \ell(\ulw^{(\alpha)})=q_0(G)-1.
\]
By \eqref{eq:main-first-absolute-elements}, the Weyl elements for $\chi^{(a)}$ and $\chi^{(a+1)}$ satisfy
\[
 \underline w_{\chi^{(a+1)}}
 \ = \ \jmath(s_\alpha)\underline w_{\chi^{(a)}} 
 \ = \ \bigl(\jmath(s_\alpha)\ulw_{\alpha}\bigr)\ulw^{(\alpha)}.
\]
Since both $\jmath(s_\alpha)$ and $\ulw_\alpha$ belong to $W_{\rm abs}(L_\alpha)$, the factor in this subgroup becomes $\ulw'_\alpha=\jmath(s_\alpha)\ulw_\alpha$, while $\ulw^{(\alpha)}$ remains the minimal representative of the same coset. Thus the factor indexing the cohomology of $\n^{(\alpha)}$ is unchanged when $\chi^{(a)}$ is replaced by $\chi^{(a+1)}$.

For both $\chi^{(a)}$ and $\chi^{(a+1)}$, the contributing summand is therefore
\[
 F_{\ulw^{(\alpha)}}\subset
 H^{q_0(G)-1}(\n^{(\alpha)},\mathcal V_{\ulb}),
\]
whose highest-weight vector $[h_{\ulw^{(\alpha)}}^{\rm coch}]$ is normalized by \eqref{eq:main-first-Kostant}. Under normalized induction from $P_\alpha$, the coefficient representation in the remaining cohomology of $(\mathfrak l_\alpha,K_{L_\alpha}^{G})$ includes the factor $\delta_{P_\alpha}^{1/2}$:
\begin{equation}\label{eq:main-first-inner-coefficient}
 F_\alpha=\mathbb C_{\delta_{P_\alpha}^{1/2}}
                     \otimes F_{\ulw^{(\alpha)}}.
\end{equation}
Equation~\eqref{eq:main-first-matching} says that the weight $\underline w_{\chi^{(a)}}\cdot\underline\lambda$ cancels $d(\delta_B^{1/2}\chi^{(a)})$. Regularity of $\underline\lambda+\underline\rho_G$ makes this the unique weight in Kostant's decomposition of $H^\bullet(\mathfrak n,V_{\ulb})$ with that property; all other summands have zero relative torus cohomology after tensoring with $\mathbb C_{\delta_B^{1/2}\chi^{(a)}}$. The factorization \eqref{eq:main-first-Kostant-factorization} therefore identifies $F_{\ulw^{(\alpha)}}$ as the contributing summand when cohomology is computed in stages. Applying \cite[III.3.3(ii)]{borel-wallach} to this summand gives
\begin{equation}\label{eq:main-stage-map-composition}
 \mathscr H_G(\chi^{(a)})\xrightarrow{\ \sim\ }
 H^1\!\left(\mathfrak l_\alpha,K_{L_\alpha}^{G};
     I_{L_\alpha}(\chi^{(a)})\otimes F_\alpha\right).
\end{equation}
Here the remaining degree is one because the nilradical cohomology contributing $F_{\ulw^{(\alpha)}}$ already accounts for $q_0(G)-1$ degrees.

To identify the pullback of $F_\alpha$ along $\varphi_\alpha$ with $\mathcal E_{r,t}$, restrict \eqref{eq:main-first-matching} along $\alpha^\vee$. The two highest weights of the pullback of $F_\alpha$ are $r-1$ and $t-1$, since $\delta_{P_\alpha}^{1/2}$ is trivial on $H$, so the pullback is isomorphic to $\mathcal E_{r,t}$ from \eqref{eq:rk1-coefficient}. We normalize the equivariant identification by requiring that it send $A_0\otimes B_0$ to $1\otimes[h_{\ulw^{(\alpha)}}^{\rm coch}]$. The lowering operators of \eqref{eq:rk1-lowering-operators} map under the complexified differential of $\varphi_\alpha$ to $X_{-\alpha^{(1)}}$ and $X_{-\alpha^{(2)}}$, respectively, so equivariance and \eqref{eq:rk1-lowering-monomials} specify this identification on every basis vector:
\begin{equation}\label{eq:main-first-coefficient-identification}
 A_j\otimes B_k\longmapsto
 \frac{(r-1-j)!\,(t-1-k)!}{(r-1)!\,(t-1)!}
 X_{-\alpha^{(1)}}^jX_{-\alpha^{(2)}}^k
 \bigl(1\otimes[h_{\ulw^{(\alpha)}}^{\rm coch}]\bigr).
\end{equation}
Here the root vectors act through the action of $L_\alpha$ on $H^{q_0(G)-1}(\n^{(\alpha)},\mathcal V_{\ulb})$. Since the coefficient representation $F_\alpha$ is the same for $\chi^{(a)}$ and $\chi^{(a+1)}$, we use this identification in both cases.

Equation~\eqref{eq:main-first-matching} also ensures that the central characters of $I_{L_\alpha}(\chi^{(a)})$ and $F_\alpha$ cancel on their tensor product. Since $I_H(\psi_\alpha)$ is irreducible by \Cref{lem:rk1-regularity} and infinite-dimensional, its tensor product with the finite-dimensional coefficient $\mathcal E_{r,t}$ has no degree-zero cohomology on $H$. Consequently, in the total degree-one cohomology of \eqref{eq:main-stage-map-composition}, the cohomology contributed by the center of $L_\alpha$ must be in degree zero. Using its canonical generator $1$, pullback along $\varphi_\alpha$ identifies the right-hand side of \eqref{eq:main-stage-map-composition} with $\mathscr{H}_\alpha(\chi^{(a)})$. Note that any finite kernel of $\varphi_\alpha$ acts trivially because both the representation $I_{L_\alpha}(\chi^{(a)})$ and the coefficient representation $F_\alpha$ are pulled back from $L_\alpha$. 

Composing these identifications defines $\mathcal S_{\alpha,\chi^{(a)}}$, and $\mathcal S_{\alpha,\chi^{(a+1)}}$ is constructed in the same way using $\ulw'_\alpha$, with the common coefficient module and its identification with $\mathcal E_{r,t}$ held fixed.

We next check how these maps act on the prescribed rational generators by tracing the Kostant cocycle $h_{\chi^{(a)}}^{\rm coch}$ through the construction.

The rational root spaces and coefficient representation in \eqref{eq:main-rational-nilradical-complex} define the nilradical complexes over $\mathbb Q$, and applying the ordered operators for the negative roots of $L_\alpha$ to $[h_{\ulw^{(\alpha)}}^{\rm coch}]$ constructs the rational form of $F_{\ulw^{(\alpha)}}$. The same construction for the other Kostant summands makes their decomposition and projections rational, while the chosen highest-weight normalization ensures that the identification with $\mathcal E_{r,t}$ is also rational.

The length-additive factorization \eqref{eq:main-first-Kostant-factorization} implies the decomposition:
\begin{equation}\label{eq:main-first-inversion-wedge}
 \Phi_{\ulw_{\alpha}\ulw^{(\alpha)}}
   =\Phi_{\ulw_{\alpha}}\sqcup\ulw_{\alpha}(\Phi_{\ulw^{(\alpha)}}).
\end{equation}
Since $\ell(\ulw_\alpha)=1$, the first set is the singleton $\{\alpha^{(i)}\}$, where $i=1$ if $\ulw_\alpha=(s_\alpha,1)$ and $i=2$ if $\ulw_\alpha=(1,s_\alpha)$. Its contribution to the exterior factor is therefore the single covector $X_{\alpha^{(i)}}^*$, while the roots in the second set belong to $\n^{(\alpha)}$. The pinned representative $\dot{\ulw}_\alpha$ in the complexification of $L_\alpha$ preserves $\n^{(\alpha)}$ and acts on its cochains through both the coefficient representation and the dual adjoint action on the exterior factors. This action preserves the rational forms fixed in \eqref{eq:main-rational-nilradical-complex}, so, with the argument in $\n_\alpha$ placed first, the cocycle $h_{\chi^{(a)}}^{\rm coch}$ on $\mathfrak n$ is a nonzero rational multiple of
\begin{equation}\label{eq:main-first-staged-cocycle}
 X_{\alpha^{(i)}}^*
       \wedge\dot{\ulw}_\alpha(h_{\ulw^{(\alpha)}}^{\rm coch}).
\end{equation}
Here the exterior product combines this degree-one covector with the degree-$(q_0(G)-1)$ cocycle on $\n^{(\alpha)}$, so the result is a degree-$q_0(G)$ cochain on $\mathfrak n$. The nilradical part of the comparison map takes the component
\begin{equation}\label{eq:main-stage-cochain-component}
 Y\longmapsto
 \left[
 (Z_1,\ldots,Z_{q_0(G)-1})\longmapsto
 h_{\chi^{(a)}}^{\rm coch}(Y,Z_1,\ldots,Z_{q_0(G)-1})
 \right],
 \quad Y\in\n_{\alpha},\quad Z_i\in\n^{(\alpha)}.
\end{equation}
For each $Y$, the cochain inside the brackets is a scalar multiple of $\dot{\ulw}_\alpha(h_{\ulw^{(\alpha)}}^{\rm coch})$, which is closed because the action of $L_\alpha$ commutes with the $\n^{(\alpha)}$-differential. Taking its cohomology class while retaining $Y$ as the argument therefore produces a degree-one cochain on $\n_\alpha$ with values in $F_{\ulw^{(\alpha)}}$. Under the identification \eqref{eq:main-first-coefficient-identification}, the class $1\otimes\dot{\ulw}_\alpha[h_{\ulw^{(\alpha)}}^{\rm coch}]$ corresponds, up to a nonzero rational scalar, to $A_{r-1}\otimes B_0$ when $\ulw_\alpha=(s_\alpha,1)$, and to $A_0\otimes B_{t-1}$ when $\ulw_\alpha=(1,s_\alpha)$.

The remaining covector pulls back to $e_1^\vee$ or $e_2^\vee$ under the complexified differential of $\varphi_\alpha$. Consequently, the pullback of the resulting cocycle is a nonzero rational multiple of $h_+$ or $h_-$ from \eqref{eq:rk1-kostant}.

To compare this construction with $\mathscr K_{\chi^{(a)}}^G$ in \eqref{eq:main_kostant}, use the induction-in-stages identification \eqref{eq:main-first-stages-function}: evaluating first at $g=1$ and then at $l=1$ gives $F_f(1)(1)=f(1)$. The map on nilradical cohomology is described in \eqref{eq:main-stage-cochain-component}, and the normalization factors satisfy
\[
 \delta_B^{1/2}|_T
   =\bigl(\delta_{B_\alpha}^{1/2}|_T\bigr)
      \bigl(\delta_{P_\alpha}^{1/2}|_T\bigr).
\]
Both computations use the vector $1$ in degree-zero cohomology for the center of $L_\alpha$. Their normalizations therefore differ by the nonzero rational factors identified above. This proves \eqref{eq:main-first-staged-rational} for $\chi^{(a)}$, and the same argument with $\ulw'_\alpha$ proves it for $\chi^{(a+1)}$.

It remains to check that the comparison maps commute with the intertwining operators. Under induction in stages through $P_\alpha$, evaluation at $g=1$ takes values in $I_{L_\alpha}(\chi^{(a)})$, and \eqref{eq:main-stage-integral-identity} gives
\[
 F_{\operatorname T_\alpha^G f}(1)
   =\operatorname T_\alpha^{L_\alpha}\bigl(F_f(1)\bigr).
\]
Thus evaluation at $g=1$ intertwines $\operatorname T_\alpha^G$ with $\operatorname T_\alpha^{L_\alpha}$, and passage to the cohomology of $\n^{(\alpha)}$ has the same compatibility. The two inducing characters give the same summand $F_{\ulw^{(\alpha)}}$ and the same identification of the pullback of $F_\alpha$ along $\varphi_\alpha$ with $\mathcal E_{r,t}$ in \eqref{eq:main-first-coefficient-identification}. After taking degree-zero cohomology for the center of $L_\alpha$ and pulling back along $\varphi_\alpha$, the operator $\operatorname T_\alpha^{L_\alpha}$ becomes $\operatorname T_H(\psi_\alpha)$. These compatibilities establish \eqref{eq:main-first-staged-naturality}.

Applying \eqref{eq:main-first-staged-naturality} to $[J_{\chi^{(a)}}]$ gives the scalar in the diagram: $\mathcal S_{\alpha,\chi^{(a)}}$ contributes $d_{\alpha,\chi^{(a)}}$, the operator $\operatorname T_H(\psi_\alpha)^\bullet$ contributes $\epsilon_{\alpha,\chi^{(a)}}c(\psi_\alpha)$, and applying $\mathcal S_{\alpha,\chi^{(a+1)}}^{-1}$ divides by $d_{\alpha,\chi^{(a+1)}}$. Hence $(\operatorname T_\alpha^G)^\bullet$ sends $[J_{\chi^{(a)}}]$ to $a_{\alpha,\chi^{(a)}}c(\psi_\alpha)[J_{\chi^{(a+1)}}]$, as in the middle horizontal arrow of \eqref{eq:main-actual-stage-square}.
\end{proof}

\subsection{Completion of the proof of \texorpdfstring{\Cref{thm:mainthm}}{the main theorem}}
\label{subsec:main-first-finish}
\begin{proof}[Proof of \Cref{thm:mainthm}]
The diagram \eqref{eq:main-actual-stage-square} implies that, for $\alpha=\alpha_{a+1}$ and $0\leq a<d_P$,
\begin{equation}
\label{eq:main-rank-one-input}
 (\operatorname{T}_\alpha^G)^\bullet[J_{\chi^{(a)}}]
   =a_{\alpha,\chi^{(a)}}
     \frac{L_{\mathbb C}(0,\chi^{(a)}\circ\alpha^\vee)}
          {L_{\mathbb C}(1,\chi^{(a)}\circ\alpha^\vee)}
                 [J_{\chi^{(a+1)}}],
 \qquad
 a_{\alpha,\chi^{(a)}}\in\mathbb Q^\times.
\end{equation}
Compose along the reduced expression in \Cref{subsec:main-first-intermediates}. \Cref{lem:standard-factorization} gives
\[
 \operatorname{T}_{\mathrm{st}}(s_0)
    =\operatorname{T}_{\alpha_{d_P}}^G(\chi^{(d_P-1)})\circ\cdots\circ
                              \operatorname{T}_{\alpha_1}^G(\chi^{(0)}).
\]
\Cref{subsec:main-first-regularity} proves that every factor is holomorphic and invertible at the evaluation point. Hence their composition $\operatorname{T}_{\mathrm{st}}(s_0)$ is an isomorphism. Tensoring with $V_{\ulb}$ gives an isomorphism of the relative cochain complexes and hence of the cohomology groups in every degree.

In bottom degree $q_0(G)$, the source and target are generated by $[J_\chi]$ and $[J_{\widetilde\chi}]$, with the rational structures \eqref{eq:main-prescribed-rational-line} for $\chi$ and $\widetilde\chi$, respectively. The coefficient of $\operatorname{T}_{\mathrm{st}}^\bullet(s_0)$ between these generators is the product of the scalars in \eqref{eq:main-rank-one-input}. Put $\beta_{(a)}=w_{(a-1)}^{-1}(\alpha_a)$ for $1\leq a\leq d_P$. These roots enumerate $\Phi(P)$ once. By the Weyl action of \Cref{sec:inter_op} and the definition $\chi^{(a-1)}={}^{w_{(a-1)}}\chi$,
\[
 \chi^{(a-1)}\circ\alpha_a^\vee
   =({}^{w_{(a-1)}}\chi)\circ\alpha_a^\vee
   =\chi\circ\beta_{(a)}^\vee.
\]
Thus each step uses the fixed character $\chi$ restricted along $\beta_{(a)}^\vee$, and the product becomes
\begin{equation}
\label{eq:main-L-product}
 \prod_{a=1}^{d_P}
 \frac{L_{\mathbb C}(0,\chi^{(a-1)}\circ\alpha_a^\vee)}
      {L_{\mathbb C}(1,\chi^{(a-1)}\circ\alpha_a^\vee)}
 =\prod_{\beta\in\Phi(P)}
   \frac{L_{\mathbb C}(0,\chi\circ\beta^\vee)}
        {L_{\mathbb C}(1,\chi\circ\beta^\vee)}.
\end{equation}

To group \eqref{eq:main-L-product} according to the subsets $\Phi_j(P)$ of $\Phi(P)$, recall $\chi=\chi_{\ulm}\cdot(\delta_P^{-1/2}|_T)$ from \Cref{subsec:mt3-main}. For $\beta\in\Phi_j(P)$, equation~\eqref{eq:parabolic-root-twist} therefore gives
\[
 \chi\circ\beta^\vee
   =(\chi_{\ulm}\circ\beta^\vee)
       |\cdot|_{\mathbb C}^{js_0}.
\]
The twist $|\cdot|_{\mathbb C}^{js_0}$ shifts the $L$-function argument by $js_0$. Using the product formula \eqref{eq:L_j} and the shift of the argument in \eqref{eq:induced-L-factor-shift}, we obtain
\begin{equation}
\label{eq:main-first-factor}
 \prod_{\beta\in\Phi(P)}
       \frac{L_{\mathbb C}(0,\chi\circ\beta^\vee)}
            {L_{\mathbb C}(1,\chi\circ\beta^\vee)}
 =\prod_{j=1}^m\frac{L_{\mathbb C}(js_0,\tau,r_j)}
                   {L_{\mathbb C}(1+js_0,\tau,r_j)}.
\end{equation}
Each $a_{\alpha_a,\chi^{(a-1)}}$ is nonzero and rational by \eqref{eq:main-rank-one-input}. Their product gives the constant $a\in\mathbb Q^\times$ in \eqref{eq:main-arithmetic-scalar}.
\end{proof}

\bigskip

\noindent
{\it Acknowledgements:} A.R. is grateful to Yiannis Sakellaridis and to Johns Hopkins University for their warm hospitality during several visits, 
when this work was initiated. During the course of the collaboration, X.Z. gratefully acknowledges the hospitality of Fordham University, 
and both authors thank the Institute for Advanced Study for providing an excellent working environment in which they met frequently. 
A.R. was partially supported by a Simons Fellowship for 2026--2027, during which this article was finalized.

\bigskip

\noindent
{\it Declaration on use of AI:} ChatGPT was used solely to proofread the manuscript at the final stage. 
No other use of AI was made. The authors are solely responsible for the content, results, and conclusions of this manuscript.

\end{document}